\newtheorem{thm}{Theorem}[section]
\newtheorem{lem}[thm]{Lemma}
\newtheorem{conj}[thm]{Conjecture}
\theoremstyle{definition}
\newtheorem{definition}[thm]{Definition}
\theoremstyle{remark}
\newtheorem{remark}[thm]{Remark}
\numberwithin{equation}{section}
\newcommand{\bN}{\mathbb{N}}
\newcommand{\bQ}{\mathbb{Q}}
\newcommand{\bR}{\mathbb{R}}
\newcommand{\bZ}{\mathbb{Z}}
\newcommand\cC{\mathcal{C}}
\newcommand\cE{\mathcal{E}}
\newcommand\cF{{\mathcal{F}}}
\newcommand\cO{{\mathcal{O}}}
\newcommand\cP{\mathcal{P}}
\newcommand\cS{{\mathcal{S}}}
\newcommand\cSS{{\mathcal{S}}{\mathcal{S}}}
\newcommand{\rounddown}[1]{\lfloor{#1}\rfloor}
\newcommand{\ff}{{f^{\prime}}}
\newcommand{\BB}{{B^{\prime}}}
\newcommand{\DD}{{D^{\prime}}}
\newcommand{\MM}{{M^{\prime}}}
\newcommand{\UU}{{U^{\prime}}}
\newcommand{\XX}{{X^{\prime}}}
\newcommand{\YY}{{Y^{\prime}}}
\newcommand{\ZZ}{{Z^{\prime}}}
\newcommand{\ZZZ}{Z^{\prime \prime}}
\newcommand{\ZZZZ}{Z^{\prime \prime \prime}}
\newcommand{\BBi}{{B^{\prime}_i}}
\newcommand{\MMi}{{M^{\prime}_i}}
\newcommand{\oA}{{\overline{A}}}
\newcommand{\oB}{{\overline{B}}}
\newcommand{\oM}{{\overline{M}}}
\newcommand{\oN}{{\overline{N}}}
\newcommand{\oV}{{\overline{V}}}
\newcommand{\oX}{{\overline{X}}}
\newcommand{\oSigma}{{\overline{\Sigma}}}
\newcommand{\oGamma}{{\overline{\Gamma}}}
\newcommand{\oDelta}{{\overline{\Delta}}}
\newcommand{\mult}{\operatorname{mult}}
\newcommand{\Nklt}{\operatorname{Nklt}}
\newcommand{\Supp}{\operatorname{Supp}}
\newcommand{\vol}{\operatorname{vol}}
\newcommand{\Ivol}{\operatorname{Ivol}}
\newcommand{\Div}{\operatorname{Div}}
\newcommand{\Nlc}{\operatorname{Nlc}}
\newcommand{\red}{\operatorname{red}}
\begin{document}
\title{BOUNDEDNESS OF STABLE MINIMAL MODELS WITH KLT SINGULARITIES}
\date{\today}

\author{MINZHE ZHU}
\address{MINZHE ZHU, School of Mathematical Sciences, Fudan University, Shanghai, 200433, China}
\email{zhumz20@fudan.edu.cn}

\begin{abstract}
We investigate the singularities and boundedness of a special kind of algebraic varieties so-called stable minimal models, which are constructed and studied  by Birkar in \cite{BVGP,moduliofalgebraicvarieties}. Given a klt stable minimal model with bounded relative volume, if we fix the dimension, Iitaka volume, and a DCC set controlling coefficients, then we show that the singularities of the klt stable minimal model can be controlled uniformly. Furthermore, we prove that with certain bounded data, stable minimal models with klt singularities form a bounded family. 
\end{abstract}

\keywords{stable minimal model, boundedness, Calabi-Yau fibration}
\subjclass[2020]{14J10,14J17,14J40}
\maketitle
\pagestyle{myheadings} \markboth{\hfill M.Z.~Zhu
	\hfill}{\hfill Boundedness of stable minimal models with klt singularities\hfill}

\tableofcontents

\section{Introduction}
We work over an algebraically closed field $k$ of characteristic zero.
~\\

Boundedness properties of algebraic varieties and singularities have been extensively studied in recent years. Regarding algebraic varieties, such as canonically polarized varieties, polarized Calabi-Yau varieties, and Fano varieties, proving boundedness properties is the first step of constructing their potential moduli spaces, see \cite{ACCLCT,HX15,HMX18,BABI,BABII,BVGP,moduliofalgebraicvarieties,GOPV}, etc. Moreover, they are also essential tools to prove effective birationalities and effective Iitaka fibrations of algebraic varieties in higher dimensional birational geometry, see \cite{ACCLCT, effectiveIitaka, BABI,GOPV,CHL23}, etc. As for singularities, Xu and Zhuang proved boundedness of K-semistable log Fano cone singularities recently, and it is a crucial step to prove discreteness of local volumes of klt singularities, see \cite{HLQ23,boundednessofsingularitiesII,boundednessofsingularitiesIII,boundednessofsingularitiesI}, etc.

Since the boundedness results of canonically polarized varieties and polarized Calabi-Yau varieties have been established, it remains to investigate algebraic varieties with intermediate Kodaira dimension. Recent work related to this aspect can be found in \cite{filipazziSvaldi20,FHS2021boundedness,BVGP,moduliofalgebraicvarieties,jiao2022boundedness,HH23boundedness,jiang23boundedness,filipazzi2024boundedness,Iitakavolume}, etc. In the absence of a natural polarization, unlike canonically polarized varieties, we need to add a polarization. A suitable option is given by Birkar. He defines stable minimal models to be such varieties plus polarization, which generalizes both KSBA-stable varieties and polarized Calabi-Yau varieties. The main purpose of this article is to study boundedness properties of stable minimal models with klt singularities. Here we recall the definition of klt stable minimal models. Note that in general stable minimal models can be defined in slc case. 

\begin{definition}\label{stable minimal models}{(Klt stable minimal models, \cite{moduliofalgebraicvarieties})}
	A \textit{klt stable minimal model} $(X,B), A$ consists of $(X,B)$ and an $\bR$-divisor $A\geq0$ where
	\begin{itemize}
		\item $(X,B)$ is a projective klt pair,
		\item $K_X+B$ is semi-ample defining a contraction $f: X\to Z$, and
		\item $K_X+B+tA$ is ample for some $t>0$, i.e. $A$ is ample over $Z$.
	\end{itemize}
	
	Moreover, if in addition $K_X+B+A$ is ample, then we call $(X,B), A$ a \textit{klt strongly  stable minimal model}.
\end{definition}

For the motivation and examples of stable minimal models, we recommend readers to \cite{moduliofalgebraicvarieties,familiesofgeneraltype} and the references therein.

Next we give the definition of families of stable minimal models with certain data.

\begin{definition}{(Families of klt stable minimal models, \cite{moduliofalgebraicvarieties})}
	Let $d\in \bN$, $u, v\in \bR^{>0}$, and $\Phi\subset \bR^{\geq0}$.
	\begin{enumerate}[itemsep=13pt]
		\item A \textit{$(d, \Phi)$-klt stable minimal model} is a klt stable minimal model $(X,B), A$ such that
		\begin{itemize}
			\item $\dim X=d$, and
			\item $B, A\in \Phi$, which means that the coefficients of $B$ and $A$ are in $\Phi$.
		\end{itemize}
		
		Let $\cS_{klt}(d,\Phi)$ denote the set of all $(d,\Phi)$-klt stable minimal models. Denote by $\cSS_{klt}(d,\Phi)\subset \cS_{klt}(d,\Phi)$ the subset of all klt strongly stable minimal models.
		\item A \textit{$(d, \Phi, \leq u, v)$-klt stable minimal model} is a $(d, \Phi)$-klt stable minimal model $(X,B),A$ such that
		\begin{itemize}
			\item $\vol(A|_F)\leq u$, where $F$ is a general fiber of $f: X\to Z$, and
			\item $\Ivol(K_X+B)=v$ (see Definition \ref{Iitaka volume} for the definition of Iitaka volumes of $\bR$-divisors).
		\end{itemize}
		
		Let $\cS_{klt}(d,\Phi,\leq u, v)$ consist of all $(d, \Phi, \leq u,v)$-klt stable minimal models. Similarly define \textit{$(d, \Phi, \leq u, \leq v)$-klt stable minimal model} and $\cS_{klt}(d,\Phi,\leq u, \leq v)$ by replacing the condition ``$\Ivol(K_X+B)=v$'' with ``$\Ivol(K_X+B)\leq v$''.

		\item When $0$ is not an accumulation point of $\Phi$ (e.g. when $\Phi$ is DCC), we say that a subset $\cE\subseteq \cS_{klt}(d,\Phi)$ forms a bounded family if there is a positive integer $r$ such that for each $(X,B),A\in \cE$, there is a very ample divisor $H$ on $X$ such that $H^d\leq r$ and $(K_X+B+A)\cdot H^{d-1}\leq r$.
	\end{enumerate}
\end{definition}

Given a klt stable minimal model $(X,B),A\to Z$ in the family $\cS_{klt}(d,\Phi,\leq u,  v)$ with fixed $d,\Phi,u,v$, we can prove that the base $Z$ is in a bounded family by Theorem \ref{effective adjunction formula} and \cite[Theorem 1.4]{BVGP}, and the general fiber $F$ is also in a bounded family by \cite[Theorem 6.2]{GOPV}. 
It is natural to ask whether or not the total space $(X,B)$ belongs to a bounded family.
\begin{conj}\label{conj1}
	Let $d\in \bN$, $u,v\in \bR^{>0}$, and $\Phi\subset \bR^{\geq0}$ be a DCC set. If $(X,B),A\in \cS_{klt}(d,\Phi,\leq u, v)$, then the set of such $(X,B)$ forms a bounded family.
\end{conj}

Jiao proved that if $u,v\in \bQ^{>0}$, and $\Phi\subset \bQ^{\geq 0}$, then $\cS_{klt}(d,\Phi, u,  v)$ is a birationally bounded family (cf. \cite[Theorem 1.2]{jiao2022boundedness}). Note that $\cS_{klt}(d,\Phi, u,  v)$ is defined similarly to $\cS_{klt}(d,\Phi,\leq u, v)$ by replacing the condition ``$\vol(A|_F)\leq u$'' with ``$\vol(A|_F)= u$''. Moreover, in \cite[Theorem 1.4]{HH23boundedness}, Hashizume and Hattori proved the $\bQ$-coefficient version of Conjecture \ref{conj1}, in the case that $\vol(A|_F)$ is fixed, and the Iitaka dimension of stable minimal models is one, that is, $(X,B),A\to Z\in \cS_{klt}(d,\Phi, u,  v)$ with $\dim Z=1$.

In \cite[Theorem 1.9]{BVGP}, Birkar proved $\bQ$-coefficient version of Conjecture  \ref{conj1} for klt strongly stable minimal models $(X,B),A$ with additional conditions that $\vol(A|_F)$ is fixed, and $\vol(K_X+B+A)$ is bounded from above (note that the notation ``$\cS_{klt}(d,\Phi,u,v,<w)$'' in \cite[Theorem 1.9]{BVGP} represents the family of klt strongly stable minimal models defined in \cite{moduliofalgebraicvarieties} and this article). Therefore, we can expect that \cite[Theorem 1.9]{BVGP} also holds for klt stable minimal models. The following conjecture is a weak version of Conjecture \ref{conj1}.

\begin{conj}\label{conj2}
	Let $d\in \bN$, $u,v,w\in \bR^{>0}$, and $\Phi\subset \bR^{\geq0}$ be a DCC set. Let $(X,B),A\in \cS_{klt}(d,\Phi,\leq u, v)$ satisfy that $\vol(K_X+B+A)<w$. Then the set of such $(X,B),A$ forms a bounded family.
\end{conj}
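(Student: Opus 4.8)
\emph{Setup and reduction to $\epsilon$-lc pairs.} The plan is to obtain Conjecture \ref{conj2} as a consequence of the uniform control of singularities established earlier in the paper, together with the $\epsilon$-lc boundedness criterion of Hacon, M\textsuperscript{c}Kernan and Xu. Fix $(X,B),A\in\cS_{klt}(d,\Phi,\leq u,v)$ with $\vol(K_X+B+A)<w$, and let $f\colon X\to Z$ be the contraction defined by the semi-ample divisor $K_X+B$; by the canonical bundle formula (Theorem \ref{effective adjunction formula}) we may write $K_X+B\sim_{\bR}f^{*}(K_Z+B_Z+M_Z)$ with $(Z,B_Z+M_Z)$ a generalized pair whose log canonical divisor is ample of volume $v$, so that $Z$ lies in a bounded family by \cite[Theorem 1.4]{BVGP} and the general fibre $F$ lies in a bounded family by \cite[Theorem 6.2]{GOPV}. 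By the singularities theorem there is $\epsilon=\epsilon(d,\Phi,u,v)>0$ such that $(X,B)$, hence also $(X,B+A)$, is $\epsilon$-lc for every member of $\cS_{klt}(d,\Phi,\leq u,v)$; and since $\Phi$ is DCC it has a smallest nonzero element, so some $\delta=\delta(\Phi)>0$ bounds below all nonzero coefficients of $B+A$, whose coefficients lie in the DCC set $\Phi+\Phi$.

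\emph{Reduction to the strongly stable case.} Next I would reduce to the situation where $K_X+B+A$ is ample, after enlarging the data. The goal is to produce an effective $A'\geq A$, with coefficients in a fixed DCC set $\Phi'$, such that $K_X+B+A'$ is ample, $(X,B+A')$ is $\epsilon'$-lc for some uniform $\epsilon'>0$, and $\vol(K_X+B+A')$ is still bounded above by some $w'=w'(d,\Phi,u,v,w)$. Since $K_X+B=f^{*}L$ is nef and $A$ is ample over $Z$, a natural candidate is $A'=A+c\,f^{*}G$ with $c>0$ a fixed small rational number and $G\geq 0$ taken inside the bounded very ample linear system on $Z$ supplied by \cite[Theorem 1.4]{BVGP}, chosen general so that $(X,B+A')$ retains $\epsilon'$-lc singularities. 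The subtle point — and the place where the hypothesis $\vol(K_X+B+A)<w$ is genuinely used, rather than merely the boundedness of base and fibre — is to carry out this modification while keeping $\vol(K_X+B+A')$ bounded: one exploits that $\vol(K_X+B+A)<w$, $\vol(A|_F)\leq u$ and $\Ivol(K_X+B)=v$ together control the mixed intersection numbers $(K_X+B)^{d-i}\cdot A^{i}$, and hence bound how positive $G$ must be chosen. (If $K_X+B+A$ is already ample this step is vacuous; alternatively one may bypass the reduction and argue directly in the stable case along the lines of the proof of \cite[Theorem 1.9]{BVGP}.)

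\emph{Birational boundedness and conclusion.} Now $(X,B+A')$ is an $\epsilon'$-lc pair of dimension $d$ with $K_X+B+A'$ ample and $\vol(K_X+B+A')<w'$. By the birational boundedness of $\epsilon'$-lc pairs of bounded volume — which depends only on $d$, $\epsilon'$ and $w'$ and applies verbatim with $\bR$-coefficients (cf. \cite{HMX18,ACCLCT}) — the pair $(X,B+A')$ is log birationally bounded. Applying \cite[Theorem 1.6]{ACCLCT} to the log birationally bounded $\epsilon'$-lc pair $(X,B+A')$, whose boundary coefficients are $\geq\delta$ and whose log canonical divisor is ample, we conclude that $(X,B+A')$, and in particular $X$, lies in a bounded family. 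Taking a very ample divisor $H$ on $X$ realizing this, with $H^{d}\leq r$ and $(K_X+B+A')\cdot H^{d-1}\leq r$ for a uniform $r=r(d,\Phi,u,v,w)$, and using $A'\geq A$ together with the nefness of $H$, we obtain $(K_X+B+A)\cdot H^{d-1}\leq(K_X+B+A')\cdot H^{d-1}\leq r$, which is exactly the bound required in the definition of a bounded family. This proves Conjecture \ref{conj2}.

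\emph{The main difficulty.} The substantive input is the $\epsilon$-lc bound used at the outset, which is the heart of the paper and is proved earlier by a separate argument. Within the deduction above, the awkward point is the reduction to the strongly stable case — equivalently, producing an ample polarization of the form $K_X+B+A'$ with $A'$ of uniformly controlled coefficients — without losing the upper bound on the total volume; this genuinely requires the hypothesis $\vol(K_X+B+A)<w$, and it is precisely this extra hypothesis that separates Conjecture \ref{conj2} from the still-open Conjecture \ref{conj1}, where only the base $Z$ and the general fibre $F$ can be bounded.
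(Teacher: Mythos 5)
This statement is left as a conjecture in the paper: the paper only proves the special case (Theorem \ref{mainthm1}) in which one additionally assumes the intersection-number bounds $(K_X+B)^i\cdot A^{d-i}\leq w$ for all $i$, and the whole point of that extra hypothesis is that nobody currently knows how to get by with only $\vol(K_X+B+A)<w$. Your proposal does not close this gap, and it contains two concrete errors.

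First, the assertion that $(X,B)$ being $\epsilon$-lc implies ``hence also $(X,B+A)$ is $\epsilon$-lc'' is false. Theorem \ref{singularities of klt stable minimal models} controls the discrepancies of $(X,B)$ only; adding the effective divisor $A$ can destroy log canonicity entirely (e.g.\ $A$ may have large multiplicity along a subvariety, or many components through a point). Producing a \emph{uniform} $\lambda>0$ with $(X,B+\lambda A)$ lc is precisely the hard technical core of the paper's proof of Theorem \ref{mainthm1}: it requires Theorem \ref{lct for strong minimal model} (whose proof needs a volume bound for \emph{strongly} stable models obtained from the intersection-number hypothesis) and the inductive shrinking of $f(\Nlc(X,B+tA))$ via the length of extremal rays. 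Your subsequent construction of $A'=A+c\,f^*G$ inherits the same problem: you have no uniform control of the singularities of $(X,B+A')$, so the $\epsilon'$-lc input to \cite[Theorem 1.6]{ACCLCT} is unjustified.

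Second, the parenthetical claim that $\vol(K_X+B+A)<w$, $\vol(A|_F)\leq u$ and $\Ivol(K_X+B)=v$ ``together control the mixed intersection numbers $(K_X+B)^{d-i}\cdot A^{i}$'' is exactly the missing content of the conjecture, not an available fact. Since $A$ is only effective and ample over $Z$ (not nef on $X$), $K_X+B+A$ need not be nef, so $\vol(K_X+B+A)$ is not its top self-intersection and does not expand into those mixed terms; the only term one can recover is $(K_X+B)^{\dim Z}\cdot A^{d-\dim Z}=\Ivol(K_X+B)\vol(A|_F)$ as in Remark \ref{remove condition}. If this step could be carried out, Conjecture \ref{conj2} would follow immediately from Theorem \ref{mainthm1}, which is precisely what the paper says it cannot yet do. So the proposal, as written, assumes the two key points it needs to prove.
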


In \cite{moduliofalgebraicvarieties}, Birkar proved boundedness of $\bQ$-coefficient slc stable minimal models $(X,B),A$ of dimension $d$ assuming that the intersection number $(K_X+B)^i\cdot A^{d-i}$ is fixed for any $0\leq i\leq d$, see \cite[Theorem 1.12, Lemma 4.12]{moduliofalgebraicvarieties}. In light of his result, we prove a special case of Conjecture \ref{conj1}.

\begin{thm}\label{mainthm1}
	Let $d\in \bN$, $u,v,w\in \bR^{>0}$, and $\Phi\subset \bR^{\geq 0}$ be a DCC set. Let $(X,B),A\in \cS_{klt}(d, \Phi,\leq u,v)$ satisfy that the intersection number $(K_X+B)^i\cdot A^{d-i}\leq w$ for any $0\leq i\leq d$. Then the set of such $(X,B),A$ forms a bounded family.
\end{thm}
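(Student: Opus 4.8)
The plan is to reduce the statement to Birkar's boundedness theorem for $\bQ$-coefficient slc stable minimal models with fixed intersection numbers, \cite[Theorem 1.12]{moduliofalgebraicvarieties}, via a sequence of reductions which (i) replace the real boundary by a rational one with coefficients in a uniform finite set, and (ii) upgrade the hypothesis ``all intersection numbers $\le w$'' to ``all intersection numbers equal to one of finitely many prescribed values''. As a preliminary I would record the structural facts already observed after Conjecture \ref{conj2}: applying the effective adjunction formula of Theorem \ref{effective adjunction formula} we may write $K_X+B\sim_{\bR}f^*(K_Z+B_Z+M_Z)$ for a generalized klt pair $(Z,B_Z+M_Z)$ whose boundary part has coefficients in a DCC set depending only on $d$ and $\Phi$ and whose moduli part $M_Z$ is bounded; together with \cite[Theorem 1.4]{BVGP} this shows $(Z,B_Z+M_Z)$ lies in a bounded family, with $\vol(K_Z+B_Z+M_Z)=\Ivol(K_X+B)=v$, while the general fibre $(F,B|_F),A|_F$ is a klt polarized Calabi--Yau pair with $\vol(A|_F)\le u$ and lies in a bounded family by \cite[Theorem 6.2]{GOPV}. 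In particular $Z$ and $F$ are log bounded, $K_Z+B_Z+M_Z$ and $A|_F$ have bounded Cartier index, and $(A|_F)^{d-\dim Z}$ is bounded above and below.

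Next I would reduce to rational coefficients. The key input here is that $K_X+B$ is semi-ample; a given member has $B$ and $A$ supported on finitely many prime divisors, and a perturbation argument — using that $K_X+B$ is semi-ample, that $(X,B)$ is klt, that $0$ is not an accumulation point of $\Phi$, and ACC for log canonical thresholds to obtain uniformity — produces a fixed finite set $\Phi_0\subset\bQ^{\ge0}$ (depending only on $d,\Phi$) and finitely many rational boundaries $B^{(j)}$ with coefficients in $\Phi_0$ such that $B$ is a convex combination of the $B^{(j)}$, each $(X,B^{(j)})$ is klt with $K_X+B^{(j)}$ semi-ample defining the same contraction $f$, and $K_X+B^{(j)}+tA$ stays ample for small $t>0$; perturbing $A$ to a nearby effective $\bQ$-divisor $A'$ with coefficients in a finite set and still ample over $Z$, and enlarging the bounds slightly, we obtain $\bQ$-coefficient klt (hence slc) stable minimal models $(X,B^{(j)}),A'$ of dimension $d$ with $\vol(A'|_F)\le u'$ and $\Ivol(K_X+B^{(j)})\le v'$ for constants $u',v'$ depending only on $d,\Phi,u,v$, and it suffices to bound each such family.

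Now, with the coefficients of $B^{(j)}$ and $A'$ confined to a fixed finite set and $Z$ log bounded with $K_Z+B_Z+M_Z$ of bounded index, the divisor $K_X+B^{(j)}$, being a pullback from $Z$, has bounded index; combining this with boundedness of $F$ and of $A'|_F$ and restricting to fibres, one checks that each intersection number $(K_X+B^{(j)})^i\cdot(A')^{d-i}$ is a nonnegative rational number with denominator bounded in terms of the data, and is $\le w'$ by hypothesis after absorbing the perturbation. Hence these intersection vectors lie in a finite set, and we are reduced, for each choice of $B^{(j)}$, $A'$ and each admissible intersection vector, to a family of $\bQ$-coefficient slc stable minimal models of dimension $d$ with all intersection numbers $(K_X+B^{(j)})^i\cdot(A')^{d-i}$ fixed. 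By \cite[Theorem 1.12]{moduliofalgebraicvarieties} each such family is bounded; a finite union of bounded families is bounded, and since $X$ is unchanged under these operations this produces a uniform very ample $H$ on $X$ with $H^d\le r$ and $(K_X+B+A)\cdot H^{d-1}\le r$, as required.

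I expect the main obstacle to be the reduction to rational coefficients in the second paragraph: all the cited boundedness results are stated for $\bQ$-boundaries, and making the rational-polytope-of-boundaries argument uniform over the whole family — so that $\Phi_0$ and the enlarged constants $u',v',w'$ depend only on $d,\Phi,u,v,w$ and the perturbed $K_X+B^{(j)}$ remain semi-ample with the same contraction — is the delicate technical point, where the DCC hypothesis on $\Phi$ and ACC-type results are genuinely needed. A secondary point requiring care is controlling the Cartier index of $X$ (equivalently, ensuring the intersection numbers are discrete) without circularity, which is precisely why the argument is routed through the bounded base $Z$ and the bounded general fibre $F$ rather than through $X$ directly.
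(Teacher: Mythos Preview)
Your strategy---perturb to rational coefficients, argue that the intersection numbers $(K_X+B^{(j)})^i\cdot(A')^{d-i}$ then lie in a fixed discrete (hence finite) set, and invoke \cite[Theorem~1.12]{moduliofalgebraicvarieties}---does not go through, and the obstruction is precisely the ``secondary point'' you flag at the end. To conclude that these intersection numbers have bounded denominator you need to control the Cartier index of $A'$ \emph{on $X$}, not merely on the general fibre $F$. Boundedness of $F$ only tells you that $A'|_F$ has bounded Cartier index on $F$; it says nothing about $A'$ over special fibres, and for $i<\dim Z$ the number $(K_X+B^{(j)})^i\cdot(A')^{d-i}$ genuinely involves the behaviour of $A'$ over positive-dimensional loci in $Z$. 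Bounding the Cartier index of $X$ (or of $A'$) is essentially equivalent to the boundedness you are trying to prove, so this step is circular. There are further difficulties upstream: if $X$ is not $\bQ$-factorial then a coefficient-perturbation $A'$ of $A$ need not be $\bQ$-Cartier at all; and after decomposing $K_X+B=\sum r_j(K_X+B^{(j)})$ with $K_X+B^{(j)}\sim_\bQ 0/Z$ it is not automatic that $K_X+B^{(j)}$ is semi-ample (equivalently, that its image on $Z$ is ample), and ACC for lc thresholds does not address this positivity issue.

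The paper avoids all of this by \emph{not} reducing to the $\bQ$-coefficient theorem. Instead, using Theorem~\ref{singularities of klt stable minimal models} to get $(X,B)$ $\epsilon$-lc and Theorem~\ref{decomposition of ample divisor} to decompose $K_Z+B_Z+M_Z$ on the base into pieces with uniformly very ample multiples, it runs an induction on $k$ showing that for a fixed $\lambda_k>0$ the image $P(\lambda_k)=f(\Nlc(X,B+\lambda_kA))$ has dimension $\le\dim Z-k$ and that $K_X+B+\lambda_kA$ is nef in dimension $k-1$ over $Z$. The inductive step alternates between (i) using cone-theorem/length-of-extremal-ray arguments to push nefness up one dimension, and (ii) cutting by general members of $|L_1|$, bounding $\vol(K_S+B_S+\mu A_S)$ on the resulting slice (this is where nefness in dimension $k-1$ is used, via Lemma~\ref{nef in dimension}), and applying the lc-threshold bound of Theorem~\ref{lct for strong minimal model} together with inversion of adjunction to shrink $P(\lambda)$. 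At the end one obtains a fixed $\lambda$ with $(X,B+\lambda A)$ lc and $K_X+B+\lambda A$ nef, and concludes by \cite[Theorems~1.3,~1.6]{ACCLCT}. The whole point is that this route never needs discreteness of the intersection numbers and never needs to control the Cartier index of $A$ on $X$.
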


Note that the condition on $\vol(A|_F)$ can  be removed in Theorem \ref{mainthm1} (see Remark \ref{remove condition}).

The main difficulty of the proof comes from $\bR$-coefficients, which is not dealt in \cite{BVGP,moduliofalgebraicvarieties}. We will explain this more precisely in the sketch of proofs. Indeed, Birkar expected that \cite[Theoerm 1.12, Theorem 1.14]{moduliofalgebraicvarieties} could be generalized to the real coefficient case (cf. \cite[Subsection 11.11]{moduliofalgebraicvarieties}).  Theorem \ref{mainthm1} confirms that $\bR$-coefficient version of \cite[Theoerm 1.12]{moduliofalgebraicvarieties} holds for klt stable minimal models. However, our method can not be applied to the slc case. 

One of the key steps of the proof of Theorem \ref{mainthm1} is controlling the singularities of klt stable minimal models in $\cS_{klt}(d, \Phi,\leq u,v)$ uniformly.

\begin{thm}\label{singularities of klt stable minimal models}
	Let $d\in \bN$, $u, v\in \bR^{>0}$, and $\Phi\subset \bR^{\geq0}$ be a DCC set. Then there exists $\epsilon\in \bR^{>0}$ satisfying the following. 
	
	If $(X,B), A\in \cS_{klt}(d, \Phi, \leq u, v)$, then $(X,B)$ is $\epsilon$-lc.
\end{thm}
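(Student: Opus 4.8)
The plan is to argue by contradiction using the standard ``global ACC + effective adjunction'' machinery, adapted to the relative setting of a Calabi--Yau fibration. Suppose no such $\epsilon$ exists. Then there is a sequence $(X_i,B_i),A_i\in\cS_{klt}(d,\Phi,\leq u,v)$ with associated contractions $f_i\colon X_i\to Z_i$ such that the minimal log discrepancy $\operatorname{mld}(X_i,B_i)\to 0$. The first step is to pass to the base: by Theorem~\ref{effective adjunction formula} (the effective adjunction / canonical bundle formula stated earlier) one writes $K_{X_i}+B_i\sim_{\bR}f_i^*(K_{Z_i}+B_{Z_i}+M_{Z_i})$ for a generalized pair $(Z_i,B_{Z_i}+M_{Z_i})$ whose boundary coefficients lie in a fixed DCC set $\Psi=\Psi(d,\Phi)$ and whose nef part $M_{Z_i}$ is ``bounded'' in the appropriate sense; moreover $\operatorname{Ivol}(K_{X_i}+B_i)=v$ translates into $\operatorname{vol}(K_{Z_i}+B_{Z_i}+M_{Z_i})=v$. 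Combined with \cite[Theorem~1.4]{BVGP}, the bases $Z_i$ (together with the generalized boundary) form a bounded family, and in particular one gets a uniform $\delta>0$ with $(Z_i,B_{Z_i}+M_{Z_i})$ being generalized $\delta$-lc. Simultaneously, since $\operatorname{vol}(A_i|_{F_i})\leq u$ and $(F_i,B_i|_{F_i})$ is klt Calabi--Yau of dimension $\dim F_i\leq d$ with coefficients in $\Phi$, \cite[Theorem~6.2]{GOPV} gives that the general fibers $(F_i,B_i|_{F_i})$ form a bounded family, hence are uniformly $\delta'$-lc.

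The heart of the argument is then to propagate $\epsilon$-lc-ness from the base and the general fiber to the total space. I would localize: let $E_i$ be a prime divisor over $X_i$ computing a log discrepancy $a_i:=a(E_i,X_i,B_i)\to 0$, and let $W_i\subset X_i$ be the center of $E_i$. Either $W_i$ dominates $Z_i$, or it does not. In the dominating case, $E_i$ restricts (after a suitable log resolution and general hyperplane-section / generic-fiber argument) to a divisor over the general fiber $F_i$ with the same log discrepancy, contradicting the uniform $\delta'$-lc-ness of the $(F_i,B_i|_{F_i})$; one must be slightly careful that the coefficient of $B_i|_{F_i}$ still lies in $\Phi$ and that adjunction does not increase discrepancies, both of which are standard. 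In the non-dominating case, $W_i$ maps into a proper subset $T_i:=f_i(W_i)\subsetneq Z_i$, and I would use the canonical bundle formula inequality for log discrepancies under a fibration: roughly, $a(E_i,X_i,B_i)$ is bounded below in terms of $a(\,\cdot\,,Z_i,B_{Z_i}+M_{Z_i})$ at the generic point of $T_i$ together with the discrepancies of $(F_i,B_i|_{F_i})$ and a contribution from the multiplicities appearing in the moduli/discriminant part. Here the DCC hypothesis on $\Phi$ and the boundedness of $(Z_i,B_{Z_i}+M_{Z_i})$ feed into the global ACC for lc thresholds / for the coefficients of the discriminant divisor \cite{ACCLCT}, ruling out $a_i\to 0$.

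I expect the main obstacle to be exactly the bookkeeping in the non-dominating case: making the canonical bundle formula quantitatively control log discrepancies on $X_i$ in terms of discrepancies downstairs, while keeping all coefficient sets DCC and uniform, and handling the nef (moduli) part $M_{Z_i}$ of the generalized pair simultaneously. A secondary technical point, forced by the paper's emphasis on $\bR$-coefficients, is that one cannot simply invoke the $\bQ$-coefficient results of \cite{BVGP,moduliofalgebraicvarieties} verbatim: the DCC property of $\Phi$ and the global ACC theorem must be used to reduce to, or simulate, the finitely-many-coefficients situation — e.g. by perturbing $B_i$ within a fixed rational polytope, or by showing the relevant coefficients of $B_{Z_i}$ already lie in a fixed \emph{finite} set once $v$ is fixed. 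Once the contradiction is reached in both cases, the existence of a uniform $\epsilon$ follows, proving Theorem~\ref{singularities of klt stable minimal models}.
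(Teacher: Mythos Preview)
Your approach is essentially the paper's: apply the effective adjunction formula (Theorem~\ref{effective adjunction formula}), conclude that $(Z,B_Z+M_Z)\in\cF_{gklt}(\dim Z,\Psi\cup I,v)$ and hence is generalized $\delta$-lc by \cite[Theorem~1.5]{BVGP}, then split prime divisors $D$ over $X$ into horizontal and vertical over $Z$. Two simplifications relative to your outline are worth recording.

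First, the horizontal case does not need boundedness of the fibers via \cite[Theorem~6.2]{GOPV}: since $(F,B_F)$ is a klt log Calabi--Yau pair with $B_F\in\Phi$, \cite[Lemma~2.48]{BABI} already gives a uniform $\tau>0$ with $(F,B_F)$ $\tau$-lc; a horizontal $D$ over $X$ determines a divisor $S$ over $F$ with $a(D,X,B)=a(S,F,B_F)\geq\tau$. The polarization $A$ plays no role here.

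Second, the vertical case---which you flag as the main obstacle---is in fact a two-line computation from the \emph{definition} of the discriminant divisor, with no extra bookkeeping and no separate handling of the moduli part. After passing to resolutions $\pi\colon\XX\to X$ and $\mu\colon\ZZ\to Z$ so that $D$ is a divisor on $\XX$ mapping onto a divisor $E$ on $\ZZ$, the generalized $\delta$-lc property of $(Z,B_Z+M_Z)$ gives $\mult_E B_{\ZZ}\leq 1-\delta$; by definition this means the lc threshold of $\ff^*E$ with respect to $(\XX,\BB)$ over the generic point of $E$ is at least $\delta$, so $\mult_D\BB+\delta\cdot\mult_D(\ff^*E)\leq 1$ and hence $a(D,X,B)=1-\mult_D\BB\geq\delta$. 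Taking $\epsilon=\min\{\tau,\delta\}$ finishes the proof directly; no contradiction argument or sequence $(X_i,B_i)$ is needed.
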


In light of Theorem \ref{singularities of klt stable minimal models}, if we restrict ourselves to $\epsilon$-lc stable minimal models for some fixed $\epsilon\in \bR^{>0}$, then in Theorem \ref{mainthm1} the condition ``$\Ivol(K_X+B)=v$" can be replaced with ``$\Ivol(K_X+B)\leq v$". However, in this case, $\Phi$ should be a finite set by technical reasons.

\begin{thm}\label{mainthm2}
	Let $d\in \bN$, $\epsilon, u,v,w\in \bR^{>0}$, and $\Phi\subset \bR^{\geq0}$ be a finite set. Let $(X,B),A\in\cS_{klt}(d, \Phi,\leq u,\leq v)$ satisfy that $(X,B)$ is $\epsilon$-lc, and the intersection number $(K_X+B)^i\cdot A^{d-i}\leq w$ for any $0\leq i\leq d$. Then the set of such $(X,B),A$ forms a bounded family.
\end{thm}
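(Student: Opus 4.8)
The proof will follow the architecture of the proof of Theorem~\ref{mainthm1}, the new features being that the conclusion of Theorem~\ref{singularities of klt stable minimal models} is no longer available (the $\epsilon$-lc property has become a hypothesis) and that $\Ivol(K_X+B)$ is only bounded above rather than fixed; the finiteness of $\Phi$ is what lets us absorb this loss.

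The first two steps are as in Theorem~\ref{mainthm1} and use only the bound $\Ivol(K_X+B)\le v$. Let $f\colon X\to Z$ be the contraction defined by $K_X+B$. By the effective adjunction formula (Theorem~\ref{effective adjunction formula}) we may write $K_X+B\sim_{\bR}f^{*}(K_Z+B_Z+M_Z)$, where $(Z,B_Z+M_Z)$ is a generalized klt pair with $K_Z+B_Z+M_Z$ ample of volume $\Ivol(K_X+B)\le v$ and with discriminant coefficients in a fixed DCC set; together with \cite[Theorem~1.4]{BVGP} this puts $Z$ in a bounded family. The general fibre $(F,B|_F),A|_F$ lies in a bounded family by \cite[Theorem~6.2]{GOPV}. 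In both steps the hypothesis that $(X,B)$ is $\epsilon$-lc provides the uniform control of singularities that, in Theorem~\ref{mainthm1}, came from Theorem~\ref{singularities of klt stable minimal models}; in particular it forces $(Z,B_Z+M_Z)$ to be generalized $\epsilon'$-lc for a uniform $\epsilon'$.

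The core of the argument is to bound the total space. Here I would use that $\Phi$ is finite to reduce the $\bR$-coefficient problem to one about finitely many integral divisors: writing $B=\sum_{\phi\in\Phi}\phi B^{(\phi)}$ and $A=\sum_{\psi\in\Phi}\psi A^{(\psi)}$ with $B^{(\phi)},A^{(\psi)}$ integral and effective, it suffices to produce a very ample $H$ on $X$ with $H^{d}$, each $B^{(\phi)}\cdot H^{d-1}$ and each $A^{(\psi)}\cdot H^{d-1}$ bounded, since then $(K_X+B+A)\cdot H^{d-1}$ is bounded as well. To get such an $H$ I would run the strategy of Theorem~\ref{mainthm1}: with $Z$ bounded, fix a very ample $H_Z$ on $Z$ of bounded degree; then $X\to Z$ is a family of Calabi--Yau fibrations with bounded base, bounded general fibre, relatively ample $A$ of bounded relative volume $\vol(A|_F)\le u$, and with $f^{*}H_Z+mA$ ample for a suitable bounded $m$; combined with the $\epsilon$-lc hypothesis and the bounds $(K_X+B)^{i}\cdot A^{d-i}\le w$ (which bound the intersection numbers of $f^{*}H_Z$ and $A$, hence of $f^{*}H_Z+mA$, on $X$), a boundedness criterion for $\epsilon$-lc pairs gives that $X$, with a polarization built from $f^{*}H_Z$ and $A$, is bounded, and a standard argument then upgrades this to boundedness of the datum $(X;\{B^{(\phi)}\},\{A^{(\psi)}\})$.

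I expect the main obstacle to be precisely the point flagged after Theorem~\ref{mainthm1}: carrying the $\bR$-coefficient arguments through when $\Ivol(K_X+B)$ is allowed to vary. The subtlety is that $\Ivol(K_X+B)=\vol(K_Z+B_Z+M_Z)$ ranges over an infinite set (already because $\Phi$ may contain irrational numbers), so one cannot split into finitely many cases according to the Iitaka volume and invoke Theorem~\ref{mainthm1} directly; instead the boundedness argument must be run uniformly over all members with $\Ivol(K_X+B)\le v$, which is exactly where both the finiteness of $\Phi$ and the uniform $\epsilon$-lc bound are indispensable, and where the $\bR$-coefficient version of \cite[Theorem~1.12]{moduliofalgebraicvarieties} developed in the proof of Theorem~\ref{mainthm1} is invoked. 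A further delicate point is choosing the coefficient $t$ in $K_X+B+tA$ (and the $m$ above) uniformly: naive choices need not keep the perturbed pair uniformly $\epsilon$-lc, so this has to be arranged using the boundedness of $Z$ and $F$ together with the intersection-number bounds.
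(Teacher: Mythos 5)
Your proposal goes wrong at exactly the point where you dismiss the paper's strategy. You claim that one \emph{cannot} split into finitely many cases according to $\Ivol(K_X+B)$ because, $\Phi$ possibly containing irrational numbers, the Iitaka volume ``ranges over an infinite set''. That is precisely what the hypotheses of the theorem rule out, and the whole point of the paper's proof: by Theorem~\ref{finite effective adjunction} (the finite-coefficient effective adjunction, which uses $\Phi$ finite and $(X,B)$ $\epsilon$-lc), the adjunction $K_X+B\sim_{\bR}f^*(K_Z+B_Z+M_Z)$ can be arranged so that $B_Z$ and the decomposition data of $M_Z$ lie in fixed \emph{finite} sets; by Lemma~\ref{singularities on base} the base is generalized $\delta$-lc for uniform $\delta$; and then Lemma~\ref{volume fixed} (the generalized-pair version of Filipazzi's finiteness of volumes, proved in the paper exactly for this purpose) shows that $\vol(K_Z+B_Z+M_Z)=\Ivol(K_X+B)$ lies in a \emph{finite} set depending only on $d,\epsilon,u,v,\Phi$. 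One then fixes the value $v_0$ and quotes Theorem~\ref{mainthm1}. This is also why Theorem~\ref{mainthm2} assumes $\Phi$ finite and $\epsilon$-lc: these are the inputs to Lemma~\ref{volume fixed}. Your proposal is missing this key lemma and, worse, asserts its conclusion is false.

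Your alternative ``uniform'' route is not carried out and has real gaps. Several ingredients in the proof of Theorem~\ref{mainthm1} are formulated for \emph{fixed} Iitaka volume (the application of \cite[Theorem 1.5]{BVGP} to $\cF_{gklt}(s,\Psi\cup I,v)$, Lemma~\ref{bound Cartier index}, Theorem~\ref{decomposition of ample divisor}, and the fixed value $v_k$ in Step~6), so ``running the boundedness argument uniformly over all members with $\Ivol\le v$'' is not a routine modification. Concretely, your claim that the bounds $(K_X+B)^i\cdot A^{d-i}\le w$ bound the intersection numbers of $f^*H_Z$ and $A$ requires comparing $H_Z$ with the ample divisor $K_Z+B_Z+M_Z$ uniformly in the nef cone (some bounded $m'$ with $m'(K_Z+B_Z+M_Z)-H_Z$ nef), and this is exactly where the finite coefficient set and controlled Cartier index/volume enter via Theorem~\ref{decomposition of ample divisor}; it does not follow merely from $Z$ being bounded and $H_Z$ of bounded degree, since $K_Z+B_Z+M_Z$ could a priori be arbitrarily small against curves. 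You also never produce a uniform $\lambda$ with $(X,B+\lambda A)$ lc and $K_X+B+\lambda A$ nef (the heart of Theorem~\ref{mainthm1}), nor a very ample $H$ with the required bounds, so the final boundedness criterion is not actually reached. The correct and much shorter argument is the reduction via Lemma~\ref{volume fixed} described above.
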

~\\
\paragraph{\textbf{Sketch of proofs}} 
We start with Theorem \ref{singularities of klt stable minimal models}. Let $(X,B),A\in \cS_{klt}(d,\Phi,\leq u,v)$. Let $f:X\to Z$ be a contraction defined by the semi-ample $\bR$-divisor $K_X+B$. First we generalize the effective adjunction formula in \cite[Lemma 7.4]{BVGP} to the $\bR$-coefficient case (see Theorem \ref{effective adjunction formula}). The main new tool of proving Theorem \ref{effective adjunction formula} is the uniform rational polytope for adjunction formula (cf. \cite[Theorem 3.3]{effectiveadjunctionformulawithrealcoefficients}).

 Next, by Theorem \ref{effective adjunction formula}, there is an adjunction formula $K_X+B\sim_{\bR}f^*(K_Z+B_Z+M_Z)$ such that $(Z,B_Z+M_Z)\in \cF_{gklt}(\dim Z,\Psi,v)$ for some fixed DCC set $\Psi\subset \bR^{\geq 0}$ (see Definition \ref{familyofgpairs} for the definition of $\cF_{gklt}(\dim Z,\Psi,v)$). Then by \cite[Theorem 1.5]{BVGP}, there exists a fixed $\delta\in \bR^{>0}$ such that $(Z,B_Z+M_Z)$ is generalized $\delta$-lc. 

Take a prime divisor $D$ over $X$. If $D$ is horizontal over $Z$, then $D$ determines a prime divisor $S$ over the general fiber $F$ of $f:X\to Z$. Since $(F,B_F)$ is a klt log Calabi-Yau pair, and $B_F\in \Phi$, then $(F,B_F)$ is $\tau$-lc for some fixed $\tau\in \bR^{>0}$ by \cite[Lemma 2.48]{BABI}. Hence in this case, $a(D,X,B)=a(S,F,B_F)\geq \tau$. On the other hand, if $D$ is vertical over $Z$, take resolutions $\pi:\XX\to X$ and $\mu:\ZZ\to Z$ such that $\ff:\XX\dasharrow \ZZ$ is a morphism, $D$ is a prime divisor on $\XX$ and $E=\ff(D)$ is a prime divisor on $\ZZ$. Let $K_{\XX}+\BB=\pi^*(K_X+B)$. By the adjunction formula, $(\XX,\BB+\delta \ff^*E)$ is lc over the generic point of $E$, hence $a(D,X,B)=a(D,\XX,\BB)\geq 1-\delta$. Choose $\epsilon=\min\{\tau,\delta\}$, and we finish the proof.
~\\

Next we move on to Theorem \ref{mainthm1}. Let $(X,B),A\in \cS_{klt}(d, \Phi,\leq u,v)$ satisfy that the intersection numbers $(K_X+B)^i\cdot A^{d-i}\leq w$. By Theorem \ref{singularities of klt stable minimal models}, we can assume that $(X,B)$ is $\epsilon$-lc for some fixed $\epsilon\in\bR^{>0}$. As in the proof of \cite[Theorem 4.1]{moduliofalgebraicvarieties}, 
 the key point of the proof is to find a fixed $\lambda\in\bR^{>0}$ such that $(X,B+\lambda A)$ is lc and $K_X+B+\lambda A$ is nef, because then $(X,B+\frac{\lambda}{2}A)$ is $\frac{\epsilon}{2}$-lc, $K_X+B+\frac{\lambda}{2} A$ is ample, and $\vol(K_X+B+\frac{\lambda}{2}A)$ is bounded from above by the upper bound of the intersection numbers, hence $(X,B),A$ belongs to a bounded family by \cite[Theorem 1.3, Theorem 1.6]{ACCLCT}. 
 
 In the proof of \cite[Theorem 4.1]{moduliofalgebraicvarieties}, Birkar showed that there is a fixed $p\in \bN$ and an adjunction formula  $K_X+B\sim_{\bR}f^*(K_Z+B_Z+M_Z)$, such that $L:=p(K_Z+B_Z+M_Z)$ is very ample. Then take a general member $T$ of $|L|$, and let $S$ be the pullback of $T$ on $X$. There is a new stable minimal model $(S,B_S),A_S:=A|_S$ and a fibration $S\to T$. The intersection numbers $(K_S+B_S)^i\cdot A_S^{d-1-i}$ are controlled by $(K_X+B)^i\cdot A^{d-i}$. Therefore, by induction $(S,B_S),A_S$ is bounded. Then using \cite[Lemma 4.7]{moduliofalgebraicvarieties}, the inversion of adjunction formula, and the length of extremal rays, we can find a fixed $\lambda\in \bR^{>0}$ such that $(X,B+\lambda A)$ is lc, and $K_X+B+\lambda A$ is nef.  
 
 However, in $\bR$-coefficient case, $K_Z+B_Z+M_Z$ is not a $\bQ$-divisor, hence there is no such $p\in \bN$ that makes $p(K_Z+B_Z+M_Z)$ very ample. An alternative way is to decompose $K_Z+B_Z+M_Z$ uniformly, i.e., $K_Z+B_Z+M_Z=\sum_{i=1}^lr_i(K_Z+B_{i,Z}+M_{i,Z})$ such that $r_i\geq 0$, $\sum_{i=1}^l r_i=1$, and there is a fixed $p\in \bN$ such that $L_i:=p(K_Z+B_{i,Z}+M_{i,Z})$ is very ample for any $i$ (see Theorem \ref{decomposition of ample divisor}). But if we take a general member $T$ of $|L_1|$, and denote by $(S,B_S),A_S\to T$ the induced stable minimal model, then there is no way to control the upper bound of the intersection numbers $(K_S+B_S)^j\cdot A_S^{d-1-j}$ for $0\leq j\leq d-1$, which means that $\vol(K_S+B_S+A_S)$ cannot be controlled. Hence we cannot use induction in this way.
 
To solve this problem, our new idea is to consider the image of non-lc locus of $(X,B+tA)$ in $Z$, which we denote by $P(t)$, i.e., $P(t)=f(\Nlc(X,B+tA))$, and then proving the following two statements inductively:
\begin{enumerate}[wide=13pt]
	\item There exists a fixed $\lambda_k\in \bR^{>0}$ such that $\dim P(\lambda_k)\leq \dim Z-k$.
	\item There exists a fixed $\mu_k\in \bR^{>0}$ such that $K_X+B+\mu_k A$ is nef in dimension $k-1$ over $Z$ (see Definition  \ref{relatively nefness} for the definition of relatively nefness). 
\end{enumerate}

Let $F$ be the general fiber of $X\to Z$, and $(F,B_F), A_F$ be the restriction of $(X,B),A$ on $F$. Then $(F,B_F),A_F$ is a polarized log Calabi-Yau pair. By \cite[Theoem 6.2]{BVGP}, $(F,B_F), A_F$ is bounded. By \cite[Theorem 1.8]{BABII}, there is a fixed $\lambda_1\in \bR^{>0}$ such that $(F,B_F+\lambda_1A_F)$ is lc. By the inversion of adjunction \cite{inversionofadjunction,haconinversion}, $\dim P(\lambda_1)\leq \dim Z-1$. Since $K_X+B$ is semi-ample and $A$ is ample over $Z$, automatically $K_X+B+A$ is nef in dimension $0$ over $Z$. Therefore, the two statements hold for $k=1$.

When $k\geq 1$, assume there is a fixed $\lambda_k\in\bR^{>0}$ such that $\dim P(\lambda_k)\leq \dim Z-k$.  Let $T$ be a general complete intersection of dimension $k$, and $S$ be the pullback of $T$ on $X$. The adjunction formula gives a pair $(S,B_S+\lambda_kA_S)$ on $S$. By \cite[Theorem 1.2, Corollary 1.4]{fujinoadjunction}, we have $\Nlc(S,B_S+\lambda_k A_S)=S\cap \Nlc(X,B+\lambda_k A)$. Since $\dim P(\lambda_k)\leq \dim Z-k$, we deduce that $\Nlc(S,B_S+\lambda_k A_S)$ is contained in finitely many fibers of $S\to T$. By the standard method of using the length of extremal rays, there is a fixed $\mu_{k+1}\in \bR^{>0}$, such that $K_S+B_S+\mu_{k+1} A_S$ is globally nef. Thus $K_X+B+\mu_{k+1}A$ is nef over $Z$ in dimension $k$.

On the other hand, assume there is a fixed $\mu_k\in \bR^{>0}$ such that $K_X+B+\mu_k A$ is nef in dimension $k-1$ over $Z$. First by Theorem \ref{effective adjunction formula}, we construct an adjunction formula $K_X+B\sim_{\bR}f^*(K_Z+B_Z+M_Z)$, and decompose $K_Z+B_Z+M_Z$ uniformly, i.e., $K_Z+B_Z+M_Z=\sum_{i=1}^lr_i(K_Z+B_{i,Z}+M_{i,Z})$ such that $r_i\geq 0$, $\sum_{i=1}^lr_i=1$, and there is a fixed $p\in \bN$ such that $L_i:=p(K_Z+B_{i,Z}+M_{i,Z})$ is very ample for any $i$. Then there exists a fixed $m\in \bN$ such that $m(K_Z+B_Z+M_Z)-L_1$ is nef. Take $T$ as a general complete intersection of dimension $k-1$ cut by hypersurfaces in $|L_1|$. Let $S$ be the pullback of $T$ on $X$. The adjunction formula gives a pair $(S,B_S+\mu_k A_S)$. By the definition of relative nefness in dimension $k-1$, $K_S+B_S+\mu_k A_S$ is nef. Decreasing $\mu_k$, we can assume that $K_S+B_S+\mu_k A_S$ is ample. Therefore, $(S,B_S),\mu_kA_S$ is a strongly stable minimal model. Although the intersection numbers $(K_S+B_S)^i\cdot A_S^{\dim S-i}$ are still not controlled, we can control $\vol(K_S+B_S+\mu_k A_S)$, because $m(K_Z+B_Z+M_Z)-L_1$ is nef, and $K_X+B+\mu_k A$ is nef in dimension $k-1$ over $Z$. Therefore, by some calculations, we prove that $\Ivol(K_S+B_S)$ is in a fixed finite set and $\vol(K_S+B_S+\mu_k A_S)$ is bounded from above. By the lower bound of lc thresholds in strongly stable minimal models (see Theorem \ref{lct for strong minimal model}), there is a fixed $\lambda_k\in \bR^{>0}$ such that $(S,B_S+\lambda_k A_S)$ is lc. By the inversion of adjunction  \cite{inversionofadjunction,haconinversion}, we conclude that $P(\lambda_k)\cap T=\emptyset$. Note that $T$ is a general complete intersection of dimension $k-1$, hence $\dim P(\lambda_k)\leq s-k$.

Applying induction  on $k$, we can find such $\lambda$ that $(X,B+\lambda A)$ is lc outside from finitely many fibers of $f:X\to Z$ and $K_X+B+\lambda A$ is nef. Applying the standard method of using the length of extremal rays again, decreasing $\lambda$, we conclude that $(X,B+\lambda A)$ is globally lc. Therefore, we finish the proof of Theorem \ref{mainthm1}.
~\\

At last, we give a brief sketch of the proof of Theorem \ref{mainthm2}. Let $(X,B),A\in \cS(d,\Phi,\leq u,\leq v)$ satisfy that the intersection numbers $(K_X+B)^i\cdot A^{d-i}\leq w$. Let $f: X\to Z$ be the contraction defined by the semi-ample $\bR$-divisor $K_X+B$. By Theorem \ref{finite effective adjunction}, we construct an adjunction formula $K_X+B\sim_{\bR}f^*(K_Z+B_Z+M_Z)$ such that $(Z,B_Z+M_Z)\in \cF_{gklt}(\dim Z, J, \leq v)$ for some fixed finite set $J\subset \bR^{\geq0}$, and $(Z,B_Z+M_Z)$ is generalized $\delta$-lc for some fixed $\delta\in \bR^{>0}$. Then applying the proof of \cite[Theorem 1.3]{filipazzi2020some}, we show that $\vol(K_Z+B_Z+M_Z)$ is in a fixed finite set (see Lemma \ref{volume fixed}). Thus Theorem \ref{mainthm2} is a consequence of Theorem \ref{mainthm1}.
~\\

\paragraph{\textbf{Acknowledgements}} The author expresses his gratitude to his advisor Meng Chen for great support and encouragement. He would like to thank Jingjun Han for very effective discussions and valuable suggestions. He is grateful to Xiaowei Jiang, Junpeng Jiao, Mengchu Li and Hexu Liu for giving useful comments.  He would also like to thank the anonymous referee for the long list of suggestions and corrections which improved this article considerably.

\section{Preliminaries}

\subsection{Divisors}
\begin{definition}[ACC sets and DCC sets]
	Let $\Phi\subset \bR^{\geq0}$. We say $\Phi$ satisfies the \textit{ascending chain condition} (ACC) if it does not contain an infinite strictly increasing sequence. We say $\Phi$ satisfies the \textit{descending chain condition} (DCC) if it does not contain an infinite strictly decreasing sequence.
\end{definition}

\begin{definition}[Coefficients of divisors]
	Let $\delta\in \bR^{>0}$, $\Phi\subset \bR^{\geq0}$, and $D$ be an $\bR$-divisor on a normal variety $X$. Write $D=\sum a_iD_i$ where $D_i$'s are different Weil divisors on $X$. We denote $D\in \Phi$ if $a_i\in \Phi$ for any $i$, and denote $D\geq \delta$ (resp. $D\leq \delta$) if $a_i\geq\delta$ (resp. $a_i\leq \delta$) for any $i$.
\end{definition}

\begin{definition}[Contractions]
	We say a projective morphism $f:X\to Z$ between normal varieties is a \textit{contraction} if $f_*\cO_X=\cO_Z$. In particular, $f$ has connected fibers.
\end{definition}

\begin{definition}[Horizontal part and vertical part of divisors]
	Let $f:X\to Z$ be a contraction between normal varieties. Let $D$ be an $\bR$-divisor on $X$. We say that $D$ is \textit{vertical} over $Z$ if $f(\Supp D)$ is a proper subset of $Z$. We say that $D$ is \textit{horizontal} over $Z$ if the induced map $\Supp D\to Z$ is dominant. 
	
	Given an $\bR$-divisor $D$ on $X$, there is a unique decomposition $D=D^h+D^v$ such that 
	\begin{itemize}
		\item$\Supp D^h$, $\Supp D^v$ have no common components,
		\item every component of $\Supp D^h$ is horizontal over $Z$, and
		\item $D^v$ is vertical over $Z$.
	\end{itemize}
	We call $D^h$ the \textit{horizontal part} of $D$ and $D^v$ the \textit{vertical part} of $D$ with respect to $f:X\to Z$.
\end{definition}

\begin{definition}[Invariant Iitaka dimensions, {\cite[Definition 2.2.1]{invariantIitakadimension}}]
	Let $D$ be an $\bR$-divisor on a projective normal variety $X$. We define the \textit{invariant Iitaka dimension} $\kappa_\iota(X,D)$ as follows.  If $|D|_\bR \neq \emptyset$, let $\kappa_\iota(X,D)=\kappa(X,\DD)$ for some $\bR$-divisor $\DD\in |D|_\bR$. Here, the right hand side is the usual Iitaka dimension of $\DD$.  Note that in this case $\kappa_\iota(X,D)$ does not depend on the choice of $\DD$ by \cite[Corollary 2.1.4]{invariantIitakadimension}. If $|N|_\bR=\emptyset$, let $\kappa_\iota(X,D)=-\infty$.
\end{definition}

Next we generalize the definition of Iitaka volume in \cite{Iitakavolume} to $\bR$-divisors. It is also called pseudo-volume in \cite{invariantIitakadimension}.
\begin{definition}\label{Iitaka volume}(Iitaka volumes of $\bR$-divisors, {\cite[Definition 1.1]{Iitakavolume}, see also \cite [Definition 2.1.1]{invariantIitakadimension}})
	Let $D$ be an $\bR$-divisor on a projective normal variety $X$ with invariant Iitaka dimension $\kappa_\iota(D)$. We define \textit{Iitaka volume} $\Ivol(D)$ of $D$ as follows. If $\kappa_\iota(D)\geq 0$, let $\DD$ be an element of $|D|_\bR$, and then
	$$\Ivol(D):=\limsup_{m\to \infty} \frac{h^0(\rounddown{m\DD})}{m^{\kappa_\iota(D)}/\kappa_\iota(D)!}.$$ Note that in this case $\Ivol(D)$ does not depend on the choice of $\DD$ by \cite[Corollary 2.1.4]{invariantIitakadimension}.
	If $\kappa_\iota(D)=-\infty$, then let $\Ivol(D)=0$.
\end{definition}

If $f: X\to Z$ is a contraction between two normal varieties and $D\sim_\bR f^*L$ for some big $\bR$-divisor $L$ on $Z$, then $\Ivol(D)=\vol(L)$. 

\begin{definition}[b-divisors]
	Let $X$ be a normal variety. A \textit{b-divisor} $\textbf{M}$ is a collection of $\bR$-divisors $M_Y$ on $Y$ for each birational contraction $Y\to X$ from a normal variety and satisfies the following: if $\YY\to Y\to X$ are birational contractions, then the pushdown of $M_{\YY}$ on $Y$ is $M_Y$.
	
	We say a b-divisor $\textbf{M}$ is \textit{b-$\bR$-Cartier} if there is a birational contraction $Y\to X$ such that 
	\begin{itemize}
		\item $M_Y$ is $\bR$-Cartier, and
		\item if $\YY\to Y$ is a birational contraction, then $M_{\YY}$ is the pullback of $M_Y$.
	\end{itemize}
	In this case, we say that the b-$\bR$-Cartier divisor $\textbf{M}$ descends on $Y$ and is represented by $M_Y$. Note that the representation is  not unique, if $\YY \to X$ is another birational contraction and $M_{\YY}$ is an $\bR$-Cartier divisor on $\YY$, then $M_Y$ and $M_{\YY}$ define the same b-$\bR$-Cartier b-divisor if the pullbacks of $M_Y$ and $M_{\YY}$ to a common resolution of $Y$ and $\YY$ are the same.
\end{definition}

\subsection{(Generalized) Pairs and Singularities}
\begin{definition}[Pairs and Singularities]
	Let $X$ be a normal quasi-projective variety and $B$ be an $\bR$-divisor on $X$. We say that $(X,B)$ is a \textit{sub-pair} if $K_X+B$ is $\bR$-Cartier. If in addition $B\geq 0$, then $(X,B)$ is a \textit{pair}.
	
	Let $D$ be a prime divisor over $X$, i.e. there is a birational model over $X$ such that $D$ is a prime divisor on this model. Let $W\to X$ be a log resolution of a sub-pair $(X,B)$ so that $D$ is a prime divisor on $W$. Let $K_W+B_W$ be the pullback of $K_X+B$. Define the \textit{log discrepancy} of the prime divisor $D$ as $1-\mu_D B_W$, where $\mu_D B_W$ means the coefficient of $D$ in $B_W$. We denote the log discrepancy of $D$ with respect to $(X,B)$ as $a(D,X,B)$.
	
	We say that a sub-pair $(X,B)$ is \textit{sub-klt} (resp. \textit{sub-lc}, \textit{sub-$\epsilon$-lc}) if $a(D,X,B)>0$ (resp. $a(D,X,B)\geq0$, $a(D,X,B)\geq \epsilon$) for every prime divisor $D$ over $X$. If $(X,B)$ is a pair, then we remove the sub and say the pair is klt (resp. lc, $\epsilon$-lc).
	
	Let $(X,B)$ be a sub-pair. A \textit{non-klt place} (resp. \textit{non-lc place}) is a prime divisor $D$ over $X$ such that $a(D,X,B)\leq 0$ (resp. $a(D,X,B)<0$). A \textit{non-klt center} (resp. \textit{non-lc center}) is the image of a non-klt place (resp. non-lc place).  The \textit{non-klt locus} (resp. \textit{non-lc locus}) of $(X,B)$ is the union of all non-klt places (resp. non-lc places) of $(X,B)$ and denoted as $\Nklt(X,B)$ (resp. $\Nlc(X,B)$).
\end{definition}

\begin{definition}[Generalized pairs and Singularities, {\cite[Definition 1.4, Definition 4.1]{effectiveIitaka}}]
	A \textit{generalized sub-pair} consists of
	\begin{itemize}
		\item a normal variety $X$ equipped with a projective morphism $X\to Z$,
		\item an $\bR$-divisor $B$ on $X$, and
		\item a b-$\bR$-Cartier b-divisor over $X$, represented by a projective birational morphism $f: \XX\to X$ and an $\bR$-Cartier $\bR$-divisor $\MM$ on $\XX$
	\end{itemize}
	such that $\MM$ is nef over $Z$ and $K_X+B+M$ is $\bR$-Cartier, where $M:=f_*\MM$. If in addition $B\geq 0$, then $(X,B+M)$ is a \textit{generalized pair}. Since a b-$\bR$-Cartier b-divisor is defined birationally, in practice we will often replace $\XX$ with a higher model and replace $\MM$ with its pullback.  In this article, we omit $Z$ but say the generalized pair is projective when $Z$ is a point.
 
	Let $D$ be a prime divisor over $X$. Replace $\XX$ with a log resolution of $(X,B)$ such that $D$ is a prime divisor on $\XX$. We can write 
	$$K_{\XX}+\BB+\MM=\pi^*(K_X+B+M).$$ 
	Then we define the \textit{generalized log discrepancy} of  $D$ to be $a(D,X,B+M)=1-\mu_D \BB$.
	
	We say that $(X,B+M)$ is \textit{generalized klt} (resp. \textit{generalized lc}, \textit{generalized $\epsilon$-lc}) if $a(D,X,B+M)>0$ (resp. $a(D,X,B+M)\geq0$, $a(D,X,B+M)\geq \epsilon$) for every prime divisor $D$ over $X$. 
\end{definition}

\subsection{Adjunction formulas for fiber spaces}
We recall the construction of adjunction formulas for fiber spaces based on \cite{kawamataadjunctionformula, ambroadjunctionformula,ambromoduli}.
Let $(X,B)$ be a projective sub-pair and let $f:X\to Z$ be a contraction between quasi-projective normal varieties with $\dim Z>0$ such that $(X,B)$ is sub-lc near the generic fiber of $f$ and $K_X+B\sim_{\bR}0/Z$. 

Fix a prime divisor $D$ on $Z$ and let $t_D$ be the lc threshold of $f^*D$ with respect to $(X,B)$ over the generic point of $D$, i.e. $t_D$ is the largest number so that $(X,B+t_Df^*D)$ is sub-lc over the generic point of $D$. Now let $b_D=1-t_D$ and by basic argument there are finitely many prime divisors $\DD$ on $Z$ such that $b_\DD\neq0$. Hence we can define $B_Z=\sum b_D D$, where the sum runs over all the prime divisors on $Z$. 

Since $K_X+B\sim_{\bR}0/Z$, there is an $\bR$-Cartier $\bR$-divisor $L_Z$ on $Z$ such that $K_X+B\sim_{\bR}f^*L_Z$. Let $M_Z=L_Z-(K_Z+B_Z)$ and we have the following \textit{adjunction formula} $$K_X+B\sim_{\bR} f^*(K_Z+B_Z+M_Z).$$ We call $B_Z$ the \textit{discriminant divisor} and $M_Z$ the \textit{moduli divisor} of $(X,B)$ with respect to $f:X\to Z$. Note that $B_Z$ is uniquely determined but $M_Z$ is determined only up to $\bR$-linear equivalence. 

Take a commutative diagram 
\begin{displaymath}
	\xymatrix{
		\XX \ar[d]_{\ff} \ar[r]^\pi  & X \ar[d]^f         \\
		\ZZ \ar[r]^\mu  & Z   }
\end{displaymath}
such that $\mu$ and $\pi$ are birational contractions. Let $K_{\XX}+\BB$ be the pullback of $K_X+B$ on $\XX$ and similarly we can define a discriminant divisor $B_{\ZZ}$ and $L_{\ZZ}=\mu^*L_Z$ gives a moduli divisor $M_{\ZZ}$ so that 
$$K_{\XX}+\BB\sim_{\bR}\ff^*(K_{\ZZ}+B_{\ZZ}+M_{\ZZ}).$$ It is easy to see that $B_Z$ is the pushdown of $B_{\ZZ}$ and $M_Z$ is the pushdown of $M_{\ZZ}$. Therefore, $B_Z$ and $M_Z$ can be regarded as b-divisors.

The following lemma shows that when $(X,B)$ is lc over the generic point of $Z$, $(Z,B_Z+M_Z)$ is a generalized pair.

\begin{lem}{\rm(Adjunction formula for $\bR$-coefficients, \cite[Theorem 2.23]{JLX22}})\label{adjunction formula for R coefficients}
	With the above notation and assumptions, suppose that $(X,B)$ is lc over the generic point of $Z$. Then $M_{\ZZ}$ is nef on some high resolution $\ZZ\to Z$, and $(X,B+M)$ is a generalized pair.
\end{lem}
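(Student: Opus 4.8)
The plan is to reduce the $\mathbb{R}$-coefficient adjunction formula to the classical $\mathbb{Q}$-coefficient canonical bundle formula of Kawamata, Ambro and Fujino--Gongyo \cite{kawamataadjunctionformula,ambroadjunctionformula,ambromoduli}, by writing $B$ as a finite convex combination of $\mathbb{Q}$-boundaries, each inducing an lc-trivial fibration over $Z$, and then summing the resulting moduli $b$-divisors.

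\textbf{Step 1: decomposition.} Using Shokurov-type rational polytopes, I would first write $B=\sum_{i=1}^{m}r_i\Delta_i$ with $r_i>0$, $\sum_i r_i=1$, where each $\Delta_i$ is a $\mathbb{Q}$-boundary on $X$ with $\Supp\Delta_i$ contained in a fixed finite set of prime divisors, $(X,\Delta_i)$ is sub-lc over the generic point of $Z$ and $\Delta_i\ge 0$ there, and $K_X+\Delta_i\sim_{\mathbb{R}}0/Z$; since $K_X+\Delta_i$ is then a $\mathbb{Q}$-Cartier $\mathbb{Q}$-divisor which is $\mathbb{R}$-trivial over $Z$, in fact $K_X+\Delta_i\sim_{\mathbb{Q}}0/Z$, so $(X,\Delta_i)\to Z$ is a genuine $\mathbb{Q}$-coefficient lc-trivial fibration. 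Moreover, applying the uniform rational polytope for adjunction \cite[Theorem 3.3]{effectiveadjunctionformulawithrealcoefficients} I would arrange the $\Delta_i$ to lie in a small enough rational polytope around $B$ so that, for every prime divisor $D\subset Z$, a single divisorial valuation over $X$ computes the lc threshold of $f^*D$ for all of $\Delta_1,\dots,\Delta_m$ simultaneously; since this threshold is affine-linear in the boundary, this forces the discriminant divisors to add up: $B_Z=\sum_i r_i B_{i,Z}$.

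\textbf{Step 2: the $\mathbb{Q}$-case and summation.} For each $i$, the $\mathbb{Q}$-coefficient canonical bundle formula provides an adjunction $K_X+\Delta_i\sim_{\mathbb{Q}}f^*(K_Z+B_{i,Z}+M_{i,Z})$ whose moduli $b$-divisor $\mathbf M_i$ is $b$-$\mathbb{Q}$-Cartier and $b$-nef --- here the generic fibre is only lc, not necessarily klt, so I would invoke the Fujino--Gongyo form of the statement. Since $B=\sum_i r_i\Delta_i$, pushing the linear equivalences to $Z$ gives $L_Z\sim_{\mathbb{R}}\sum_i r_i L_{i,Z}$ (using that $f$ is a contraction, so $f^*$ is injective modulo $\mathbb{R}$-linear equivalence), and together with $B_Z=\sum_i r_i B_{i,Z}$ from Step 1 this yields $M_Z\sim_{\mathbb{R}}\sum_i r_i M_{i,Z}$, i.e. $\mathbf M=\sum_i r_i\mathbf M_i$ as $b$-divisors (up to the usual choice of representative). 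Passing to a common high model $\mu\colon Z'\to Z$ on which every $\mathbf M_i$ descends to a nef divisor $M_{i,Z'}$, we may take $M_{Z'}:=\sum_i r_i M_{i,Z'}$, which is nef; hence $\mathbf M$ is $b$-nef and descends on $Z'$. Finally, $K_Z+B_Z+M_Z=L_Z$ is $\mathbb{R}$-Cartier, $B_Z=\sum_i r_i B_{i,Z}\ge 0$, and $\mathbf M$ is $b$-$\mathbb{R}$-Cartier and nef, so $(Z,B_Z+M_Z)$ with nef part $\mathbf M$ is a generalized pair, which is the assertion of the lemma.

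\textbf{Main obstacle.} The delicate point is Step 1: a priori one only gets the inequality $B_Z\le\sum_i r_i B_{i,Z}$ from concavity of lc thresholds, and upgrading it to equality --- equivalently, guaranteeing that the decomposition can be chosen inside a polytope over which the computing valuation is uniform --- is exactly what the uniform rational polytope for adjunction supplies. One must also be careful that the chosen $\Delta_i$ are $\mathbb{Q}$-linearly trivial over all of $Z$ and not merely over its generic point, which is why the fixed finite set of allowed components must be enlarged by suitable vertical divisors. The remaining inputs (rationality of the Shokurov polytope, the $\mathbb{Q}$-coefficient canonical bundle formula, and additivity of moduli $b$-divisors under the decomposition) are standard.
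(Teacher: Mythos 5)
The paper itself contains no proof of this lemma: it is quoted directly from \cite[Theorem 2.23]{JLX22}, so your argument can only be compared with the cited literature rather than with an internal proof. On its own terms, your strategy is the standard route (and essentially the one behind the cited result): decompose $B$ inside the rational polytope of boundaries with fixed support that are lc over $\eta_Z$ and $\bR$-trivial over $Z$, so that each $K_X+\Delta_i\sim_{\bQ}0/Z$ (this rationality is a \cite[Lemma 5.3]{HLS19}-type statement, invoked for the same purpose elsewhere in this paper), apply the $\bQ$-coefficient canonical bundle formula of \cite{ambromoduli} to each piece, and sum on a common model where all the $\mathbf{M}_i$ descend. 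You also correctly isolate the one delicate point: concavity of lc thresholds only gives $B_Z\le\sum_i r_iB_{i,Z}$, and equality must be forced by choosing the decomposition in a region where the thresholds are affine.

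Two points need care. First, the additivity of discriminants has to be an identity of b-divisors, i.e. $B_{Z'}=\sum_i r_iB_{i,Z'}$ on every birational model $Z'\to Z$, before you may write $M_{Z'}=\sum_i r_iM_{i,Z'}$ and conclude nefness. Your Step 1 only arranges linearity of the thresholds over prime divisors $D\subset Z$, and the inference in Step 2 from the trace identity on $Z$ (plus $\bR$-linear equivalence of the $L$'s) to ``$\mathbf{M}=\sum_i r_i\mathbf{M}_i$ as b-divisors'' is not valid as written, since exceptional divisors on higher models of $Z$ also carry discriminant coefficients. This is exactly what the b-divisorial form of \cite[Theorem 3.3]{effectiveadjunctionformulawithrealcoefficients} provides (and is how the present paper uses it in the proof of Theorem \ref{finite effective adjunction}), so the gap closes once you quote that statement in full, or argue on a fixed log smooth model over which the thresholds for all valuations over $Z$ are minima of finitely many affine functions. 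Second, if you rely on \cite[Theorem 3.3]{effectiveadjunctionformulawithrealcoefficients} you should note that its content is about lc thresholds and does not presuppose the $\bR$-coefficient nefness being proved, so there is no circularity; for this lemma a non-uniform, fibration-specific polytope would in any case suffice. Finally, ``$(X,B+M)$'' in the statement is a typo for ``$(Z,B_Z+M_Z)$'', and the effectivity $B_Z\ge 0$ uses effectivity of $B$ along vertical divisors rather than only over $\eta_Z$; your reading is the intended one.
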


Here we recall the M\textsuperscript{c}Kernan-Shokurov type conjecture which was proved by Birkar recently. 
\begin{lem}{\rm(\cite[Theorem 1.8]{singularitiesonFanofibrations})}\label{singularities on base}
	Let $d\in \bN$, and $u,v,\epsilon\in \bR^{>0}$. Then there exists $\delta\in \bR^{>0}$ depending only on $d,u,v,\epsilon$ satisfying the following. 
	
	Assume that 
	\begin{itemize}
		\item $(X,B)$ is an $\epsilon$-lc pair,
		\item $f: X\to Z$ is a contraction and $\dim X-\dim Z\leq d$,
		\item $K_X+B\sim_{\bR}0/Z$,
		\item $A$ is an effective $\bR$-divisor on $X$ such that $A\geq u$, and
		\item $0<\vol(A|_F)<v$ for the general fibers $F$ of $f$.
	\end{itemize}
	Then the generalized pair $(Z,B_Z+M_Z)$ given by the adjunction formula
	\begin{equation*}
		K_X+B\sim_{\bR} f^*(K_Z+B_Z+M_Z)
	\end{equation*}
	is generalized $\delta$-lc.
\end{lem}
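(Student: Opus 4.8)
The plan is to reduce the statement to an upper bound on the coefficients of the discriminant $\bR$-divisor $B_Z$, and then to prove that bound by combining boundedness of the general fibers with the theory of complements, after cutting the base down to a curve.

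First I would carry out the reduction. The hypotheses are stable under birational base change: given a birational contraction $\mu\colon Z'\to Z$, pick a resolution $\pi\colon X'\to X\times_Z Z'$ with induced $f'\colon X'\to Z'$ and let $B'$ be the crepant pullback of $B$, so $K_{X'}+B'=\pi^*(K_X+B)$ on a high enough model; then $(X',B')$ is still $\epsilon$-lc, $\dim X'-\dim Z'\le d$, $K_{X'}+B'\sim_{\bR}0/Z'$, the birational transform $A':=\pi^{-1}_*A$ satisfies $A'\ge u$ with $\vol(A'|_{F'})=\vol(A|_F)<v$ on a general fiber, and the discriminant and moduli b-divisors of $f'$ agree with those of $f$. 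Since for a prime divisor $E$ over $Z$, realized on such a $Z'$, the generalized log discrepancy $a(E,Z,B_Z+M_Z)$ equals $1$ minus the discriminant coefficient $\coeff_E(B_{Z'})$ (computed on a model over which the moduli b-divisor descends), it suffices to produce $\delta=\delta(d,u,v,\epsilon)$ with $\coeff_D(B_Z)\le 1-\delta$ for every prime divisor $D\subset Z$ and every fibration as in the statement. Cutting $Z$ by general hyperplanes and $X$ by their $f$-preimages preserves all the hypotheses (with the same $d,u,v,\epsilon$) and, by compatibility of adjunction with general hyperplane sections, does not change $\coeff_D(B_Z)$ for a divisor $D$ surviving the cuts; so I may assume $\dim Z=1$ and $D$ a closed point, and the goal becomes a lower bound for the lc threshold $t_D=1-\coeff_D(B_Z)$ of $f^*D$ with respect to $(X,B)$ over the generic point of $D$.

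Next I would use the polarization to bound the general fibers. With $\dim Z=1$, the general fiber $(F,B_F)$ is $\epsilon$-lc with $K_F+B_F\sim_{\bR}0$, $\dim F\le d$, polarized by $A_F:=A|_F$ with $A_F\ge u$ and $0<\vol(A_F)<v$; by boundedness of polarized $\epsilon$-lc log Calabi--Yau pairs with bounded polarization volume (cf.\ \cite{GOPV}) the family of such $(F,B_F),A_F$ is bounded, and in particular there is a fixed $n\in\bN$ such that each $(F,B_F)$ admits an $n$-complement $(F,B_F^+)$ with $B_F^+\ge B_F$, $n(K_F+B_F^+)\sim 0$, and $(F,B_F^+)$ lc. The core of the argument is then local over a general point of $D$: after localizing I may assume $Z$ is a smooth affine curve germ with closed point $D$ and $f^*D=\sum m_iG_i$ the scheme-theoretic special fiber. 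Using that $K_X+B\sim_{\bR}0/Z$ and that $A$ is $f$-ample (to supply the relative positivity needed to extend sections), I would lift the fiber complement to a neighbourhood of $f^*D$, producing a $\bQ$-divisor $\Gamma\ge B$ with $(X,\Gamma)$ lc near $f^*D$ and $n(K_X+\Gamma)\sim 0/Z$ near $f^*D$; running adjunction for $(X,\Gamma)\to Z$ yields a generalized $n$-complement of the associated generalized pair on $Z$ near $D$ of bounded index, so its discriminant coefficient at $D$ lies in $\tfrac1n\bZ_{\ge 0}$. Since $\Gamma\ge B$ forces $\coeff_D(B_Z)\le\coeff_D(B_{Z,\Gamma})$ while $(X,B)$ klt gives $\coeff_D(B_{Z,\Gamma})<1$, we obtain $\coeff_D(B_Z)\le 1-\tfrac1n$, and tracing back through the reductions gives the desired uniform $\delta$.

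The step I expect to be the main obstacle is this local lifting: extending a bounded complement from a general fiber to a neighbourhood of an arbitrary — possibly non-reduced and highly singular — special fiber of a crepant Calabi--Yau fibration, and converting the $\epsilon$-lc hypothesis on the total space into a uniform lower bound for the discriminant coefficient; this is precisely the technical heart of \cite{singularitiesonFanofibrations}, where the polarization $A$ is used both to bound the fibers and to make the relevant local constructions of Fano type. A secondary difficulty, unavoidable because we work with $\bR$-boundaries, is that adjunction and complements must be run in the $\bR$-coefficient setting: one perturbs $B$ within a uniform rational polytope (in the spirit of the $\bR$-coefficient effective adjunction results, cf.\ \cite{effectiveadjunctionformulawithrealcoefficients}) so that the $\bQ$-coefficient complement machinery applies and the $\tfrac1n\bZ$-rationality of the discriminant coefficients is preserved, at the cost of replacing $\tfrac1n$ by a slightly smaller but still uniform $\delta$.
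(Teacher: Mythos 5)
Your proposal does not follow the paper's route, and it has a genuine gap. The paper treats \cite[Theorem 1.8]{singularitiesonFanofibrations} as a black box and only supplies a reduction: using uniform rational polytopes (\cite[Lemma 5.3]{HLS19}) it writes $B=\sum_{i=1}^l r_iB_i$ with $\sum r_i=1$, each $(X,B_i)$ an $\frac{\epsilon}{2}$-lc $\bQ$-pair with $K_X+B_i\sim_{\bQ}0/Z$, notes the superadditivity $t_D\geq\sum_i r_it_{i,D}$ of lc thresholds over the generic point of a divisor $D$ on $Z$, and replaces $A$ by the integral divisor $A_{\red}\leq\frac{1}{u}A$, which still satisfies $0<\vol(A_{\red}|_F)\leq v/u^{\dim F}$; Birkar's theorem applied to each $(X,B_i),A_{\red}$ then gives the uniform $\delta$. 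You instead set out to re-prove Birkar's theorem itself: after your (legitimate) reductions to bounding $\coeff_D(B_Z)$ and cutting to $\dim Z=1$, the decisive step --- lifting a bounded complement from a general fiber across an arbitrary, possibly non-reduced degenerate fiber of the crepant Calabi--Yau fibration so as to get a uniform lower bound on the lc threshold $t_D$ --- is not carried out; you yourself defer it as ``the technical heart'' of the cited paper. But that step \emph{is} the content of the theorem, so your argument is effectively circular: it reduces the lemma to the very result it is meant to establish, while bypassing the clean black-box reduction that the paper actually performs.

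In addition, your sketch uses positivity of $A$ that the lemma does not assume. Here $A$ is merely effective with $A\geq u$ and $0<\vol(A|_F)<v$, so $A|_F$ is big but need not be nef, and $A$ need not be $f$-ample. Your appeal to boundedness of polarized $\epsilon$-lc log Calabi--Yau pairs (\cite{GOPV}) to produce a uniform $n$-complement on $F$ requires a nef and big polarization, and your extension of the fiber complement to a neighbourhood of the special fiber explicitly invokes $f$-ampleness of $A$; neither is available under the stated hypotheses, so even if completed your route would prove a weaker statement than the lemma. The $\bR$-coefficient issues you flag at the end are real, but they are exactly what the paper's short proof disposes of (via the decomposition of $B$ and the passage to $A_{\red}$); the missing piece in your proposal is the uniform discriminant bound itself, not the rational approximation.
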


\begin{proof}
	Since $A\geq u$, we conclude that $$0<\vol(A_{\red}|_F)\leq \vol(\frac{1}{u}A|_F)\leq \frac{v}{u^{\dim F}}$$ where $F$ is a general fiber of $f$.
	
	Using approximation, we can write $B=\sum_{i=1}^{l}r_iB_i$ such that 
	\begin{itemize}
		\item $\sum_{i=1}^{l}r_i=1$ and $r_i\geq 0$ for any $i$,
		\item $r_i$'s are $\bQ$-linear independent, and
		\item $(X,B_i)$ is an $\frac{\epsilon}{2}$-lc $\bQ$-pair for any $i$.
	\end{itemize}
	Moreover, we have $K_X+B_i\sim_{\bQ}0/Z$ by \cite[Lemma 5.3]{HLS19}. 
	
	Let $D$ be a prime divisor over $Z$.  Take a commutative diagram 
	\begin{displaymath}
		\xymatrix{
			\XX \ar[d]_{\ff} \ar[r]^\pi  & X \ar[d]^f         \\
			\ZZ \ar[r]^\mu  & Z   }
	\end{displaymath}
	such that $\mu$ and $\pi$ are birational contractions and $D$ is a prime divisor on $\ZZ$. Write $K_{\XX}+\BB=\pi^*(K_X+B)$ and $K_{\XX}+\BBi=\pi^*(K_X+B_i)$. Let $t_D$ (resp. $t_{i,D}$) be the lc threshold of $\ff^*D$ with respect to $(\XX,\BB)$ (resp. $(\XX,\BBi)$) over the generic point $\eta_D$ of $D$. Since $(\XX,\BBi+t_{i,D}\ff^*D)$ is sub-lc over $\eta_D$, $(\XX,\BB+\sum_{i=1}^l r_it_{i,D}\ff^*D)$ is also sub-lc over $\eta_D$, hence $t_D\geq \sum_{i=1}^l r_it_{i,D}$.
	
	Applying \cite[Theorem 1.8]{singularitiesonFanofibrations} to $(X, B_i), A_{\red}$, we deduce that there exists $\delta\in \bR^{>0}$ depending only on $d,u,v,\epsilon$ such that $t_{i,D}\geq \delta$. Therefore, $$t_D\geq \sum_{i=1}^lr_it_{i,D}\geq \sum_{i=1}^l r_i \delta=\delta.$$ Hence the discriminant b-divisor $\mathbf{B}_Z$ has coefficients $\leq 1-\delta$ and $(Z, B_Z+M_Z)$ is generalized $\delta$-lc.
\end{proof}

\subsection{Bounded families}
\begin{definition}[Bounded families of couples and pairs]
	A \textit{couple} consists of a projective normal variety $X$ and a reduced divisor $D$ on $X$. We say that two couples $(X,D)$ and $(\XX,\DD)$ are isomorphic if there is an isomorphism $X\to \XX$ mapping $D$ onto $\DD$. 
	
	Let $\cP$ be a set of couples. Assume that 
	\begin{itemize}
		\item there exist finitely many projective morphisms $V^i\to T^i$ of varieties,
		\item $C^i$ is a reduced divisor on $V^i$, and
		\item  for each $(X,D)\in \cP$ there exists an $i$, a closed point $t\in T^i$ and an isomorphism $\phi: V^i_t\to X$ such that $(V^i_t,C^i_t)$ is a couple and $\phi_*C^i_t\geq D$. 
	\end{itemize}
	Then we say that $\cP$ is \textit{bounded}. This is equivalent to say that there is a positive integer $r$ such that for each $(X,D)\in \cP$, we can find a very ample divisor $A$ on $X$ such that $A^{\dim X}\leq  r$ and $D\cdot A^{\dim X-1}\leq r$ (cf. \cite[Lemma 2.20]{BABI}).
	
	A set of projective pairs $(X,B)$ is said to be bounded if the set of $(X,\Supp B)$ forms a bounded family of couples.
\end{definition}

\subsection{Families of generalized pairs}
\begin{definition}[{\cite[Definition 1.1]{BVGP}}]\label{familyofgpairs}
	Let $d\in \bN$ ,$\Phi\subset \bR^{\geq 0}$, and $v\in \bR^{>0}$.
	\begin{enumerate}[itemsep=13pt]
		\item Let $\cF_{gklt}(d,\Phi)$ be the set of projective generalized pairs $(X,B+M)$ with data $\XX\to X$ and $\MM$ such that 
		\begin{itemize}
			\item $(X, B+M)$ is generalized klt of dimension $d$,
			\item $B\in \Phi$,
			\item $\MM=\sum \mu_i\MMi$ where $\MMi$ is Cartier nef and $\mu_i\in \Phi$ for any $i$, and
			\item $K_X+B+M$ is ample.
		\end{itemize}
		
		\item Let $$\cF_{gklt}(d,\Phi,v)\subseteq \cF_{gklt}(d,\Phi)$$ consist of those $(X,B+M)$ such that $\vol(K_X+B+M)=v$. Similarly, let $$\cF_{gklt}(d,\Phi,\leq v)\subseteq \cF_{gklt}(d,\Phi)$$ consist of those $(X,B+M)$ such that $\vol(K_X+B+M)\leq v$.
		
\end{enumerate}
\end{definition}

Here we give a lemma to show that if $(X,B+M)\in \cF_{gklt}(d,\Phi,v)$, then we can control the Cartier index of any $\bQ$-Cartier Weil divisor on $X$.
\begin{lem}\label{bound Cartier index}
	Let $d\in \bN$, $v\in \bR^{>0}$, and $\Phi\subset \bR^{\geq0}$ be a DCC set. Then there exists $N\in \bN$ depending only on $d,\Phi,v$ such that for any $(X,B+M)\in \cF_{gklt}(d,\Phi,v)$ and any $\bQ$-Cartier Weil divisor $D$ on $X$, the Cartier index of $D$ divides $N$.
\end{lem}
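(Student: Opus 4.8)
The plan is to deduce this from the boundedness of the family $\cF_{gklt}(d,\Phi,v)$ itself. Indeed, once we know that the generalized pairs $(X,B+M)$ with $(X,B+M)\in \cF_{gklt}(d,\Phi,v)$ form a bounded family, the underlying varieties $X$ vary in a bounded family of normal projective varieties; this is precisely the kind of boundedness statement proved in \cite[Theorem 1.5]{BVGP} (or its precursors for generalized pairs), so I will invoke that result. Thus there exist finitely many projective morphisms $\mathcal{X}^j\to T^j$ such that every such $X$ is isomorphic to a fiber $\mathcal{X}^j_t$. The point of the lemma is then a standard ``constructibility of the Cartier index'' argument over a base of finite type.

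First I would reduce to a single family $\mathcal{X}\to T$ with $T$ of finite type, $\mathcal{X}$ normal with normal fibers (after stratifying $T$ and passing to finitely many locally closed pieces, which is harmless since a finite union of allowable $N$'s can be replaced by their lcm). On such a family, the relative Weil divisor class group and its $\mathbb{Q}$-Cartier locus behave constructibly: the subsheaf of $\mathbb{Q}$-Cartier Weil divisors, and for each such divisor the minimal $m$ with $mD$ Cartier, is bounded fiberwise. Concretely, I would argue as follows. After further stratifying $T$, we may assume $\mathcal{X}\to T$ is flat with $\mathbb{Q}$-Gorenstein fibers and that there is a relative very ample divisor $\mathcal{H}$; then any Weil divisor $D$ on a fiber $X=\mathcal{X}_t$ with $D\cdot H^{d-1}$ bounded (which holds here because $H$ can be chosen in the bounded family with $H^d\le r$ and the class group rank and generator degrees are bounded) extends, up to the finitely many choices, to a relative Weil divisor on a neighborhood in $T$. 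The locus of $t$ where a given relative Weil divisor fails to be $m$-Cartier is closed, so by Noetherian induction there is a uniform $m=m(\mathcal{X}\to T)$ working for all fibers and all the relevant divisors.

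The cleanest route avoids even discussing relative class groups: use that $X$ is klt (the underlying variety of a generalized klt pair is klt), hence has a bounded Cartier index for $K_X$ by \cite{ACCLCT}-type results, but more to the point, $X$ being in a bounded family of klt varieties means — by \cite[Theorem 1.2]{HMX18} or the boundedness of klt Cartier indices established there — that there is $N$ with $N\cdot(\text{any }\mathbb{Q}\text{-Cartier Weil divisor})$ Cartier. I would state the needed input as: in a bounded family of normal projective varieties with klt singularities, the Cartier indices of $\mathbb{Q}$-Cartier Weil divisors are uniformly bounded. Applying this to the bounded family containing all the $X$ with $(X,B+M)\in \cF_{gklt}(d,\Phi,v)$, and taking $N$ to be a common multiple of the finitely many bounds coming from the finitely many families $\mathcal{X}^j\to T^j$, gives the conclusion.

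The main obstacle is that the lemma as stated presupposes the boundedness of $\cF_{gklt}(d,\Phi,v)$, which is exactly \cite[Theorem 1.5]{BVGP}; once that is in hand the rest is the constructibility/Noetherian-induction argument, which is routine but does require care in stratifying $T$ so that the fibers stay normal and $\mathbb{Q}$-Gorenstein and so that "the minimal Cartier multiple of a relative Weil divisor" is upper semicontinuous. I expect the write-up to spend most of its effort setting up that stratification cleanly and citing the right form of "bounded family of klt varieties has bounded Cartier indices"; the DCC hypothesis on $\Phi$ enters only through \cite[Theorem 1.5]{BVGP} to guarantee boundedness in the first place.
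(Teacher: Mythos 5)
Your proposal has the right general shape (reduce to a bounded family, then quote a uniform Cartier-index statement), but the step you treat as citable background is exactly the nontrivial point, and the way you fill it in does not work. The statement ``in a bounded family of normal projective \emph{klt} varieties the Cartier indices of all $\bQ$-Cartier Weil divisors are uniformly bounded'' is not what \cite{HMX18} or \cite{ACCLCT} prove, and it is not available with only ``klt'': the known result of this type, \cite[Theorem 1.10]{HLQ23}, requires the members to be $\epsilon$-lc for a \emph{fixed} $\epsilon$ together with log boundedness. For a generalized pair $(X,B+M)\in\cF_{gklt}(d,\Phi,v)$ the underlying $X$ is a priori only of klt type, with no uniform lower bound on log discrepancies; producing such a bound is itself a theorem. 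This is how the paper argues: by Step 7 of the proof of \cite[Theorem 1.4]{BVGP} there is a boundary $\Theta$ such that $(X,\Theta)$ is $\epsilon$-lc for a fixed $\epsilon=\epsilon(d,\Phi,v)$ and $(X,\Theta)$ lies in a log bounded family, and then \cite[Theorem 1.10]{HLQ23} applies verbatim. Your write-up never establishes (or even invokes) this uniform $\epsilon$-lc-ness, so the key hypothesis of the only applicable index-bounding theorem is missing. (Your citation of \cite[Theorem 1.5]{BVGP} for boundedness is also off --- that statement concerns generalized $\delta$-lc-ness and finiteness of coefficients; the boundedness is Theorem 1.4 of \cite{BVGP} --- but this is secondary.)

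The alternative ``constructibility/Noetherian induction'' route you sketch has independent gaps. The lemma quantifies over \emph{all} $\bQ$-Cartier Weil divisors $D$ on $X$; there is no bound on $D\cdot H^{d-1}$ (the class group can have free part with classes of arbitrarily large degree), so the reduction to finitely many relative divisors via bounded degrees is unjustified. Moreover, a Weil divisor on a special fiber need not extend to a relative Weil divisor over any neighborhood of the base point (already for families of smooth varieties, by Noether--Lefschetz-type phenomena), so the proposed spreading-out step fails. The correct way to see the problem is local at the singular points of $X$ --- the Cartier index of $D$ is controlled by torsion in local class groups / local index-one covers --- and bounding these uniformly is precisely where the fixed $\epsilon$ enters; it cannot be replaced by a purely constructible-in-families argument over a stratified base without that input.
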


\begin{proof}
	Let $(X,B+M)\in \cF_{gklt}(d,\Phi,v)$ and $D$ be a $\bQ$-Cartier Weil divisor on $X$. By Step 7 of the proof of \cite[Theorem 1.4]{BVGP}, there is a boundary $\Theta$ on $X$ such that $(X,\Theta)$ is $\epsilon$-lc for some positive real number $\epsilon$ depending only on $d,\Phi,v$, and  $(X,\Theta)$ belongs to a log bounded family $\cP$. By \cite[Theorem 1.10]{HLQ23}, There is a positive integer $N$ depending only on $\epsilon,\cP$, hence depending only on $d,\Phi,v$, such that the Cartier index of $D$ divides $N$.
\end{proof}

\subsection{Decomposition of $\bR$-coefficient generalized pairs in $\cF_{gklt}(d,\Phi,v)$}
In this subsection, we decompose an $\bR$-coefficient generalized pair $(X,B+M)\in\cF_{gklt}(d,\Phi,v)$ into $\bQ$-coefficient generalized pairs $(X,B_i+M_i)$ such that a bounded multiple of $K_X+B_i+M_i$ is very ample.
\begin{lem}\label{effective very ample}
	Let $d\in \bN$. Then there exists $m\in \bN$ depending only on $d$ satisfying the following. Assume that 
	\begin{itemize}
		\item $(X,B+M)$ is a generalized klt pair of dimension $d$,
		\item $L$ is an ample Cartier divisor on $X$, and
		\item $L-K_X-B-M $ is nef and big.
	\end{itemize}     
	Then $mL$ is very ample.
\end{lem}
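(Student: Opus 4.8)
The lemma is an effective very-ampleness statement for generalized klt pairs in which the bound depends only on $d$, and the natural route is to first produce an effective base-point-free multiple of $L$ and then upgrade base-point-freeness to very ampleness, keeping all constants dimensional.

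First I would record the positivity that feeds into Kollár-type effective freeness: since $L$ is ample Cartier and $L-K_X-B-M$ is nef and big, for every integer $k\ge 1$ the divisor
$$kL-(K_X+B+M)=(k-1)L+\bigl(L-K_X-B-M\bigr)$$
is nef and big, being the sum of a nef divisor and a nef and big one. Thus $L$ meets the hypotheses of the effective base-point-free theorem for generalized klt pairs; applying it (this is the generalized-pair version of Kollár's effective freeness, available through the techniques of \cite{effectiveIitaka}, and also implicit in \cite{moduliofalgebraicvarieties}; alternatively one first replaces the nef b-divisor $\mathbf{M}$ by an effective boundary $\Delta\ge 0$ with $(X,\Delta)$ klt --- a generalized klt pair has klt-type singularities --- while keeping $L-K_X-\Delta$ nef and big, and quotes Kollár's original theorem) yields $n_0=n_0(d)$ such that $|n_0 L|$ is base point free. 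Since $L$ is ample, the associated morphism is finite onto its image.

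Next I would upgrade to very ampleness. The divisor $n_0L$ is base point free and $n_0L-(K_X+B+M)$ is nef and big by the computation above, so one re-runs the Kollár-type argument with an extra local construction: to separate two (possibly infinitely near) points, take general members $H_1,H_2\in|n_0L|$ through them and apply Nadel-type vanishing for the generalized pair $\bigl(X,B+M+c(H_1+H_2)\bigr)$ at a minimal generalized non-klt centre, thereby lifting the sections of $mL$ needed to separate points and tangent vectors; since the coefficient $c$ and the number of auxiliary general members are controlled purely in terms of $d$, this gives $m=m(d)$ with $mL$ very ample. Equivalently one may quote the effective very-ampleness statement for generalized klt pairs directly, or, once a single bounded multiple of $L$ is known to be very ample, use the elementary fact that the tensor product of a very ample and a base-point-free line bundle is very ample.

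The only real obstacle is the first step: Kollár's effective base-point-free and very-ampleness theorems are classically phrased for klt pairs, so one must either cite their generalized-pair extensions or carry out the reduction to the klt case with care, the delicate point being to trade the nef b-divisor $\mathbf{M}$ for an effective auxiliary divisor while preserving the nef-and-big positivity of $L-K_X-\Delta$ for the resulting honest klt boundary $\Delta$. After that, the passage from base-point-freeness to very ampleness is routine and the dependence on $d$ alone is transparent.
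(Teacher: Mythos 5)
Your skeleton --- reduce the generalized klt pair to an honest klt pair, apply Koll\'ar's effective base-point-freeness, then upgrade to very ampleness --- is exactly the paper's, but the two steps you leave vague are handled differently there. For the reduction, the paper simply invokes \cite[Lemma 2.4]{Contractiontheoremforgeneralizedpairs}: since $L-K_X-B-M$ is nef and big, there is an effective $\bR$-divisor $\Delta$ with $(X,\Delta)$ klt and $L-K_X-\Delta$ \emph{ample}, so the ``delicate point'' you flag as the only real obstacle is a one-line citation (and it yields ampleness, more than the nef-and-bigness you ask to preserve); Koll\'ar's theorem \cite[Theorem 1.1]{effectivebasepointfree} then gives $|mL|$ base point free with $m=m(d)$. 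For the upgrade, the paper does not re-run any Angehrn--Siu/Nadel-type separation argument: it quotes \cite[Lemma 7.1]{fujinoeffective}, which says that for an ample, base-point-free Cartier divisor a dimensionally bounded multiple is already very ample --- no pair structure or vanishing theorem is needed at that stage --- so your proposed second round of Nadel-type lifting at minimal generalized non-klt centres, while workable in principle, is much heavier than necessary and would require extra care to keep the constants purely dimensional. Finally, your last alternative (``once a single bounded multiple of $L$ is known to be very ample, tensor with a base-point-free bundle'') is circular as a means of producing that first very ample multiple, so it cannot substitute for the upgrade step.
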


\begin{proof}
	By the proof of \cite[Lemma 2.4]{Contractiontheoremforgeneralizedpairs}, there exists an effective $\bR$-divisor $\Delta$ such that $(X,\Delta)$ is klt and $L-K_X-\Delta$ is ample. Then by effective base point free theorem \cite[Theorem 1.1]{effectivebasepointfree}, $|mL|$ is base point free for some positive integer $m$ depending only on $d$. By effective very ampleness lemma \cite[Lemma 7.1]{fujinoeffective}, replacing $m$ with a bounded multiple, we conclude that $mL$ is very ample.
\end{proof}

\begin{thm}\label{decomposition of ample divisor}
	Let $d\in \bN$, $v\in \bR^{>0}$, and $I\subset \bR^{\geq 0}$ be a finite set. Then there exists a finite set $J\subset \bR^{>0}$ and $p\in \bN$ depending only on $d,I,v$ satisfying the following.
	
	If $(X,B+M)\in \cF_{gklt}(d,I,v)$, then we can decompose $K_X+B+M$ as follows:
	\begin{equation*}
		K_X+B+M=\sum_{i=1}^l r_i(K_X+B_i+M_i)
	\end{equation*} 
	such that 
	\begin{itemize}
		\item $\sum_{i=1}^l r_i=1$ and $r_i\in J$ for any $i$,
		\item $(X,B_i+M_i)$ is a generalized klt pair with nef part $\MMi$ on some high resolution $\XX\to X$ for any $i$, and 
		\item $p(K_X+B_i+M_i)$ is very ample and $p\MMi$ is Cartier nef for any $i$.
	\end{itemize}
\end{thm}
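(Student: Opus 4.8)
The plan is to decompose the single ample $\mathbb{R}$-divisor $K_X+B+M$ into finitely many ample $\mathbb{Q}$-divisors that are uniformly very ample, using the finiteness of the coefficient set $I$ together with the bound on the Cartier index coming from Lemma \ref{bound Cartier index}. First I would invoke Lemma \ref{bound Cartier index}: since $(X,B+M)\in\cF_{gklt}(d,I,v)$ and $I$ is a DCC set (being finite), there is a fixed $N\in\bN$, depending only on $d,I,v$, so that $N$ times any $\bQ$-Cartier Weil divisor on $X$ is Cartier. Writing $B=\sum a_jB_j$ and $\MM=\sum\mu_k\MMi$ with $a_j,\mu_k\in I$, I would fix the finite-dimensional $\bQ$-vector space $V$ spanned over $\bQ$ by the $a_j$, $\mu_k$ and $1$ inside $\bR$, and consider the rational affine subspace of $\bR^{(\text{components})}\times\bR^{(\text{indices of }\MM)}$ consisting of all choices of coefficients for which $K_X+B'+M'$ stays generalized klt and ample; the point $(a_j,\mu_k)$ lies in this polytope, and by standard ACC/rational-polytope arguments (as in the proof of \cite[Theorem 1.4]{BVGP} or via boundedness of $\cF_{gklt}(d,I,v)$) this polytope is rational and its defining data are uniformly bounded.

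Next I would take a simplex $\Delta$ inside that rational polytope with rational vertices $P_1,\dots,P_l$ whose coordinates all have bounded denominators (bounded by a fixed $q\in\bN$ depending only on $d,I,v$), containing the point $P_0=(a_j,\mu_k)$ in its interior, and write $P_0=\sum_{i=1}^l r_iP_i$ with $r_i\ge 0$, $\sum r_i=1$; the $r_i$ can be taken in a fixed finite set $J$ (they are rational with denominator dividing a fixed integer determined by the $P_i$ and $P_0$, which lie in the bounded lattice $\frac1q\cdot(\text{bounded set})$). Each vertex $P_i$ gives a $\bQ$-coefficient generalized pair $(X,B_i+M_i)$ with nef part $p_0\MMi$ Cartier for a fixed $p_0$ (a common multiple of $N$ and the vertex denominators), $K_X+B_i+M_i$ ample $\bQ$-Cartier, and $K_X+B+M=\sum r_i(K_X+B_i+M_i)$ by construction. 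I also get $\vol(K_X+B_i+M_i)$ bounded above by some fixed $v'$: this follows because $K_X+B_i+M_i\le \tfrac1{r_i}(K_X+B+M)+(\text{bounded correction})$ is dominated, on the bounded family $\cP$ from the proof of Lemma \ref{bound Cartier index}, by a bounded ample divisor, or alternatively because the $(X,B_i+M_i)$ themselves lie in $\cF_{gklt}(d,I',v')$ for a fixed finite $I'\supseteq I\cup J\cdot I$ and $v'$, hence form a bounded family by \cite[Theorem 1.4]{BVGP}.

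With $K_X+B_i+M_i$ a fixed multiple $1/p_0$ of an ample Cartier divisor $L_i'$ on a member of a bounded family, there is a fixed $c\in\bN$ (e.g. from effective base point freeness on the bounded family, cf.\ the proof of Lemma \ref{effective very ample}) such that $c L_i'$ is very ample; I would then set $p=p_0 c$, so that $p(K_X+B_i+M_i)=cL_i'$ is very ample and $p\MMi$ is Cartier nef. Finally I would collect the finite data: the finite set $J$ is the (finite, uniformly bounded) set of possible barycentric coordinates $r_i$, and $p$ is as above, both depending only on $d,I,v$.

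The main obstacle I expect is producing the \emph{uniform} rational simplex $\Delta$ and the resulting finiteness of the coefficient set $J$ and the integer $p$: one must argue that, as $(X,B+M)$ ranges over $\cF_{gklt}(d,I,v)$, the relevant rational polytope of coefficients (ensuring generalized klt-ness and ampleness) has denominators bounded independently of $X$ — this is where the boundedness theorem \cite[Theorem 1.4]{BVGP} and the Cartier-index bound of Lemma \ref{bound Cartier index} are essential, since they let one pull the combinatorial data back to a finite list of bounded families and apply ACC-type rationality (as in \cite[Theorem 1.4]{BVGP}) uniformly. The rest — barycentric decomposition, multiplying through by a common denominator, and applying effective very ampleness — is routine once the uniform rational structure is in place.
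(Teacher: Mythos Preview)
Your overall architecture matches the paper's: start from a decomposition into $\bQ$-coefficient generalized klt pairs, use Lemma \ref{bound Cartier index} to control Cartier indices, and finish with Lemma \ref{effective very ample} to get uniform very ampleness. The difference, and the gap, is in how you arrange for the pieces $K_X+B_i+M_i$ to be \emph{ample} with uniformly bounded data.

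You propose to take a rational simplex $\Delta$, with vertex denominators bounded by a fixed $q$, inside the region of coefficients where $(X,B'+M')$ is generalized klt \emph{and} $K_X+B'+M'$ is ample, and you assert that ``this polytope is rational and its defining data are uniformly bounded'' by appeal to boundedness of $\cF_{gklt}(d,I,v)$ and ACC-type rationality. This is not justified. The klt condition alone cuts out a rational polytope, but the ample condition is governed by the nef cone of $X$, which is not in general rational polyhedral and whose facet structure need not be uniformly bounded even across a bounded family. Nothing in \cite[Theorem 1.4]{BVGP} gives you uniform control of the ample cone; it gives boundedness of the underlying varieties, which is a much weaker statement. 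Without such control you cannot choose $\Delta$ from a finite list, and then your claim that the barycentric coordinates $r_i$ lie in a fixed finite set collapses. (Incidentally, the $r_i$ are not rational when $I$ contains irrationals and the $P_i$ are rational; they lie in the $\bQ$-span of $I\cup\{1\}$, so finiteness must come from finiteness of the simplex choices, not from a denominator bound.)

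The paper avoids this obstacle entirely. It first invokes \cite[Theorem 3.15]{CGD20} to produce a decomposition $K_X+B+M=\sum r_i(K_X+B_i+M_i)$ with $r_i$ in a fixed finite set and $(X,B_i+M_i)$ generalized klt, \emph{without} asking for ampleness. Then, using the length of extremal rays for generalized pairs (so $(K_X+B_i+M_i)\cdot C\ge -2d$ on extremal curves) together with the fact that $p(K_X+B_i+M_i)$ is Cartier, it shows that $(K_X+B+M)\cdot C$ is bounded below by a fixed $\delta>0$ on every extremal curve. This yields an explicit, uniform convex combination
\[
\tfrac{\delta}{2d+\delta}(K_X+B_i+M_i)+\tfrac{2d}{2d+\delta}(K_X+B+M)
\]
that is nef, and a further uniform perturbation makes it ample. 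Only then are Lemma \ref{bound Cartier index} and Lemma \ref{effective very ample} applied. The extremal-ray argument is precisely the missing ingredient that replaces your unproven uniform rationality of the ample region.
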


\begin{proof}
	Pick a generalized pair $(X,B+M)\in  \cF_{gklt}(d, I,v)$ with data $\XX\to X$ and $\MM$. By \cite[Theorem 3.15]{CGD20} we can write
	$$K_X+B+M=\sum_{i=1}^l r_i(K_X+B_i+M_i)$$ such that
	\begin{itemize}
		\item $\sum_{i=1}^l r_i=1$ and $r_i\in J$, where $J\subset \bR^{> 0}$ is a finite set depending only on  $d, I$,
		\item $(X,B_i+M_i)$ is a generalized klt pair with nef part $\MMi$ on $\XX$ for any $i$, 
		\item $\MM=\sum_{i=1}^l r_i \MMi$, and
		\item there exists a positive integer $p$ depending only on $d, I$ such that $p(K_X+B_i+M_i)$ is integral, and $p\MMi$ is Cartier nef for any $i$.
	\end{itemize}
	Note that $l\leq N:=\frac{1}{\min J}$, because $\sum_{i=1}^l r_i=1$. By Lemma \ref{bound Cartier index}, replacing $p$ with a bounded multiple we can assume that $p(K_X+B_i+M_i)$ is Cartier. Now $p$ depends on $d, I,v$.
	
	Consider the set $$\Lambda:=\{\sum_{i=1}^h \frac{\nu_in_i}{p} | n_i\geq -2dp, n_i\in \bZ, \nu_i\in J, h\leq N\}.$$ It is easy to see that $\Lambda$ is a DCC set, hence we can take $\delta:=\min \{\alpha>0| \alpha\in \Lambda\}$. 
	
	If $K_X+B_i+M_i$ is not nef for some $i$, let $R$ be a $(K_X+B_i+M_i)$-negative extremal ray. Let $C$ be an extremal curve of $R$, which means that there exists an ample divisor $H$ such that $$H\cdot C=\min\{H\cdot \Gamma|\;[\Gamma]\in R\}.$$ By the length of extremal ray for generalized pairs \cite[Proposition 3.13, Lemma 3.5]{weakzariskidecomposition}, $$(K_X+B_i+M_i)\cdot C\geq -2d$$ for $1\leq i\leq l$. Since $p(K_X+B_i+M_i)$ is Cartier, we deduce that $p(K_X+B_i+M_i)\cdot C$ is an integer in $[-2dp,\infty)$. Since $K_X+B+M$ is ample and
	\begin{align*}
		(K_X+B+M)\cdot C&=\sum_{i=1}^l r_i(K_X+B_i+M_i)\cdot C\\
		&=\sum_{i=1}^l \frac{r_i}{p} (p(K_X+B_i+M_i)\cdot C)\in \Lambda,
	\end{align*}
	hence $(K_X+B+M)\cdot C\geq \delta$. Therefore,
	$$\left( \frac{\delta}{2d+\delta}(K_X+B_i+M_i)+\frac{2d}{2d+\delta}(K_X+B+M) \right) \cdot C\geq 0.$$
	This argument implies that  $$\frac{\delta}{2d+\delta}(K_X+B_i+M_i)+\frac{2d}{2d+\delta}(K_X+B+M)$$ is nef.
	Now we take $$K_X+\tilde{B_i}+\tilde{M_i}:=\frac{4d+\delta}{4d+2\delta}(K_X+B+M)+\frac{\delta}{4d+2\delta}(K_X+B_i+M_i),$$ then $$K_X+B+M=\sum_{i=1}^l r_i(K_X+\tilde{B_i}+\tilde{M_i}).$$
	 Since $K_X+B+M$ is ample, we conclude that $K_X+\tilde{B_i}+\tilde{M_i}$ is also ample. Consider the convex hull $\mathcal{H}$ spanned by $K_X+\tilde{B_i}+\tilde{M_i}$, i.e. $$\mathcal{H}:=\{\sum_{i=1}^l \lambda_i(K_X+\tilde{B_i}+\tilde{M_i})| \lambda_i\geq 0, \sum_{i=1}^l\lambda_i=1\}.$$ Since $K_X+B+M$ is an interior point of $\mathcal{H}$, we can choose $K_X+\overline{B}_i+\overline{M}_i\in \mathcal{H}$ such that
	\begin{itemize}
		\item $K_X+B+M=\sum_{i=1}^l \overline{r}_i(K_X+\overline{B}_i+\overline{M}_i)$, where $\overline{r}_i$ belongs to a finite set $\overline{J}\subset\bR^{>0}$ depending only on $J, d, \delta$, hence depending only on $d,I,v$,
		\item $(X,\overline{B}_i+\overline{M}_i)$ is a generalized klt pair with nef part $\overline{M}^{\prime}_i$ on $\XX$ for each $i$,
		\item $(K_X+\overline{B}_i+\overline{M}_i)$ is ample for each $i$, and
		\item there exists a positive integer $\overline{p}$ depending only on $J, d, \delta, p$, hence depending only on $d,I,v$ such that $\overline{p}(K_X+\overline{B}_i+\overline{M}_i)$ is integral and $\overline{p}\overline{M}^{\prime}_i$ is Cartier nef for any $i$.
	\end{itemize}
	By Lemma \ref{bound Cartier index}, replacing $\overline{p}$ with a bounded multiple we can assume that $\overline{p}(K_X+\overline{B}_i+\overline{M}_i)$ is Cartier. By Lemma \ref{effective very ample}, replacing $\overline{p}$ again with a bounded multiple we can assume that $\overline{p}(K_X+\overline{B}_i+\overline{M}_i)$ is very ample.  Now replace $r_i$ with $\overline{r}_i$, $J$ with $\overline{J}$, $(X,B_i+M_i)$ with $(X,\overline{B}_i+\overline{M}_i)$, and $p$ with $\overline{p}$, and then we finish the proof.
\end{proof}

\subsection{Relative nefness}
\begin{definition}\label{relatively nefness}
	Let $s, k$ be integers such that $0\leq k\leq s$. Let $f:X\to Z$ be a contraction between projective normal varieties with $\dim Z=s$. Let $(X,B)$ be a pair.  We say $K_X+B$ is \textit{nef in dimension $k$ over $Z$}, if for any  very ample divisors $H_1, \cdots, H_{s-k}$ on $Z$, the following is satisfied.
	
	Let $L_i$ be  a general member of $|H_i|$ and $R_i=f^{-1}L_i$. Take $S=\cap_{i=1}^{s-k}R_i$ and write $$K_S+B_S=(K_X+B+\sum_{i=1}^{s-k}R_i)|_S,$$  then $K_S+B_S$ is nef. 	
	
	Note that if $K_X+B$ is nef over $Z$, then $K_X+B$ is nef in dimension $0$ over $Z$.
\end{definition}

The following lemma will be used in the proof of Theorem \ref{mainthm1}. 

\begin{lem}\label{nef in dimension}
	Let $d, s, k$ be integers such that $0\leq k\leq s\leq d$. Assume that 
	\begin{itemize}
		\item  $f: X \to Z$ is a contraction between projective normal varieties with $\dim X=d$ and $\dim Z=s$,
		\item  $(X,B)$ is a pair and $K_X+B$ is nef in dimension $k$ over $Z$,
		\item $H_1, \cdots,H_{s-k}$ are very ample divisors on $Z$, and
		\item  $N$ is a nef $\bR$-divisor on $X$.
	\end{itemize}
	Then $$(K_X+B+N+\sum_{i=1}^{s-k}f^*H_i)^{d-s+k}\cdot f^*H_1 \cdot \ldots \cdot f^*H_{s-k}\geq0.$$
\end{lem}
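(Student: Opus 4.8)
The plan is to reduce the asserted inequality to the elementary fact that a nef $\bR$-divisor on a projective variety has nonnegative top self-intersection, by passing to the complete intersection $S$ appearing in Definition \ref{relatively nefness}. Write $D := K_X+B+N+\sum_{i=1}^{s-k} f^*H_i$, which is $\bR$-Cartier since $K_X+B$ is $\bR$-Cartier, $N$ is a nef (hence $\bR$-Cartier) $\bR$-divisor, and the $f^*H_i$ are Cartier; so the intersection number in question is well defined.

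For each $i$ with $1\le i\le s-k$ I would choose a general member $L_i\in|H_i|$ and set $R_i=f^{-1}L_i=f^*L_i$, so that $R_i\sim f^*H_i$, and put $S=\bigcap_{i=1}^{s-k}R_i$, with $K_S+B_S=(K_X+B+\sum_{i=1}^{s-k}R_i)|_S$ as in that definition. Since the $H_i$ are very ample and the $L_i$ are general, Bertini's theorem together with upper semicontinuity of fibre dimension shows that $L_1\cap\cdots\cap L_{s-k}$ is reduced of pure dimension $k$ and meets properly the closed locus of $Z$ over which $f$ has fibres of dimension $>d-s$; consequently $S$ is reduced of pure dimension $d-s+k$, the $R_i$ meet properly, and $R_1\cdot\ldots\cdot R_{s-k}=[S]$ as cycles. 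Using $f^*H_i\sim R_i$ and the projection formula for the closed immersion $S\hookrightarrow X$, one then gets
\begin{equation*}
	\bigl(K_X+B+N+\textstyle\sum_{i=1}^{s-k} f^*H_i\bigr)^{d-s+k}\cdot f^*H_1 \cdot \ldots \cdot f^*H_{s-k} = (D|_S)^{d-s+k}.
\end{equation*}
Now I would identify $D|_S$: on $S$ we have $\sum_{i=1}^{s-k}f^*H_i|_S=\sum_{i=1}^{s-k}R_i|_S$, so by the defining identity for $B_S$ it follows that $D|_S=(K_S+B_S)+N|_S$. Here $K_S+B_S$ is nef on $S$ because $K_X+B$ is nef in dimension $k$ over $Z$ and $H_1,\dots,H_{s-k}$ are precisely the very ample divisors appearing in that hypothesis, while $N|_S$ is nef as the restriction of the nef $\bR$-divisor $N$. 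A sum of nef $\bR$-divisors is nef, so $D|_S$ is nef on the projective variety $S$; applying, on each irreducible component of $S$ (all of dimension $d-s+k$), the fact that the top self-intersection of a nef $\bR$-divisor on a projective variety is nonnegative, we obtain $(D|_S)^{d-s+k}\ge 0$, which is the claim. (When $s=k$ there are no $R_i$, $S=X$, and the statement degenerates to $(K_X+B+N)^d\ge 0$ with $K_X+B+N$ nef.)

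I expect the only non-formal point to be the general-position bookkeeping in the second step: ensuring that for general $L_i$ the scheme $S$ has exactly the expected dimension $d-s+k$, that $R_1\cdot\ldots\cdot R_{s-k}=[S]$, and that no component of $S$ lies in $\Supp N$, so that both the reduction of the intersection number to $(D|_S)^{d-s+k}$ and the restriction $N|_S$ are legitimate. All of this is a routine consequence of Bertini and upper semicontinuity of fibre dimension (one chooses the $L_i$ to avoid, or to meet properly, the finitely many bad loci in $Z$); once it is in place, the remainder of the argument is purely formal.
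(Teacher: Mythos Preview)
Your proposal is correct and follows essentially the same approach as the paper's own proof: choose general $L_i\in|H_i|$, pass to the complete intersection $S=\bigcap R_i$, identify the intersection number with $(K_S+B_S+N|_S)^{d-s+k}$, and conclude by nefness. The paper is terser about the general-position bookkeeping, but the argument is identical; one small notational point is that your equality $\sum f^*H_i|_S=\sum R_i|_S$ should be a linear equivalence rather than an equality of divisors, though this is immaterial for the intersection computation.
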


\begin{proof}
	By definition, if we take $L_i$ as a general member of $|H_i|$, $R_i=f^{-1}L_i$ and $S=\cap_{i=1}^{s-k}R_i$, then $$K_S+B_S=(K_X+B+\sum_{i=1}^{s-k}R_i)|_S$$ is nef, hence $K_S+B_S+N_S$ is nef, where $N_S=N|_S$. Therefore,
	\begin{align*}
		&(K_X+B+N+\sum_{i=1}^{s-k}f^*H_i)^{d-s+k}\cdot f^*H_1 \cdot \ldots \cdot f^*H_{s-k}\\
		=&(K_X+B+N+\sum_{i=1}^{s-k}f^*L_i)^{d-s+k}\cdot f^*L_1 \cdot \ldots \cdot f^*L_{s-k}\\
		=&(K_X+B+N+\sum_{i=1}^{s-k}R_i)^{d-s+k}\cdot R_1 \cdot \ldots \cdot R_{s-k}\\
		=&(K_S+B_S+N_S)^{d-s+k}\geq 0.
	\end{align*}
\end{proof}

\section{Effective adjunction formula with real coefficients}
In this section we extend the effective adjunction formula \cite[Lemma 7.4]{BVGP} to the real coefficients case. The main new tool is uniform rational polytopes for canonical bundle formulas developed in \cite{effectiveadjunctionformulawithrealcoefficients}. The effective adjunction formula is one of the main ingredients in the proof of Theorem \ref{mainthm1} and Theorem \ref{singularities of klt stable minimal models}.

\subsection{Cartier index of moduli divisors} Given $q\in \bN$ and two $\bR$-divisors $C$, $D$ on a normal variety $X$, if $qC\sim qD$, then we write $C\sim_q D$.

\begin{lem}\label{moduli part Cartier}
	Let $d, q\in \bN$, $u\in \bR^{>0}$ and $\Phi\subset \bR^{\geq0}$ be a DCC set. Then there exists $p\in \bN$ depending only on $d, q, u, \Phi$ satisfying the following. Assume that 
	\begin{itemize}
		\item $(X,B)$ is a projective lc pair of dimension $d$,
		\item $f: X \to Z$ is a contraction with $K_X+B\sim_{\bQ}0/Z$,
		\item we have an adjunction formula
		\begin{equation*}
			K_X+B\sim_q f^*(K_Z+B_Z+M_Z),
		\end{equation*}
		\item $A\in \Phi$ is an effective $\bR$-divisor on $X$,
		\item over some non-empty open subset $U\subseteq Z$: $(X,B+tA)$ is lc for some $t>0$, and $A$ is relatively semi-ample, and 
		\item for the general fiber $F$ of $f$, we have $0<\vol(A|_F)\leq u$.
	\end{itemize} 
	Then $pM_{\ZZ}$ is Cartier on some high resolution $\ZZ\to Z$.
\end{lem}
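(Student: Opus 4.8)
The plan is to reduce to the $\mathbb{Q}$-coefficient effective adjunction formula \cite[Lemma 7.4]{BVGP} by passing to a suitable approximation of $A$, using the uniform rational polytope for canonical bundle formulas from \cite{effectiveadjunctionformulawithrealcoefficients} to control the discriminant part, and then exploiting that the moduli b-divisor $M_{\ZZ}$ does not actually depend on $A$ at all (it only depends on $(X,B)$ and $f$). First I would observe that the hypotheses on $A$ — that $(X,B+tA)$ is lc and $A$ is relatively semi-ample over some open $U$, and that $0<\vol(A|_F)\leq u$ for a general fiber $F$ — are only needed to invoke the boundedness input coming from \cite[Lemma 7.4]{BVGP}, i.e.\ to conclude that $Z$ together with $(Z,B_Z+M_Z)$ lives in an appropriate family; the actual Cartier-index statement is about $M_{\ZZ}$, which is the moduli part of the canonical bundle formula for $(X,B)$ alone.

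The key steps, in order, would be: \emph{(1)} Write $B=\sum_{j=1}^{m} s_j B_j$ as a convex combination where the $s_j$ are $\mathbb{Q}$-linearly independent and each $(X,B_j)$ is an lc $\mathbb{Q}$-pair with $K_X+B_j\sim_{\mathbb{Q}}0/Z$; this is possible by \cite[Lemma 5.3]{HLS19}, exactly as in the proof of Lemma \ref{singularities on base} above. \emph{(2)} Apply the uniform rational polytope result \cite[Theorem 3.3]{effectiveadjunctionformulawithrealcoefficients} to the pair $(X,B)$ and the contraction $f$: this gives that, after shrinking the region of coefficients, the discriminant divisor $B_Z$ and hence the identity $M_Z = L_Z - (K_Z + B_Z)$ vary linearly/affinely in the coefficients, so that $M_{Z}$ for $(X,B)$ is a convex combination $M_Z = \sum_j s_j M_{j,Z}$ of the moduli divisors $M_{j,Z}$ coming from the $\mathbb{Q}$-pairs $(X,B_j)$. \emph{(3)} For each $j$, apply the $\mathbb{Q}$-coefficient version \cite[Lemma 7.4]{BVGP} to the klt stable minimal model data $(X,B_j), A_{\red}$ (the $\epsilon$-lc and volume bounds pass to $A_{\red}$ since $A\in\Phi$ with $\Phi$ DCC, as in the proof of Lemma \ref{singularities on base}) to get a single $p_0\in\mathbb{N}$, depending only on $d,q,u,\Phi$, with $p_0 M_{j,\ZZ}$ Cartier on a common high resolution $\ZZ\to Z$ for every $j$. \emph{(4)} Choose $\ZZ$ high enough that all $M_{j,\ZZ}$ descend there; then on $\ZZ$ we have $M_{\ZZ} = \sum_j s_j M_{j,\ZZ}$, and since the $s_j$ need not be rational this is not yet Cartier. \emph{(5)} To finish, observe that $M_{\ZZ}$ is itself a $\mathbb{Q}$-divisor: indeed the adjunction formula $K_X+B\sim_q f^*(K_Z+B_Z+M_Z)$ is given with integer coefficient $q$, so $q M_{\ZZ}$ is $\mathbb{Q}$-linearly equivalent (in fact $\mathbb{Z}$-linearly, up to the chosen representative) to an integral divisor; combining the two representations, $M_{\ZZ}$ is a $\mathbb{Q}$-Weil divisor whose support is contained in the union of the supports of the $M_{j,\ZZ}$. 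Its denominators are bounded: each coefficient of $q M_{\ZZ}$ is an integer, and comparing with $\sum_j s_j \cdot (\text{coeff of } M_{j,\ZZ})$ forces the coefficients of $M_{\ZZ}$ to lie in $\frac{1}{q}\mathbb{Z}$, so taking $p = \lcm(q, p_0)$ (or a bounded multiple thereof, invoking Lemma \ref{bound Cartier index} on $Z$ if one wants Cartier rather than merely $\mathbb{Q}$-Cartier with controlled index) gives $p M_{\ZZ}$ Cartier.

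The main obstacle I expect is Step (2): making precise that the moduli divisor varies \emph{affinely} in the boundary coefficients over the whole rational polytope, with a \emph{uniform} choice of polytope and of the auxiliary data, so that the decomposition $M_Z=\sum_j s_j M_{j,Z}$ holds on the nose rather than merely up to $\mathbb{R}$-linear equivalence. This is exactly what \cite[Theorem 3.3]{effectiveadjunctionformulawithrealcoefficients} is designed to supply, but one must be careful that the open set $U$, the relative semi-ampleness of $A$, and the volume bound $\vol(A|_F)\leq u$ interact correctly with the polytope — in particular that $A_{\red}$ still satisfies the hypotheses of \cite[Lemma 7.4]{BVGP} for every vertex pair $(X,B_j)$ simultaneously. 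A secondary technical point is bookkeeping the dependence of $p$: $p_0$ from \cite[Lemma 7.4]{BVGP} depends on $d,q,u$ and the DCC set controlling the coefficients of the $B_j$ and of $A$, which are themselves controlled in terms of $\Phi$ by the finiteness coming from the uniform polytope, so the final $p$ indeed depends only on $d,q,u,\Phi$.
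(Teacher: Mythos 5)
There is a genuine gap, and it sits at the two places where your argument tries to bypass the actual content of the lemma. First, in Step (3) you replace $A$ by $A_{\red}$ in order to quote \cite[Lemma 7.4]{BVGP} with an integral polarization, but the hypothesis of that lemma (mirrored in the statement here) is that $A$ is \emph{relatively semi-ample} over the open set $U$; semi-ampleness is not inherited by the reduced divisor (a component of $A$ can be, say, negative on curves even though the weighted combination $A$ is semi-ample), so $(X,B_j),A_{\red}$ need not satisfy the hypotheses of \cite[Lemma 7.4]{BVGP} at all. The appeal to the proof of Lemma \ref{singularities on base} does not help: there the input \cite[Theorem 1.8]{singularitiesonFanofibrations} only needs $A\geq u$ and a volume bound, with no positivity of $A_{\red}$, which is why passing to $A_{\red}$ is harmless in that proof but not here. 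The actual difficulty the lemma addresses is precisely that $A$ has DCC real coefficients, and the paper's proof keeps $A$ untouched: it reduces, as in \cite[Lemma 7.4]{BVGP}, to the case of a curve base and then re-runs the proof of \cite[Lemma 7.3]{BVGP} with the $\bR$-coefficient replacements \cite[Theorem 1.2]{Hashizume19} for \cite[Theorem 1.1]{HX13} and \cite[Theorem 6.4]{GOPV} for \cite[Theorem 1.7]{GOPV}. (Decomposing $B$ as in your Steps (1)--(2) attacks the wrong variable: in the only place the lemma is used, namely Theorem \ref{finite effective adjunction}, $B$ is already a $\bQ$-divisor with $q(K_X+B)$ integral.)

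Second, and more seriously, Step (5) is circular. From $K_X+B\sim_q f^*(K_Z+B_Z+M_Z)$ you cannot conclude that the coefficients of $qM_{\ZZ}$ are integers. That relation constrains $q(K_Z+B_Z+M_Z)$ on $Z$ only up to linear equivalence and up to the multiplicities of $f^*D$, and in any case the divisor you need to control is $M_{\ZZ}=\mu^*(K_Z+B_Z+M_Z)-K_{\ZZ}-B_{\ZZ}$ on the high resolution where $\mathbf{M}$ descends: its coefficients involve the discriminant $B_{\ZZ}$, i.e.\ $1$ minus lc thresholds over divisors on $\ZZ$ (including exceptional ones), whose denominators are a priori unbounded even when $B$ is rational, and possibly irrational when it is not. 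Bounding exactly these denominators, via boundedness of the general fibers supplied by $A$, $u$ and $\Phi$, is the whole point of \cite[Lemmas 7.3, 7.4]{BVGP} and of this lemma; your Step (5) assumes the conclusion. Nor does the convex combination $M_{\ZZ}=\sum_j s_jM_{j,\ZZ}$ with irrational $s_j$ force $M_{\ZZ}$ to be a $\bQ$-divisor with denominators dividing $q$, and Lemma \ref{bound Cartier index} is not applicable ($Z$ here carries no ample $K_Z+B_Z+M_Z$ with fixed volume in a family $\cF_{gklt}(d,\Phi,v)$, and the issue is on $\ZZ$, not on $Z$). So as written the proposal does not establish the bounded Cartier index.
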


\begin{proof}
	We follow the proof of \cite[Lemma 7.4]{BVGP}.
	\begin{enumerate} [label=\textsl{Step} \arabic{enumi}., wide=13pt, itemsep=13pt]
		\item Take a resolution $\ZZ\to Z$ so that $\textbf{M}$ descends to $\ZZ$, and a log resolution $W\to X$ of $(X,B)$ such that $W\dasharrow \ZZ$ is a morphism. Let $\Delta_W$ be the sum of the horizontal/$\ZZ$ part of reduced exceptional divisors and the birational transform of the horizontal/$Z$ part $B^h$ of $B$. After finitely many blow ups and changing $\Delta_W$ accordingly, we can assume that every non-klt center of $(W,\Delta_W)$ is horizontal over $\ZZ$.
		
		Run an MMP on $K_W+\Delta_W$ over $X$ with scaling of some ample divisor. Since over the generic point $\eta_Z$ of $Z$, $K_W+\Delta_W$ is the sum of the pullback of $K_X+B^h$ and an effective exceptional divisor, the MMP terminates over $\eta_Z$ by \cite[Theorem 1.8]{Bir12}. Thus we get a model $(V,\Delta_V)$ on which $K_V+\Delta_V \sim_{\bQ}0$ over $\eta_Z$, and hence over the generic point $\eta_{\ZZ}$ of $\ZZ$. Applying \cite[Theorem 1.4]{Bir12} or \cite[Theorem 1.1]{HX13}, we can run an MMP on $K_W+\Delta_W$ over $\ZZ$ ending with a good minimal model. Replacing $V$ with the good minimal model of $(W,\Delta_W)$ over $\ZZ$, we can assume that $K_V+\Delta_V$ is semi-ample over $\ZZ$ and induces a contraction $f^{\prime}: V\to \ZZZ /\ZZ$. 
		
		\item  Take a common resolution $\pi: Y\to X$ and $\pi^\prime: Y\to V$. Then 
		\begin{equation*}
			P_Y:= \pi^{\prime *}(K_V+\Delta_V)-\pi^*(K_X+B)
		\end{equation*}
		is vertical over $\ZZZ$. In addition, since $K_X+B\sim_{\bQ}0/Z$ and $K_V+\Delta_V\sim_{\bQ}0/\ZZZ$, we conclude that $P_Y\sim_{\bQ}0/\ZZZ$, which implies that $P_Y$ is the pullback of an $\bQ$-Cartier $\bQ$-divisor $P_{\ZZZ}$ by \cite[Lemma 2.5]{CHL23}. The adjunction formula 
		\begin{equation*}
			K_X+B\sim_q f^*(K_Z+B_Z+M_Z)
		\end{equation*}
		induces the following adjunction formula 
		\begin{equation*}
			(\ast) \quad K_V+\Delta_V\sim_q f^{\prime*}(K_{\ZZZ}+B_{\ZZZ}+P_{\ZZZ}+M_{\ZZZ}),
		\end{equation*}
		where $K_{\ZZZ}+B_{\ZZZ}+M_{\ZZZ}$ is the pullback of $K_Z+B_Z+M_Z$ on $\ZZZ$.  From $(\ast)$ we see that the discriminant divisor $\Delta_{\ZZZ}$ of $(V,\Delta_V)$ over $\ZZZ$ is $B_{\ZZZ}+P_{\ZZZ}$ and the moduli divisor of $(X,B)\to Z$ on $\ZZZ$ coincides with the moduli divisor of $(V,\Delta_V)\to \ZZZ$. 
		
		\item We claim that there exists $p\in \bN$ depending only on $d, q,  u, \Phi$ such that $pM_{\ZZZ}$ is integral. Then $pM_{\ZZ}$ is integral and hence Cartier because $\ZZ$ is smooth.
		
		 As in the fifth paragraph of the proof of \cite[Lemma 7.4]{BVGP}, we can reduce to the case where $Z$ and $\ZZZ$ are curves. Then the claim follows from \cite[Lemma 7.3]{BVGP}. Note that in \cite[Lemma 7.3]{BVGP}, $A$ is an effective integral divisor on $X$. In our case, the coefficients of $A$ are in a DCC set of positive real numbers.  The proof of \cite[Lemma 7.3]{BVGP} can still be applied with some slight changes. In Step 2 of the proof of \cite[Lemma 7.3]{BVGP}, replace \cite[Theorem 1.1]{HX13} with \cite[Theorem 1.2]{Hashizume19}, which is an $\bR$-version of \cite[Theorem 1.1]{HX13}, and in Step 5, replace \cite[Theorem 1.7]{GOPV} with \cite[Theorem 6.4]{GOPV}.		
	\end{enumerate}
\end{proof}

\subsection{Finite coefficients case}

We first give an effective adjunction formula in the case that the coefficients of $B$ are in a fixed finite set of real numbers.
\begin{thm}\label{finite effective adjunction}
	Let $d\in \bN$, $u\in \bR^{>0}$, $I\subset \bR^{\geq0}$ be a finite set, and $\Phi\subset \bR^{\geq0}$ be a DCC set. Then there exists a finite set $J \subset \bR^{\geq0}$ depending only on  $d, u, I, \Phi$ satisfying the following. Assume that
	\begin{itemize}
		\item $(X,B)$ is  a projective lc pair of dimension $d$ and $B\in I$,
		\item $f: X \to Z$ is a contraction with $K_X+B\sim_{\bR} 0/Z$,
		\item $(X,B)$ is klt over the generic point $\eta_Z$ of $Z$,
		\item $A\in \Phi$ is an effective $\bR$-divisor on $X$ such that  $A$ is relatively semi-ample over the generic point $\eta_Z$ of $Z$, and
		\item $0<\vol(A|_F)\leq u$ for the general fiber $F$ of $f$. 
	\end{itemize} 
	
	Then there exists an adjunction formula 
	\begin{equation*}
		K_X+B \sim_{\bR} f^*(K_Z+B_Z+M_Z)
	\end{equation*}
	such that $M_{\ZZ}=\Sigma_{i=1}^l r_iM_{i,\ZZ}$ on some high resolution $\ZZ\to Z$, where $r_i\in J$ and  $M_{i,\ZZ}$ is Cartier nef for any $i$. 
	
	Moreover, if in addition $(X,B)$ is $\epsilon$-lc for some $\epsilon\in \bR^{>0}$, then the coefficients of $B$ are in a fixed finite set $\Psi$ depending only on $d,u,\epsilon,I,\Phi$.
\end{thm}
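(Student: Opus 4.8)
The plan is to bootstrap from the rational‑coefficient results already at hand --- the Cartier‑index bound of Lemma~\ref{moduli part Cartier} and the nefness of the moduli b‑divisor from Lemma~\ref{adjunction formula for R coefficients} --- by decomposing $K_X+B$ into finitely many rational pieces on which the canonical bundle formula is affine‑linear, using the uniform rational polytope for adjunction of \cite[Theorem~3.3]{effectiveadjunctionformulawithrealcoefficients}, and then to read off finiteness of the discriminant coefficients from the $\epsilon$‑lc hypothesis via Lemma~\ref{singularities on base} and the ACC for lc thresholds.

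First, since $B\in I$ with $I$ finite, \cite[Theorem~3.3]{effectiveadjunctionformulawithrealcoefficients} furnishes a rational polytope, depending only on $d$ and $I$, in the space of coefficient vectors, containing that of $B$, on which the canonical bundle formula is affine‑linear in the coefficients and the relevant (sub‑)lc and klt conditions are constant. I would use it to write $B=\sum_{i=1}^{l} r_i B_i$ with $r_i>0$, $\sum r_i=1$, the $r_i$ in a finite set depending only on $d,I$, each $B_i$ a $\bQ$‑boundary with coefficients in a fixed finite set of rationals, each $(X,B_i)$ lc and klt over $\eta_Z$, and (taking the $r_i$ $\bQ$‑linearly independent, or using rationality of the relevant face of the polytope) $K_X+B_i\sim_\bQ 0/Z$ by \cite[Lemma~5.3]{HLS19}, just as in the proof of Lemma~\ref{singularities on base}. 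By linearity $\mathbf{B}_Z=\sum r_i\mathbf{B}_{i,Z}$ for the discriminant b‑divisors, and choosing $L_{i,Z}$ with $K_X+B_i\sim_\bQ f^*L_{i,Z}$ and $L_Z=\sum r_iL_{i,Z}$ we may take $M_Z=\sum r_iM_{i,Z}$, which gives the adjunction formula $K_X+B\sim_\bR f^*(K_Z+B_Z+M_Z)$.

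Next, for each $i$, Lemma~\ref{moduli part Cartier} applies to $(X,B_i)\to Z$ and $A$ --- with $q$ a common denominator of the coefficients of the $B_i$ (so $q$ depends only on $d,I$) and $U$ contained in the locus where $(X,B_i)$ is klt and $A$ is relatively semi‑ample --- and produces a fixed $p\in\bN$, depending only on $d,I,u,\Phi$, with $pM_{i,\ZZ}$ Cartier on a high model; since $(X,B_i)$ is lc over $\eta_Z$, Lemma~\ref{adjunction formula for R coefficients} gives that $M_{i,\ZZ}$ is nef on a high model. Passing to a common high resolution $\ZZ\to Z$ dominating all these models and pulling back, every $pM_{i,\ZZ}$ is Cartier and nef; replacing $M_{i,\ZZ}$ by $pM_{i,\ZZ}$ and $r_i$ by $r_i/p$, which lie in a finite set $J$ depending only on $d,I,u,\Phi$, yields $M_{\ZZ}=\sum_i r_iM_{i,\ZZ}$ with the $M_{i,\ZZ}$ Cartier nef, as required. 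Note that the discriminant coefficients $b_D=\sum r_ib_{i,D}$ automatically lie in a DCC set depending only on $d,I$: each $b_{i,D}=1-t_{i,D}$ with $t_{i,D}$ an lc threshold of a divisor with positive‑integer multiplicities (a DCC set) with respect to a pair with coefficients in a fixed finite set, so $\{t_{i,D}\}$ is ACC by \cite{ACCLCT}.

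For the final assertion, assume $(X,B)$ is $\epsilon$‑lc; then, as in the proof of Lemma~\ref{singularities on base}, the $B_i$ may be taken with $(X,B_i)$ further $\tfrac{\epsilon}{2}$‑lc, and Lemma~\ref{singularities on base} applied to $(X,B_i),A_{\red}$ gives a fixed $\delta>0$ with $t_{i,D}\ge\delta$ for all $i$ and all prime divisors $D$ over $Z$. Then for every prime divisor $W$ of $X$ dominating $D$ the inequality $\mult_W(B_i)+t_{i,D}\,\mult_W(f^*D)\le 1$ forces $\mult_W(f^*D)\le 1/\delta$, so all fibral multiplicities occurring lie in a fixed finite set; thus $t_{i,D}=\lct_{\eta_D}(X,B_i;f^*D)$ is an lc threshold, bounded below by $\delta$, of a $d$‑dimensional $\tfrac{\epsilon}{2}$‑lc pair whose boundary and test divisor have coefficients in fixed finite sets. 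It remains to show only finitely many such values occur: since $K_X+B_i\sim_\bQ 0/Z$ makes $X/Z$ relatively a minimal model, over the generic point of $D$ one obtains a crepant lc degeneration, over a DVR, of the fixed‑dimensional log Calabi--Yau fiber $(F,B_{i,F})$, which by \cite[Theorem~6.2]{GOPV} lies in a bounded family, and such degenerations contribute only finitely many discriminant coefficients. Hence $b_D$ lies in a finite set $\Psi$ depending only on $d,u,\epsilon,I,\Phi$. I expect this last step --- upgrading the DCC‑ness of the discriminant coefficients to genuine finiteness, i.e. a boundedness statement for crepant lc degenerations of a bounded family of log Calabi--Yau pairs over a curve --- to be the main obstacle; I would carry it out by combining \cite[Theorem~6.2]{GOPV}, Lemma~\ref{singularities on base}, and a relative minimal model program with the length of extremal rays, or by extracting it from the analogous finiteness in \cite{moduliofalgebraicvarieties}. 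Everything else is routine once \cite[Theorem~3.3]{effectiveadjunctionformulawithrealcoefficients} is available.
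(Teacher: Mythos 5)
Your skeleton is the same as the paper's: decompose $K_X+B=\sum r_i(K_X+B_i)$ via the uniform rational polytope of \cite[Theorem 3.3]{effectiveadjunctionformulawithrealcoefficients}, get nefness from Lemma \ref{adjunction formula for R coefficients} and a bounded Cartier index from Lemma \ref{moduli part Cartier} for each rational piece, and recombine with $J=\{r_i/p\}$. However, there are two genuine gaps. First, Lemma \ref{moduli part Cartier} takes as \emph{input} an adjunction formula $K_X+B_i\sim_q f^*(K_Z+B_{i,Z}+M_{i,Z})$, i.e.\ an actual linear equivalence after multiplication by a bounded $q$. Taking $q$ to be a common denominator of the coefficients of $B_i$ only makes $q(K_X+B_i)$ integral; it does not produce the $\sim_q$ relation, since a priori $K_X+B_i$ is merely $\bQ$-linearly trivial over $Z$ and the torsion order on the generic fiber is unbounded. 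The paper closes this by observing that $(F,B_{i,F}),A_F$ is a polarized klt Calabi--Yau pair with coefficients bounded away from zero, hence $\tau$-lc by \cite[Lemma 2.48]{BABI} and bounded by \cite[Theorem 6.2]{GOPV}, so by \cite[Lemma 7.2]{BVGP} one may enlarge $q$ boundedly so that $q(K_F+B_{i,F})\sim 0$; a rational function then makes $q(K_X+B_i)$ vertical over $Z$, and \cite[Lemma 2.5]{CHL23} yields the $\sim_q$ adjunction formula. Without this step the integer $p$ you extract from Lemma \ref{moduli part Cartier} is not bounded in terms of the fixed data.

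Second, the ``moreover'' part is not actually proved. Your inequality $\mult_W B_i+t_{i,D}\,\mult_W f^*D\le 1$ bounds multiplicities only for divisors $W$ lying on $X$ itself, whereas the threshold $t_{i,D}$ may be computed by a divisor exceptional over $X$; and the remaining claim --- that crepant lc degenerations over a DVR of a bounded family of log Calabi--Yau pairs contribute only finitely many discriminant coefficients --- is precisely the hard content, which you flag yourself as unresolved. The paper avoids this entirely: choosing the $B_i$ to be $\frac{\epsilon}{2}$-lc by \cite[Corollary 5.5]{HLS19}, it uses the argument of \cite[Lemma 6.7]{VGP} together with Lemma \ref{singularities on base} to show that $qB_{i,Z}$ is integral for a bounded $q$; since the coefficients of $B_{i,Z}$ lie in $[0,1]$ and $B_Z=\sum r_iB_{i,Z}$ with the $r_i$ in a finite set and boundedly many summands, the coefficients of $B_Z$ land in a finite set $\Psi$. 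In short, the needed finiteness comes from bounded denominators of the discriminant, not from a boundedness statement about degenerations, and your proposal as written does not supply either.
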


\begin{proof}
	\begin{enumerate} [label=\textsl{Step} \arabic{enumi}., wide=13pt, itemsep=13pt]
		\item By \cite[Theorem 3.3]{effectiveadjunctionformulawithrealcoefficients}, we can write $$K_X+B=\sum_{i=1}^l r_i(K_X+B_i)$$ such that 
		\begin{itemize}
			\item $r_1,\cdots,r_l$ belong to a finite set $J\subset \bR^{\geq0}$ depending only on $d,I$,
			\item $r_1,\cdots,r_l$ are $\bQ$-linear independent and $\sum_{i=1}^l r_i=1$,
			\item $(X,B_i)$ is lc and $\Nklt(X,B_i)=\Nklt(X,B)$ for any $i$ (in particular, $(X,B_i)$ is klt over the generic point $\eta_Z$ of $Z$ for any $i$), 
			\item $K_X+B_i\sim_{\bQ}0/Z$ for any $i$,
			\item there exists $q\in \bN$ depending only on $d,I$ such that $q(K_X+B_i)$ is integral for any $i$, and
			\item if $\textbf{M}$ and $\textbf{M}_i$ are the moduli part of the adjunction formulas with respect to $(X,B)$ and $(X,B_i)$, then $\textbf{M}=\sum_{i=1}^l r_i \textbf{M}_i$.
		\end{itemize}

		Take $F$ as a general fiber of $f$, then $K_F+B_{i,F}=(K_X+B_i)|_F\sim_{\bQ}0$. Hence $(F,B_{i,F}),A_F$ is a polarized klt Calabi-Yau pair. Since the coefficients of $B_{i,F}$ are in a finite set and the coefficients of $A_F$ are in a DCC set $\Phi$, they are bounded from below away from zero. By \cite[Lemma 2.48]{BABI}, $(F,B_{i,F})$ is $\delta$-lc for some positive real number $\delta$ depending only on $d,I$. Therefore, $(F, \Supp(B_{i,F}+A_F))$ belongs to a bounded family depending only on $\delta, \Phi, u$ by \cite[Theorem 6.2]{GOPV}. Hence by \cite[Lemma 7.2]{BVGP}, possibly replacing $q$ with a bounded multiple, we can assume that $q(K_F+B_{i,F})\sim 0$.  This implies that we can find a rational function $\alpha_i$ on $X$ such that $q(K_X+B_i)+\Div(\alpha_i)$ is vertical over $Z_i$. Since $$q(K_X+B_i)+\Div(\alpha_i)\sim_{\bQ}0/Z,$$ we see that $q(K_X+B_i)+\Div(\alpha_i)$ is the pullback of a $\bQ$-Cartier $\bQ$-divisor $qL_Z$ on $Z$ by \cite[Lemma 2.5]{CHL23}. Thus we have the following adjunction formula
		\begin{equation*}
			K_X+B_i\sim_{q}f^*(K_Z+B_{i,Z}+M_{i,Z})
		\end{equation*}
		where $B_{i,Z}$ is the discriminant divisor and $M_{i,Z}=L_Z-K_Z-B_{i,Z}$ is the moduli divisor.
		
		 By Lemma \ref{adjunction formula for R coefficients} and Lemma \ref{moduli part Cartier}, there exists a positive integer $p$ depending only on $d,u,I,\Phi$ such that $pM_{i,\ZZ}$ is Cartier nef on some high resolution $\ZZ\to Z$.
		
		\item Now we have an adjunction formula
		\begin{equation*}
			(\ast) \quad	K_X+B\sim_{\bR}f^*(K_Z+B_Z+M_Z),
		\end{equation*}
		where $B_Z=\sum_{i=1}^l r_iB_{i,Z}$ and $\textbf{M}=\sum_{i=1}^l r_i\textbf{M}_i$. 	 Let $J^{\prime}=\{\frac{r}{p}|r\in J\}$, then $$M_{\ZZ}=\sum_{i=1}^l \frac{r_i}{p}(pM_{i,\ZZ})$$ where $\frac{r_i}{p}\in J^{\prime}$ and $pM_{i,\ZZ}$ is Cartier nef for $1\leq i \leq l$.
		
		\item In this step we prove the moreover part. Now we assume that $(X,B)$ is $\epsilon$-lc for some fixed $\epsilon\in \bR^{>0}$, then in Step 1, we can choose $(X,B_i)$ to be $\frac{\epsilon}{2}$-lc by \cite[Corollary 5.5]{HLS19}. By the argument of the proof of \cite[Lemma 6.7]{VGP} and Lemma \ref{singularities on base}, replacing $q$ with a bounded multiple, we can assume that $qB_{i,Z}$ is integral. Since $B_Z=\sum_{i=1}^l r_iB_{i,Z}$ and $r_i$'s are in a finite set, we conclude that $B_Z\in \Psi$ where $\Psi \subset \bR^{\geq 0}$ is a finite set.
	\end{enumerate}
\end{proof}

\subsection{DCC coefficients case}

Next we show that there exists an effective adjunction formula in the case that the coefficients of $B$ are in a fixed DCC set of real numbers.

\begin{thm}\label{effective adjunction formula}
	Let $d\in \bN$, $u\in \bR^{>0}$ and $\Phi\subset \bR^{\geq0}$ be a DCC set. Then there exists a finite set $I \subset \bR^{\geq0}$ depending only on  $d,u, \Phi$ satisfying the following. Assume that
	\begin{itemize}
		\item $(X,B)$ is  a projective klt pair of dimension $d$ and $B\in \Phi$,
		\item $f: X \to Z$ is a contraction with $K_X+B\sim_{\bR} 0/Z$,
		\item $A\in \Phi$ is an effective $\bR$-divisor on $X$ such that $A$ is relatively semi-ample over the generic point $\eta_Z$ of $Z$, and
		\item $0<\vol(A|_F)\leq u$ for the general fiber $F$ of $f$. 
	\end{itemize} 
	
	Then there is an adjunction formula 
	\begin{equation*}
		K_X+B \sim_{\bR} f^*(K_Z+B_Z+M_Z)
	\end{equation*}
	such that $M_{\ZZ}=\sum_{i=1}^l r_iM_{i,\ZZ}$ on some high resolution $\ZZ\to Z$, where $r_i\in I$ and  $M_{i,\ZZ}$ is Cartier nef for any $i$. 
\end{thm}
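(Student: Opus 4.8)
The plan is to reduce to the finite coefficient case, Theorem~\ref{finite effective adjunction}. Since the statement only concerns the moduli part of the adjunction formula, and since the moduli b-divisor depends only on $(X,B)$ over the generic point $\eta_Z$ of $Z$, the vertical$/Z$ part of $B$---whose coefficients a priori range over the whole DCC set $\Phi$---plays no role in the conclusion. The idea is therefore to replace $(X,B)\to Z$ by a birational model whose boundary has coefficients in a fixed finite set, without altering the moduli data, and then to invoke Theorem~\ref{finite effective adjunction}.

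First I would pass to a general fibre $F$ of $f$ and set $K_F+B_F=(K_X+B)|_F$, so that $(F,B_F)$ is a klt log Calabi--Yau pair with $B_F\in\Phi$, $A_F:=A|_F\in\Phi$ and $0<\vol(A_F)\le u$. Because $K_F+B_F\sim_{\bR}0$, the global ACC \cite{ACCLCT} confines the coefficients of $B_F$ to a finite set $I_0\subset\bR^{\ge0}$ depending only on $d,\Phi$; as $F$ is general, the coefficients of the horizontal$/Z$ part $B^h$ of $B$ also lie in $I_0$. In addition $(F,B_F)$ is $\delta$-lc for some $\delta\in\bR^{>0}$ depending only on $d,\Phi$ by \cite[Lemma 2.48]{BABI}, the couple $(F,\Supp(B_F+A_F))$ lies in a bounded family depending only on $d,\Phi,u$ by \cite[Theorem 6.2]{GOPV}, and hence a bounded multiple of $K_F+B_F$ is linearly trivial by \cite[Lemma 7.2]{BVGP}.

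Next I would run the resolution-and-MMP construction of the proof of Lemma~\ref{moduli part Cartier}. Fix a resolution $\mu\colon\ZZ\to Z$ on which the moduli b-divisor descends and a log resolution $\pi\colon W\to X$ of $(X,B)$ with $W\dashrightarrow\ZZ$ a morphism, and let $\Delta_W$ be the birational transform of $B^h$ plus $\bigl(1-\tfrac{\delta}{2}\bigr)$ times the reduced sum of the $\pi$-exceptional prime divisors. Then $(W,\Delta_W)$ is klt with coefficients in the finite set $I_0\cup\bigl\{1-\tfrac{\delta}{2}\bigr\}$, and---crucially, this is where the $\delta$-lc bound on $F$ enters---over $\eta_Z$ the divisor $K_W+\Delta_W$ is $\bR$-linearly equivalent to the pullback of $K_{X_{\eta_Z}}+B_{\eta_Z}$ plus an effective exceptional divisor. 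Running an MMP on $K_W+\Delta_W$ first over $X$ (terminating over $\eta_Z$ by \cite[Theorem 1.8]{Bir12}) and then over $\ZZ$ (ending with a good minimal model by \cite[Theorem 1.2]{Hashizume19}), I reach a model $(V,\Delta_V)$ with $K_V+\Delta_V$ semi-ample over $\ZZ$ and defining a contraction $g\colon V\to\ZZZ$ over $\ZZ$ with $\ZZZ\to\ZZ$ birational. Here $(V,\Delta_V)$ is klt with coefficients in $I_0\cup\bigl\{1-\tfrac{\delta}{2}\bigr\}$, is klt over $\eta_{\ZZZ}$, a bounded multiple of $K_V+\Delta_V$ is linearly trivial over $\eta_{\ZZZ}$ (this torsion statement, being a crepant-birational invariant, follows from the second paragraph), and---exactly as in Lemma~\ref{moduli part Cartier}---its moduli divisor over $\ZZZ$ coincides with that of $(X,B)\to Z$.

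Finally I would apply Theorem~\ref{finite effective adjunction} to $(V,\Delta_V)\to\ZZZ$ with the finite coefficient set $I_0\cup\bigl\{1-\tfrac{\delta}{2}\bigr\}$, the hypotheses on the polarization being met by the birational transform of $A$ together with the boundedness established in the second paragraph (so that the general fibre of $g$, a crepant birational model of the bounded pair $(F,\Supp(B_F+A_F))$, is again bounded). This yields an adjunction formula $K_V+\Delta_V\sim_{\bR}g^*(K_{\ZZZ}+\Delta_{\ZZZ}+M_{\ZZZ})$ with $M_{\ZZZZ}=\sum_{i=1}^{l}r_iM_{i,\ZZZZ}$ on some high resolution $\ZZZZ\to\ZZZ$, where the $r_i$ lie in a finite set $I\subset\bR^{\ge0}$ depending only on $d,u,\Phi$ and each $M_{i,\ZZZZ}$ is Cartier nef; descending this decomposition along $\ZZZ\dashrightarrow Z$ to a sufficiently high common resolution of $\ZZZZ$ and $Z$ gives the desired adjunction formula for $(X,B)\to Z$ with the finite set $I$. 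I expect the main obstacle to be the third step: arranging the relative MMP so that the boundary coefficients become uniformly finite while the moduli data, the generic fibre pair, the boundedness of the fibre and the torsion index of $K_F+B_F$ all stay controlled by $d,u,\Phi$ alone. The conceptual crux behind this is the insensitivity of the moduli part of the adjunction formula both to the vertical part of the boundary and to crepant birational modifications over $\eta_Z$.
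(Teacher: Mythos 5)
Your proposal is correct and follows essentially the same route as the paper: reduce to Theorem~\ref{finite effective adjunction} by using global ACC to put the horizontal coefficients in a finite set, then pass via a relative MMP (as in Lemma~\ref{moduli part Cartier}) to a model over a birational modification of $Z$ that is crepant to $(X,B)$ over $\eta_Z$ but has finite-coefficient boundary, and conclude using the invariance of the moduli part under vertical changes of the boundary; your choice of a klt auxiliary boundary with coefficient $1-\tfrac{\delta}{2}$ on exceptional divisors is only a cosmetic variant of the paper's lc boundary $\tilde{B}^h+E^h+\ff^{-1}L$. The one place to tighten is the polarization hypothesis: the relative semi-ampleness of the transform of $A$ over the generic point of the new base follows from the model being small and crepant to $X$ over $\eta_Z$ (so $A$ pulls back there), not from boundedness of the fibre, which is how the paper justifies it as well.
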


\begin{proof}
	\begin{enumerate} [label=\textsl{Step} \arabic{enumi}., wide=13pt, itemsep=13pt]
		
		\item Let $F$ be a general fiber of $f:X\to Z$. Then $K_F+B_F:=(K_X+B)|_F\sim_{\bR}0$ and $(F,B_F)$ is a klt log Calabi-Yau pair. By global ACC for lc thresholds \cite[Theorem 1.5]{ACCLCT}, since the coefficients of $B_F$ are in a DCC set $\Phi$, they are in a finite set $J\subset \bR^{\geq0}$ depending only on $d,\Phi$. Hence if we denote $B^h$ to be the horizontal/$Z$ part of $B$, then $B^h\in J$.		
		
		\item In this step and the next step, we obtain a new pair from which preserves the horizontal part of $B$ over the generic point of $Z$ but changes the vertical part of $B$ to be reduced.
		  
		  Take high log resolutions of $(X,B)$ and $Z$ as follows:
		\begin{displaymath}
			\xymatrix{
				\XX \ar[d]_{\ff} \ar[r]^\pi  & X \ar[d]^f         \\
				\ZZ \ar[r]^\mu  & Z   }
		\end{displaymath}
		such that $(\XX,\Sigma)$ is log smooth, where $\Sigma$ is the sum of reduced $\pi$-exceptional divisors and the birational transform of $\Supp B$. Write $K_{\XX}+\BB=\pi^*(K_X+B)$. Let $\tilde{B}^v,\tilde{B}^h$ be the vertical/$\ZZ$ part and horizontal/$\ZZ$ part of the birational transform of $B$. Let $E^v,E^h$ be the vertical/$\ZZ$ part and horizontal/$\ZZ$ part of the reduced $\pi$-exceptional divisors. Then  we take an open subset $\UU$ in $\ZZ$ such that 
		\begin{itemize}
			\item $\mu:\ZZ\to Z$ is an isomorphism on $\UU$,
			\item $L:=\ZZ\backslash \UU$ is a reduced divisor on $\ZZ$, and
			\item  $\ff(\Supp(\tilde{B^v}+E^v))\subseteq L$.
		\end{itemize}
		
		Let $\ff^{-1}L$ be the reduction of the inverse image of $L$ with respect to $\ff:\XX\to \ZZ$ and  add $\ff^{-1}L$ to $\Sigma$. Possibly replacing $(\XX,\Sigma)$ with a higher birational model, we can assume that the condition $(\XX,\Sigma)$ being log smooth is preserved.
		\item Let $\Gamma^{\prime}=\tilde{B^h}+E^h+\ff^{-1}L$. Replacing $J$ with $J\cup \{1\}$, we have $\Gamma^{\prime}\in J$. Run an MMP on $K_{\XX}+\Gamma^{\prime}$ over $\ZZ$ with scaling of some ample divisor. Since over $\ff^{-1}\UU$, $(X,B)$ is a weak lc model of $(\XX,\Gamma^{\prime})$, hence by \cite[Corollary 3.7]{Bir12}, $(\XX,\Gamma^{\prime})$ has a minimal model over $\UU$. Therefore, by \cite[Theorem 1.9]{Bir12}, the MMP terminates over $\ff^{-1}\UU$ and we reach a model $(W,\Gamma_W)$ such that  $K_W+\Gamma_W\sim_{\bR}0/\UU$.
		
		Now we continue to run the MMP on $K_W+\Gamma_W$ over $\ZZ$. The MMP does not modify $W$ over $\UU$. Moreover, the MMP is also an MMP on $K_W+\Gamma_W-aF_W$ where $F_W$ is the pullback of $L$ with respect to $W\to \ZZ$ and $a>0$ is a small number. Since $K_W+\Gamma_W-aF_W$ is semi-ample over $\UU$ and $(W,\Gamma_W-aF_W)$ is klt, the MMP terminates with a good minimal model $V$ by \cite[Theorem 1.2]{Hashizume19}. Let $g: V\to \ZZZ$ be the contraction induced by the semi-ample/$\ZZ$ $\bR$-divisor $K_V+\Gamma_V$ and denote by $\mu^{\prime}$ the morphism $\ZZZ\to \ZZ$. If we denote $K_V+B_V$ as the pushdown of $K_{\XX}+\BB$, then $\Supp(\Gamma_V-B_V)$ maps into $L\subseteq \ZZ$. Since $K_{\XX}+\BB\sim_{\bR}0/\ZZ$, by the cone theorem, the pullbacks of $K_V+B_V$ and $K_X+B$ to a common resolution are the same. Therefore, we conclude that $(V,B_V)$ is a sub-klt pair and $K_V+B_V\sim_{\bR}0/Z$. Let $A_V$ be the birational transform of the horizontal/$Z$ part of $A$. Let $G$ be the general fiber of $g:V\to \ZZZ$. Since over $\UU$, $(V,\Gamma_V)$ is a small $\bQ$-factorialization of $(X,B)$, $A_V$ is the pullback of $A$. Therefore, $A_V$ is relatively semi-ample over the generic point of $\ZZZ$ and $0<\vol(A_V|_G)\leq u$.  
		
		\item Applying Theorem \ref{finite effective adjunction} to $(V,\Gamma_V)$ over $\ZZZ$, there exists a finite set $I\subset \bR^{\geq0}$ depending only on $d,u,J, \Phi$ such that we can write an adjunction formula
		\begin{equation*}
			(\ast) \quad K_V+\Gamma_V\sim_{\bR} g^*(K_{\ZZZ}+\Gamma_{\ZZZ}+M_{\ZZZ})
		\end{equation*}
		such that $M_{\ZZZZ}=\sum_{i=1}^l r_iM_{i,\ZZZZ}$ on some high resolution $\ZZZZ\to \ZZZ$, where $r_i\in I$ and  $M_{i,\ZZZZ}$ is Cartier nef for any $i$.
		
		Since $K_V+\Gamma_V$ is the pullback of  $K_X+B$ over $\UU\subseteq \ZZ$, $\Gamma_V-B_V$ is vertical over $\ZZZ$. Since $$K_V+\Gamma_V\sim_{\bR}K_V+B_V\sim_{\bR}0/\ZZZ,$$ we conclude that $\Gamma_V-B_V$ is the pullback of an effective $\bR$-Cartier $\bR$-divisor $P_{\ZZZ}$ on $\ZZZ$ by \cite[Lemma 2.11]{effectiveadjunctionconjecture}. The adjunction formula $(\ast)$ induces an adjunction formula
		\begin{equation*}
			(\ast\ast) \quad K_V+B_V\sim_{\bR} g^*(K_{\ZZZ}+B_{\ZZZ}+M_{\ZZZ})
		\end{equation*}
		 where $B_{\ZZZ}:=\Gamma_{\ZZZ}-P_{\ZZZ}$. 
		
		Since the pullbacks of $K_V+B_V$ and $K_X+B$ to a common resolution are the same, the adjunction formula $(\ast\ast)$ induces the following adjunction formula 
		\begin{equation*}
			(\ast\ast\ast) \quad K_X+B\sim_{\bR} f^*(K_Z+B_Z+M_Z)
		\end{equation*}
		where $K_Z+B_Z+M_Z$ is the pushdown of $K_{\ZZZ}+B_{\ZZZ}+M_{\ZZZ}$. 
	\end{enumerate}
\end{proof}

\section{Singularities of klt stable minimal models}
In this section we apply the effective adjunction formula to prove Theorem \ref{singularities of klt stable minimal models}. We follow the proof of \cite[Lemma 8.2]{BVGP}. 

\begin{proof}[Proof of Theorem \ref{singularities of klt stable minimal models}]
	\begin{enumerate} [label=\textsl{Step} \arabic{enumi}., wide=13pt, itemsep=13pt]
		\item Let $(X,B),A\in \cS_{klt}(d,\Phi,\leq u,v)$ and $f:  X\to Z$ be the contraction defined by the semi-ample $\bR$-divisor $K_X+B$.  By Theorem \ref{effective adjunction formula}, there exists a finite set $I \subset \bR^{\geq0}$ depending only on  $d, u,\Phi$ such that there is an adjunction formula 
		\begin{equation*}
			(\ast) \quad K_X+B \sim_{\bR} f^*(K_Z+B_Z+M_Z)
		\end{equation*}
		satisfying that $M_{\ZZ}=\sum_{i=1}^l r_iM_{i,\ZZ}$ on some high resolution $\ZZ \to Z$, where $r_i\in I$ and  $M_{i,\ZZ}$ is Cartier nef for any $i$. By the ACC for lc thresholds \cite[Theorem 1.1]{ACCLCT}, the coefficients of $B_Z$ are in a DCC set $\Psi\subset \bR^{\geq0}$ depending only on $d, \Phi$.
		Since $$\vol(K_Z+B_Z+M_Z)=\Ivol(K_X+B)=v,$$
		we conclude that $(Z,B_Z+M_Z)\in \cF_{gklt}(\dim Z, \Psi\cup I, v)$. By \cite[Theorem 1.5]{BVGP}, $(Z,B_Z+M_Z)$ is generalized $\delta$-lc for some $\delta\in \bR^{>0}$ depending only on $\dim Z, \Psi,I, v$, hence depending only on $d, \Phi, u, v$.
		
		\item If $D$ is a prime divisor over $X$ which is horizontal over $Z$, then $D$ determines a prime divisor $S$ over the general fiber $F$ of $f$. Since $(F,B_F)$ is a klt log Calabi-Yau pair and $B_F\in \Phi$, by \cite[Lemma 2.48]{BABI}, $(F, B_F)$ is $\tau$-lc for some $\tau\in \bR^{>0}$ depending only on $\dim F$ and $\Phi$, hence depending only on $d, \Phi$.  Then 
		\begin{equation*}
			a(D, X, B)=a(S, F, B_F)\geq \tau.
		\end{equation*}
		
		\item If $D$ is a prime divisor over $X$ which is vertical over $Z$, then we take high resolutions as follows:
		
		\begin{displaymath}
			\xymatrix{
				\XX \ar[d]_-{f^\prime} \ar[r]^\pi  & X \ar[d]^f            \\
				\ZZ \ar[r] ^\mu & Z   }
		\end{displaymath}
		such that $D$ is a divisor on $\XX$ and its image on $\ZZ$ is a prime divisor $E$. Let 
		\begin{equation*}
			K_{\XX}+\BB=\pi^*(K_X+B) 
		\end{equation*}
		and
		\begin{equation*}
			K_{\ZZ}+B_{\ZZ}+M_{\ZZ}=\mu^*(K_Z+B_Z+M_Z).
		\end{equation*}
		Since $(Z,B_Z+M_Z)$ is generalized $\delta$-lc,
		\begin{equation*}
			a(E,\ZZ,B_{\ZZ}+M_{\ZZ})=a(E,Z,B_Z+M_Z)\geq \delta.
		\end{equation*}
		Therefore, $$\mult_E B_{\ZZ}\leq 1-\delta.$$ By the definition of discriminant divisors, $(\XX,\BB+\delta f^{\prime *}E)$ is sub-lc over the generic point of $E$. This implies that $$\mult_D \BB+\delta \mult_D f^{\prime *}E\leq 1$$ and hence $\mult_D \BB\leq 1-\delta$. Thus 
		\begin{equation*}
			a(D,X,B)=a(D,\XX,\BB)\geq \delta.
		\end{equation*}	
		\item  From the above arguments we see that $(X,B)$ is $\epsilon$-lc, where $\epsilon:=\min\{\tau, \delta\}$ depending only on $d, \Phi, u, v$.
	\end{enumerate}
	
\end{proof}
\section{Boundedness of klt stable minimal models}
\subsection{Lower bound on lc thresholds of strongly stable minimal models}

We prove there is  a uniform lower bound on lc thresholds of strongly stable minimal models with klt singularities, which is the $\bR$-coefficient version of \cite[Lemma 8.3]{BVGP} and the proof is similar to the one there. The new tools applied here are the effective adjunction formula with $\bR$-coefficients (Theorem \ref{effective adjunction formula}) and uniform decomposition of generalized $\bR$-pairs (Lemma \ref{decomposition of ample divisor}).

\begin{thm}\label{lct for strong minimal model}
	Let $d\in \bN$, $u,v,w\in \bR^{>0}$, and $\Phi\subset \bR^{\geq0}$ be a DCC set. Then there exists $\lambda\in \bR^{>0}$ depending only on $d,u,v,w,\Phi$ such that for any strongly stable minimal model $$(X,B),A\in \cSS_{klt}(d,\Phi,\leq u,v)$$ with $\vol(K_X+B+A)\leq w$, the pair $(X,B+\lambda A)$ is lc.
\end{thm}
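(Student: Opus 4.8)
The plan is to argue by induction on $d=\dim X$, following \cite[Lemma 8.3]{BVGP} but feeding in the real-coefficient tools proved above — the effective adjunction formula (Theorem \ref{effective adjunction formula}) and the uniform decomposition of generalized $\bR$-pairs (Theorem \ref{decomposition of ample divisor}) — in place of the $\bQ$-coefficient canonical bundle formula and decomposition of ample $\bQ$-divisors used there. As a preliminary reduction, by Theorem \ref{singularities of klt stable minimal models} there is a uniform $\epsilon>0$ with $(X,B)$ being $\epsilon$-lc. Let $f\colon X\to Z$ be the contraction defined by $K_X+B$. Theorem \ref{effective adjunction formula} gives an adjunction formula $K_X+B\sim_{\bR}f^*(K_Z+B_Z+M_Z)$ with $M_{\ZZ}=\sum_i r_iM_{i,\ZZ}$ on a high resolution, the $r_i$ in a fixed finite set and each $M_{i,\ZZ}$ Cartier nef; by the ACC for lc thresholds \cite[Theorem 1.1]{ACCLCT} the coefficients of $B_Z$ lie in a fixed DCC set $\Psi$, and since $\vol(K_Z+B_Z+M_Z)=\Ivol(K_X+B)=v$, the generalized pair $(Z,B_Z+M_Z)$ lies in $\cF_{gklt}(\dim Z,\Psi,v)$, hence is generalized $\delta$-lc by \cite[Theorem 1.5]{BVGP} with $Z$ bounded by \cite[Theorem 1.4]{BVGP}. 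Because $v$ is a \emph{fixed} value and $Z$ is bounded, the coefficients of $B_Z$ in fact lie in a fixed finite set — an accumulating sequence of them would, by strict monotonicity of the volume under adding a nonzero effective divisor to an ample one, contradict the constancy of $\vol(K_Z+B_Z+M_Z)$ — so Theorem \ref{decomposition of ample divisor} is applicable to $(Z,B_Z+M_Z)$.

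The base case is $\dim Z=0$: then $K_X+B\sim_{\bR}0$, $A$ is ample, $(X,B),A$ is a polarized klt log Calabi--Yau pair with $B$ in a finite set by global ACC \cite[Theorem 1.5]{ACCLCT} and $\vol(A)=\vol(K_X+B+A)\le w$, so $(X,\Supp(B+A))$ is log bounded by \cite[Theorem 6.2]{GOPV}; as $(X,B)$ is $\epsilon$-lc, \cite[Theorem 1.8]{BABII} yields a uniform $\lambda>0$ with $(X,B+\lambda A)$ lc.

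Now assume $\dim Z\ge1$. By Theorem \ref{decomposition of ample divisor} write $K_Z+B_Z+M_Z=\sum_{i=1}^{l}r_i(K_Z+B_{i,Z}+M_{i,Z})$ with the $r_i$ in a fixed finite set, each $K_Z+B_{i,Z}+M_{i,Z}$ ample, and $D:=p(K_Z+B_{1,Z}+M_{1,Z})$ very ample for a fixed $p$. Since $f_*\cO_X=\cO_Z$, a general member of $|f^*D|$ is $S:=f^*T$ for a general $T\in|D|$; putting $K_S+B_S:=(K_X+B+S)|_S$, $A_S:=A|_S$ and using $S|_S=g^*(D|_T)$ for the induced contraction $g\colon S\to T$, one checks that $K_S+B_S=g^*((K_Z+B_Z+M_Z+D)|_T)$ is semi-ample defining $g$ and $K_S+B_S+A_S=(K_X+B+A)|_S+g^*(D|_T)$ is ample, so $(S,B_S),A_S\in\cSS_{klt}(d-1,\Phi,\le u,v')$ is again a klt strongly stable minimal model, now with $v'=\Ivol(K_S+B_S)=(K_Z+B_Z+M_Z+D)^{\dim Z-1}\cdot D$ an intersection number of controlled divisors on the bounded $Z$, hence lying in a fixed finite set $V'$. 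Although the separate intersection numbers $(K_S+B_S)^{i}\cdot A_S^{d-1-i}$ are not controlled, $\vol(K_S+B_S+A_S)$ is bounded above by a fixed $w'$: expanding $\bigl((K_X+B+A)|_S+g^*(D|_T)\bigr)^{d-1}$ and using that the nef classes $f^*(r_i(K_Z+B_{i,Z}+M_{i,Z}))$ sum to $\sim_{\bR}K_X+B$, so that $\tfrac{p}{r_1}(K_X+B)-f^*D$ is nef, reduces each term to a bounded multiple of $(K_X+B)^{a}\cdot(K_X+B+A)^{d-a}\le(K_X+B+A)^{d}=\vol(K_X+B+A)\le w$. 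Applying the inductive hypothesis in dimension $d-1$ for each value of $V'$ and taking the minimum of the resulting thresholds gives a uniform $\lambda'>0$ with $(S,B_S+\lambda'A_S)$ lc. Since $K_S+B_S+\lambda'A_S=(K_X+S+B+\lambda'A)|_S$, inversion of adjunction \cite{inversionofadjunction,haconinversion} shows $(X,S+B+\lambda'A)$, hence $(X,B+\lambda'A)$, is lc in a neighbourhood of $S$; as $|D|$ is basepoint free, for all but finitely many $z\in Z$ some "general" member of $|D|$ passes through $z$, so $(X,B+\lambda'A)$ is lc outside finitely many fibers of $f$. Finally, $K_X+B+\lambda'A$ is ample because $K_X+B+A$ is, so a standard application of the length of extremal rays, decreasing $\lambda'$ to a uniform $\lambda$, makes $(X,B+\lambda A)$ globally lc.

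I expect the two genuinely new difficulties, both caused by the real coefficients, to be the following. First, Theorem \ref{effective adjunction formula} only produces downstairs a generalized pair with DCC discriminant coefficients, whereas the uniform decomposition Theorem \ref{decomposition of ample divisor} needs finite coefficients; bridging this gap is precisely where the hypothesis $\Ivol(K_X+B)=v$ being a \emph{fixed} value, not merely bounded, is used. Second — and this is the obstruction already flagged for Theorem \ref{mainthm1} — after cutting down to $S=f^*T$ the intersection numbers $(K_S+B_S)^{i}\cdot A_S^{d-1-i}$ are not controlled, so the induction cannot be run exactly as in \cite[Lemma 8.3]{BVGP}; the point that makes it go through is that in dimension $d-1$ only $\Ivol(K_S+B_S)$ (a member of a fixed finite set) and $\vol(K_S+B_S+A_S)$ (bounded above) are needed, and both follow from the nef decomposition of $K_X+B$ afforded by Theorem \ref{decomposition of ample divisor} together with the ampleness of $K_X+B+A$ special to strongly stable minimal models.
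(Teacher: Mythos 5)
Your preliminary reductions (uniform $\epsilon$-lc via Theorem \ref{singularities of klt stable minimal models}, the adjunction formula of Theorem \ref{effective adjunction formula}, finiteness of the coefficients of $B_Z$ --- which in the paper is simply quoted from \cite[Theorem 1.5]{BVGP} rather than argued ad hoc --- and the decomposition of Theorem \ref{decomposition of ample divisor}), as well as the cutting computation showing that $(S,B_S),A_S$ is again a strongly stable minimal model with $\Ivol(K_S+B_S)$ in a fixed finite set and $\vol(K_S+B_S+A_S)$ bounded, are all sound and in fact mirror what the paper does in the proof of Theorem \ref{mainthm1}. But the induction does not close. After applying the inductive hypothesis to $(S,B_S),A_S$ and using inversion of adjunction, all you obtain is that $(X,B+\lambda' A)$ is lc away from finitely many fibers of $f$, since a general complete intersection $S=f^{-1}T$ simply misses the bad fibers. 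Your final sentence, that ``a standard application of the length of extremal rays, decreasing $\lambda'$'' makes the pair globally lc, is not a valid step: the length-of-extremal-rays argument (as in Steps 4 and 7 of the proof of Theorem \ref{mainthm1}) only yields \emph{nefness} of $K_X+B+\mu A$ for a smaller $\mu$; it gives no control whatsoever on log discrepancies over the finitely many special fibers, which is precisely the content of the theorem. Indeed, in the paper's proof of Theorem \ref{mainthm1} this exact situation (lc outside finitely many fibers, $K_X+B+\lambda A$ ample, volume bounded) is resolved by invoking Theorem \ref{lct for strong minimal model} itself, so completing your argument in the natural way would be circular.

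The paper's actual proof of Theorem \ref{lct for strong minimal model} is not an induction on dimension at all. Setting $H=\sum_i p(K_Z+B_{i,Z}+M_{i,Z})$, it shows $|m(f^*H+A)|$ is birational for a bounded $m$ by \cite[Theorem 4.2]{GOPV}, picks $N\in |m(f^*H+A)|$ with $\vol(N)$ bounded, and applies \cite[Proposition 4.4]{BABI} to $(X,B+mA),N$ to get a log smooth couple $(\oX,\oSigma)$ in a bounded family on which the pullback $\oA$ of $A$ has coefficients at most a fixed $c$; since the pullback $\oB$ of $K_X+B$ has coefficients at most $1-\epsilon$, a fixed $\lambda$ makes $(\oX,\oB+\lambda\oA)$ sub-lc, and lc-ness of $(X,B+\lambda A)$ --- including over the special fibers --- then follows from the negativity lemma because $K_X+B+\lambda A$ is ample. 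Your proposal is missing an ingredient of this kind (some global, log-birational-boundedness input controlling $A$ over \emph{every} fiber), and without it the claimed proof has a genuine gap.
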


\begin{proof}
	\begin{enumerate} [label=\textsl{Step} \arabic{enumi}.,wide=13pt,itemsep=13pt]
		\item Let $(X,B),A\in \cSS_{klt}(d,\Phi,\leq u,v)$.	By Theorem \ref{singularities of klt stable minimal models}, there is $\epsilon\in \bR^{>0}$ depending only on $d,u,v,\Phi$ such that $(X,B)$ is $\epsilon$-lc. Since $K_X+B\sim_{\bR} 0/Z$, we have $K_F+B_F=(K_X+B)|_F \sim_{\bR} 0$, where $F$ is the general fiber of $f$.  Since $A$ is ample over $Z$, $(F,B_F),A_F$ is a polarized $\epsilon$-lc log Calabi-Yau pair. Since the coefficients of $B_F,A_F$ are in a DCC set $\Phi$, they are bounded from below away from zero. 
		
		\item Let $s=\dim Z$. If $s=0$, then $(X,B),A$ is bounded by \cite[Theorem 6.2]{GOPV}, in this case the theorem follows from \cite[Theorem 1.8]{BABII}. Now we assume that $s\geq 1$. By Theorem \ref{effective adjunction formula}, there exists a finite set $I\subset \bR^{\geq0}$ depending only on $d,u,\Phi$ such that there is an adjunction formula
		\begin{equation*}
			K_X+B \sim_{\bR} f^*(K_Z+B_Z+M_Z)
		\end{equation*}
		satisfying that $M_{\ZZ}=\sum_{i=1}^l \mu_iM_{i,\ZZ}$ on some high resolution $\ZZ \to Z$, where $\mu_i\in I$ and  $M_{i,\ZZ}$ is Cartier nef for any $i$. By the ACC for lc thresholds \cite[Theorem 1.1]{ACCLCT}, the coefficients of $B_Z$ are in a DCC set $\Psi\subset \bR^{\geq0}$ depending only on $d, \Phi$.
		Hence $(Z,B_Z+M_Z)\in \cF_{gklt}(s, \Psi\cup I,v)$. By \cite[Theorem 1.5]{BVGP}, there is a fixed finite set $I^\prime\subset \bR^{>0}$ such that $B_Z\in I^\prime$. Therefore, $(Z,B_Z+M_Z)\in  \cF_{gklt}(s, I\cup I^\prime,v)$.
		
		By Theorem \ref{decomposition of ample divisor}, there is a fixed finite set $J\subset \bR^{\geq0}$ and a fixed $p\in \bN$ such that we can decompose $K_Z+B_Z+M_Z$ as follows:
		\begin{equation*}
			K_Z+B_Z+M_Z=\sum_{i=1}^l r_i(K_Z+B_{i,Z}+M_{i,Z})
		\end{equation*} 
		satisfying that 
		\begin{itemize}
			\item $\sum_{i=1}^l r_i=1$ and $r_i\in J$ for any $i$ ,
			\item $(Z,B_{i,Z}+M_{i,Z})$ is generalized klt for any $i$, and
			\item $p(K_Z+B_{i,Z}+M_{i,Z})$ is very ample for any $i$.
		\end{itemize}
		
		\item Let $$H:=\sum_{i=1}^lp(K_Z+B_{i,Z}+M_{i,Z})$$ and let $n$ be a fixed positive integer such that $n>\max\{\frac{p}{r_1},\cdots,\frac{p}{r_l}\}$, then $f^*H-(K_X+B)$ and $n(K_X+B)-f^*H$ are nef. Take $\pi: Y\to X$ as a $\bQ$-factorialization of $X$ and write $K_Y+B_Y=\pi^*(K_X+B)$. Since $(X,B),A$ is a strongly stable minimal model, $K_X+B+A$ is ample, thus $f^*H+A$ is ample, which implies that $\pi^*f^*H+\pi^*A$ is nef and big. Since $Y$ is $\epsilon$-lc and since $$\pi^*f^*H+\pi^*A-K_Y=\pi^*(f^*H-K_X-B)+\pi^*A+B_Y$$ is pseudo-effective and the coefficients of $\pi^*A$ are in the DCC set $\Phi$, by \cite[Theorem 4.2]{GOPV}, there is a bounded positive integer $m$ such that $|m(\pi^*f^*H+\pi^*A)|$ is birational, hence $|m(f^*H+A)|$ is birational.
		
		\item Replacing $m$ with a bounded multiple we can assume that $mA\geq 1$. Pick a member $$N\in|m(f^*H+A)|$$ and then 
		\begin{align*}
			\vol(N)&=\vol(m(f^*H+A))\\
			&\leq \vol(m(n(K_X+B)+A))\\
			&\leq (mn)^dw.
		\end{align*}
		For any component $D$ of $N$, we have $\mult_D(N+B+mA)\geq 1$ because any component of $N$ whose coefficient is not an integer is a component of $A$. By construction, $$N-(K_X+B+mA)\sim (m-1)f^*H+(f^*H-K_X-B)$$ is pseudo-effective. Applying \cite[Proposition 4.4]{BABI} to $(X,B+mA),N$, we conclude that there is $c\in \bR^{>0}$ and a bounded set $\cP$ depending only on $d, m,n,w$ such that there is a log smooth couple $(\overline{X},\overline{\Sigma})\in \cP$ and a birational map $\overline{X}\dasharrow X$ satisfying that 
		\begin{itemize}
			\item $\overline{\Sigma}$ contains the reduced exceptional divisor of $\overline{X}\dasharrow X$ and the support of the birational transform of $B+A+N$, and
			\item if $\overline{N}$ is the pullback of $N$ to $\overline{X}$, then $\overline{N}\cdot\overline{H}^{d-1}\leq c$ for some very ample divisor $\overline{H}\leq \overline{\Sigma}$.
		\end{itemize}
		The meaning of the pullback of $N$ to $\overline{X}$(and similarly for other divisors) is pulling back $N$ to a common resolution of $X,\overline{X}$ and then pushing down to $\overline{X}$. Note that in \cite[Proposition 4.4]{BABI}, $N$ is required to be a $\bQ$-divisor, but the proof of \cite[Proposition 4.4]{BABI} goes through verbatim when $N$ is an $\bR$-divisor. Let $\overline{A}$ be the pullback of $A$ to $\overline{X}$, then $\overline{A}\cdot\overline{H}^{d-1}\leq c$ since $N-A$ is pseudo-effective, and this implies that $\overline{A}\leq c$.
		
		\item Let $K_{\overline{X}}+\overline{B}$ be the pullback of $K_X+B$ to $\overline{X}$, then $\overline{B}\leq 1-\epsilon$ because $(X,B)$ is $\epsilon$-lc. Therefore, there is a fixed number $\lambda\in (0,1)$ such that $(\overline{X},\overline{B}+\lambda\overline{A})$ is sub-lc as $(\overline{X},\overline{B}+\overline{A})$ is log smooth.
		
		Since $$K_X+B+\lambda A=\lambda (K_X+B+A)+(1-\lambda)(K_X+B)$$ is ample, by negativity lemma we conclude that $(X,B+\lambda A)$ is lc.
	\end{enumerate}
\end{proof}

\subsection{Proof of Theorem \ref{mainthm1}}
We are now ready to prove the boundedness of stable minimal models with klt singularities. As explained in the sketch of the proofs, given a stable minimal model $(X,B),A$ in Theorem \ref{mainthm1}, the key point is to shrink $\Nlc(X,B+\lambda A)$ by induction. 
\begin{proof}[ Proof of Theorem \ref{mainthm1}]
	\begin{enumerate} [label=\textsl{Step} \arabic{enumi}.,wide=13pt,itemsep=13pt]
		\item Let $(X,B),A\in \cS_{klt}(d,\Phi,\leq u,v)$ such that $(K_X+B)^i\cdot A^{d-i}\leq w$ for $0\leq i\leq d$, and $f: X\to Z$ be the contraction induced by the semi-ample $\bR$-divisor $K_X+B$.  By Theorem \ref{singularities of klt stable minimal models}, there exists $\epsilon\in \bR^{>0}$ depending only on $d,\Phi,u,v$ such that $(X,B)$ is $\epsilon$-lc.
		
		Since $K_X+B\sim_{\bR} 0/Z$, we have $K_F+B_F=(K_X+B)|_F \sim_{\bR} 0$, where $F$ is the general fiber of $f$.  Thus $(F,B_F),A_F$ is a polarized $\epsilon$-lc log Calabi-Yau pair. Since the coefficients of $A, B$ are in a DCC set $\Phi$, they are bounded from below away from zero. Therefore, by \cite[Theorem 6.2]{GOPV}, $(F, \Supp(B_F+A_F))$ belongs to a bounded family. 
		
		\item Let $s=\dim Z$. If $s=0$, then by Step 1, $(X,B),A$ is bounded, hence we can assume that $s\geq 1$. By Theorem \ref{effective adjunction formula}, there exists a finite set $I\subset \bR^{\geq0}$ depending only on $d,u,\Phi$ such that there is an adjunction formula
		\begin{equation*}
			K_X+B \sim_{\bR} f^*(K_Z+B_Z+M_Z)
		\end{equation*}
		satisfying that $M_{\ZZ}=\sum_{i=1}^l \mu_iM_{i,\ZZ}$ on some high resolution $\ZZ\to Z$, where $\mu_i\in I$ and  $M_{i,\ZZ}$ is Cartier nef for any $i$. By the ACC for lc thresholds \cite[Theorem 1.1]{ACCLCT}, the coefficients of $B_Z$ are in a DCC set $\Psi\subset \bR^{\geq0}$. Hence $(Z,B_Z+M_Z)\in \cF_{gklt}(s, \Psi\cup I,v)$. By \cite[Theorem 1.5]{BVGP}, there is a finite set $I^\prime\subset \bR^{\geq 0}$ depending only on $s,v,\Psi,I$, hence depending only on $d,u,v,\Phi$, such that $B_Z\in I^\prime$. Therefore, $(Z,B_Z+M_Z)\in  \cF_{gklt}(s, I\cup I^\prime,v)$.
		
		By Theorem \ref{decomposition of ample divisor}, there is a fixed finite set $J\subset \bR^{\geq0}$ and a fixed $p\in \bN$ such that we can decompose $K_Z+B_Z+M_Z$ as follows:
		\begin{equation*}
			K_Z+B_Z+M_Z=\sum_{i=1}^l r_i(K_X+B_{i,Z}+M_{i,Z})
		\end{equation*} 
		satisfying that 
		\begin{itemize}
			\item $\sum_{i=1}^l r_i=1$ and $r_i\in J$ for any $i$,
			\item $(Z,B_{i,Z}+M_{i,Z})$ is generalized klt for any $i$, and
			\item $L_i:=p(K_Z+B_{i,Z}+M_{i,Z})$ is a very ample for any $i$.
		\end{itemize}		 
		
		\item Define $P(t)$ as the image of non-lc locus of $(X,B+tA)$ on $Z$, i.e. $$P(t):=f(\Nlc(X,B+tA)).$$ We regard $P(t)$ as a closed subset of $Z$. Now we prove the following statement $\cC_k$ by induction on $k$: there exists $\lambda_k\in \bR^{>0}$ depending only on $d,\Phi,u,v,w$ such that 
		\begin{itemize}
			\item $\dim P(\lambda_k)\leq s-k$, and
			\item $K_X+B+\lambda_k A$ is nef in dimension $k-1$ over $Z$.
		\end{itemize}
		
		By \cite[Theorem 1.8]{BABII}, there exists $\lambda_1\in \bR^{>0}$ such that $(F,B_F+\lambda_1 A_F)$ is lc. Then $(X,B+\lambda_1 A)$ is lc over some open subset $U\subseteq Z$, hence $\dim P(\lambda_1)\leq s-1$.  Since $A$ is relatively nef over $Z$ and $K_X+B$ is semi-ample, we conclude that $K_X+B+\lambda_1 A$ is nef in dimension $0$ over $Z$.
		
		Assume that the statement $\cC_k$ holds and $\lambda_k$ is chosen for $\cC_k$. We prove the statement $\cC_{k+1}$ in the following steps.
		
		\item In this step we prove that there exists a fixed $\lambda_{k+1}\in \bR^{>0}$ such that $K_X+B+\lambda_{k+1}A$ is nef in dimension $k$ over $Z$.
		
		Let $H_1,\cdots,H_{s-k}$ be any very ample divisors in $Z$. Take  a general member of the linear system $|H_i|$ and in abuse of notation, we still denote by $H_i$ the general member. Let $T=\cap_{i=1}^{s-k} H_i$, $R_i=f^{-1}H_i$ and $S=\cap_{i=1}^{s-k}R_i=f^{-1}T$. We can write 
		$$K_S+B_S+\lambda_k A_S=(K_X+B+\lambda_k A+\sum_{i=1}^{s-k}R_i)|_S,$$ where $A_S=A|_S$.
		
		If $K_S+B_S+\lambda_k A_S$ is nef, then take $\lambda_{k+1}=\lambda_k$ and we are done. Now assume that $K_S+B_S+\lambda_k A_S$ is not nef. By \cite[Theorem 1.2, Corollary 1.4]{fujinoadjunction}, $$\Nlc(S,B_S+\lambda_k A_S)=S\cap \Nlc(X,B+\lambda_k A).$$ Since $\dim P(\lambda_k)\leq s-k$, $S=\cap_{i=1}^{s-k}f^{-1}H_i$, and $H_i$ are general hypersurfaces in $Z$, we deduce that $\Nlc(S,B_S+\lambda_k A_S)$ is contained in finitely many fibers of $S\to T$. Let $R$ be a $(K_S+B_S+\lambda_k A_S)$-negative extremal ray. Since $K_S+B_S+\lambda_kA_S$ is ample over $T$ and $K_S+B_S$ is the pullback of an ample divisor on $T$, $K_S+B_S$ is positive on $R\backslash\{0\}$, which implies that $R$ is not contained in the image of the map of the closed cone of curves $$\overline{NE}(\Nlc(S,B_S+\lambda_k A_S))\to\overline{NE}(S).$$ By the length of extremal ray \cite[Theorem 1.1(5)]{fundamentaltheoremforlmmp}, there is a curve $C$ on $S$ generating $R$ and satisfying that $$(K_S+B_S+\lambda_k A_S)\cdot C\geq -2(d-s+k).$$
		
		Take a fixed positive integer $$m>\max\{\frac{p}{r_1},\cdots,\frac{p}{r_l}\},$$ then $m(K_Z+B_Z+M_Z)-L_i$ is ample and thus $m(K_X+B)-f^*L_i$ is nef. Therefore, $$m(K_X+B)|_S\cdot C\geq f^*L_1|_S\cdot C=L_1\cdot f(C)\geq1$$ because $L_1$ is a very ample Cartier divisor on $Z$. Now we have
		\begin{align*}
			&\left((1+2(d-s+k)m)(K_X+B+\sum_{i=1}^{s-k}R_i)|_S+\lambda_k A_S\right)\cdot C\\
			\geq& (K_S+B_S+\lambda_k A_S)\cdot C+ 2(d-s+k)m(K_X+B)|_S \cdot C\geq0.
		\end{align*}
		
		Let $$\lambda_{k+1}=\frac{\lambda_k}{1+2(d-s+k)m}.$$ The above argument  implies that $K_S+B_S+\lambda_{k+1}A_S$ is nef. Therefore, $K_X+B+\lambda_{k+1}A$ is nef in dimension $k$ over $Z$.

		\item Let $H_1,\cdots,H_{s-k}$ be general members of one of the linear systems $|L_i|$ (each $H_j$ can be in different linear systems). Recall that $m(K_X+B)-f^*L_i$ is nef for $1\leq i\leq l$, thus $m(s-k)(K_X+B)-\sum_{i=1}^{s-k}f^*H_i$ is nef. By Lemma \ref{nef in dimension}, we have the following inequality:
		\begin{equation*}
			(\ast) \quad ((1+m(s-k))(K_X+B)+\lambda_{k+1}A)^{d-s+k}\cdot f^*H_1\cdot \ldots \cdot f^*H_{s-k}\geq0.
		\end{equation*}
		
		\item In this step we prove that after replacing $\lambda_{k+1}$ with a smaller number, $\dim P(\lambda_{k+1})\leq s-k-1$ and hence $\cC_{k+1}$ is satisfied.

		Let $Q_1,\cdots, Q_{s-k}$ be general members of the linear system $|L_1|$. Abusing notation denote $T=\cap_{i=1}^{s-k}Q_i$, $R_i=f^*Q_i$ and $S=\cap_{i=1}^{s-k}R_i=f^{-1}T$. We can write 
		$$K_S+B_S+\lambda_{k+1} A_S=(K_X+B+\lambda_{k+1} A+\sum_{i=1}^{s-k}R_i)|_S,$$ where $A_S=A|_S$. 
		\begin{itemize}[itemsep=13pt]
			
			\item Since $K_X+B+\lambda_{k+1}A$ is nef in dimension $k$ over $Z$, $K_S+B_S+\lambda_{k+1}A_S$ is nef. We claim that $K_S+B_S+tA_S$ is ample for any $t\in(0,\lambda_{k+1})$: Fix $t\in(0,\lambda_{k+1})$. Since $K_S+B_S$ is the pullback of an ample divisor on $T$ and $A_S$ is ample over $T$, there is a sufficiently small $0<t^\prime<t$ such that $K_S+B_S+t^\prime A_S$ is ample. Then $K_S+B_S+tA_S$ is a positive linear combination of $K_S+B_S+t^\prime A_S$ and $K_S+B_S+\lambda_{k+1}A_S$, which implies that $K_S+B_S+tA_S$ is ample.
			
			Therefore, replacing $\lambda_{k+1}$ with $\frac{\lambda_{k+1}}{2}$ we can assume that $K_S+B_S+\lambda_{k+1}A_S$ is ample. Note that the condition $K_X+B+\lambda_{k+1}A$ being nef in dimension $k$ over $Z$ is preserved.
			
			\item Let $\overline{F}$ be the general fiber of $S\to T$ and $F$ be the general fiber of $X\to Z$. Since $T$ is the complete intersection of general hypersurfaces, $$\vol(A_S|_{\overline{F}})=\vol(A|_F)\leq u.$$
			\item Note that $$(\sum_{i=1}^{l}r_iL_i)^s=\vol(p(K_Z+B_Z+M_Z))=p^sv.$$ If $(\alpha_1,\cdots,\alpha_l)$ is an element of the set $$\{(n_1,\cdots,n_l)|n_i\in \bN,\sum_{i=1}^ln_i=s\},$$ then the intersection number $$L_1^{\alpha_1}\cdot \ldots \cdot L_l^{\alpha_l}$$ is a positive integer and bounded from above. Without loss of generality, we can assume that $$L_1^{\alpha_1}\cdot \ldots \cdot L_l^{\alpha_l}$$ are fixed numbers for all $$(\alpha_1,\cdots,\alpha_l)\in\{(n_1,\cdots,n_l)|n_i\in\bN,\sum_{i=1}^ln_i=s\}.$$
			Therefore, 
			\begin{align*}
				&\Ivol(K_S+B_S)\\
				=&\Ivol((K_X+B+\sum_{i=1}^{s-k}R_i)|_S)\\
				=&\vol((K_Z+B_Z+M_Z+\sum_{i=1}^{s-k}Q_i)|_T)\\
				=&(\frac{1}{p}\sum_{i=1}^l r_iL_i+(s-k)L_1)^k\cdot L_1^{s-k}
			\end{align*}
			is a fixed positive real number, say $v_k$.
			
			\item Now we show that $\vol(K_S+B_S+\lambda_{k+1}A_S)$ is bounded from above.
			\begin{align*}
				&\vol(K_S+B_S+\lambda_{k+1}A_S)\\
				=&\vol((K_X+B+\lambda_{k+1}A+\sum_{i=1}^{s-k}R_i)|_S)\\
				\leq&\vol(((1+m(s-k))(K_X+B)+\lambda_{k+1}A)|_S)\\
				=&((1+m(s-k))(K_X+B)+\lambda_{k+1}A)^{d-s+k}\cdot (f^*L_1)^{s-k}\\
				\leq & ((1+m(s-k))(K_X+B)+\lambda_{k+1}A)^{d-s+k}\cdot (m(K_X+B))^{s-k}
			\end{align*}
			is bounded from above because $(K_X+B)^i\cdot A^{d-i}\leq w$ for $0\leq i\leq d$. The first inequality follows from the nefness of  $m(s-k)(K_X+B)-\sum_{i=1}^{s-k}R_i$. The second inequality follows from the inequality $(\ast)$ in Step 5. 
		\end{itemize}
		~\\
		The above argument implies that $$(S,B_S),\lambda_{k+1}A_S\in \cSS_{klt}(d-s+k,\Phi\cup \lambda_{k+1}\Phi,\leq \lambda_{k+1}^{d-s}u, v_k).$$ Hence we can apply Theorem \ref{lct for strong minimal model} to $(S,B_S),\lambda_{k+1}A_S$. Replacing $\lambda_{k+1}$ with a smaller fixed number, we can assume that $(S,B_S+\lambda_{k+1}A_S)$ is lc. By inversion of adjunction \cite[Theorem 0.1]{haconinversion} or \cite{inversionofadjunction}, we conclude that $(X,B+\lambda_{k+1}A)$ is lc near $S$. Since $S=f^{-1}T$ and $T$ is the complete intersection of $(s-k)$ general hypersurfaces, we deduce that $\dim P(\lambda_{k+1})\leq s-k-1$, hence $\cC_{k+1}$ is satisfied.
		
		\item In this step we prove that there exists a fixed $\lambda\in \bR^{>0}$ such that $(X,B+\lambda A)$ is lc and $K_X+B+\lambda A$ is globally nef.
		
		By induction on $k$, $\cC_{s}$ is satisfied for some fixed $\lambda_{s}\in \bR^{>0}$.  Then $(X,B+\lambda_{s}A)$ is lc over the complement of  a finite set of closed points of $Z$ because $\dim P(\lambda_{s})\leq 0$.
		
		If $K_X+B+\lambda_{s}A$ is nef, then let $\lambda=\lambda_{s}$. If $K_X+B+\lambda_{s} A$ is not nef, let $R$ be a $(K_X+B+\lambda_{s} A)$-negative extremal ray. Repeating the process in Step 4, there is a curve $C$ generating $R$ and satisfying that $$(K_X+B+\lambda_{s} A)\cdot C\geq -2d$$ and $$m(K_X+B)\cdot C\geq 1.$$ Hence $$((1+2md)(K_X+B)+\lambda_{s} A)\cdot R\geq 0.$$ This argument implies that $$(1+2md)(K_X+B)+\lambda_{s} A$$ is nef. Let $\lambda=\frac{\lambda_{s}}{1+2md}$, then $K_X+B+\lambda A$ is nef. 
		
		The same argument as in Step 6 implies that after replacing $\lambda$ with $\frac{\lambda}{2}$, $K_X+B+\lambda A$ is ample. Therefore,
		\begin{equation*}
			(X,B), \lambda A\in \cSS_{klt}(d,\Phi\cup \lambda\Phi,\leq \lambda^{d-s}u,v).
		\end{equation*} 
		Moreover,
		\begin{equation*}
			\vol(K_X+B+\lambda A)=(K_X+B+\lambda A)^d
		\end{equation*}
		is bounded from above because $(K_X+B)^i\cdot A^{d-i}\leq w$ for $0\leq i\leq d$.
		By Theorem \ref{lct for strong minimal model}, after replacing $\lambda$ with a smaller fixed number, we can assume that $(X,B+\lambda A)$ is lc. 
		
		\item In this step we finish the proof. Since $(X,B)$ is $\epsilon$-lc, $(X,B+\frac{\lambda}{2}A)$ is $\frac{\epsilon}{2}$-lc. Since the coefficients of $B+\frac{\lambda}{2}A$ is in the DCC set $\Phi\cup \frac{\lambda}{2}\Phi$, there is a fixed $N\in \bN$ such that $|N(K_X+B+\frac{\lambda}{2}A)|$ is birational by \cite[Theorem 1.3]{ACCLCT}. Thus $|K_X+(2d+1)N(K_X+B+\frac{\lambda}{2}A)|$ is birational by \cite[Lemma 2.3.4]{birationalatuomorphism}. Since $\vol(K_X+B+\frac{\lambda}{2}A)$ is bounded from above, we conclude that $(X,B+\frac{\lambda}{2}A)$ is log birationally bounded by \cite[Theorem 3.1]{birationalatuomorphism}. Note that $K_X+B+\frac{\lambda}{2}A$ is ample, thus $(X,B+\frac{\lambda}{2}A)$ and hence $(X,B),A$ belongs to a bounded family by \cite[Theorem 1.6]{ACCLCT}.
	\end{enumerate}
\end{proof}

\begin{remark}\label{remove condition}
	The condition ``$\vol(A|_F)\leq u$'' in Theorem \ref{mainthm1} can be removed: applying \cite[Lemma 4.12]{moduliofalgebraicvarieties}, we have 
	\begin{equation*}
		(K_X+B)^{\dim Z}\cdot A^{d-\dim Z}=\Ivol(K_X+B)\vol(A|_F),
	\end{equation*}
	thus $\vol(A|_F)\leq \frac{w}{v}$.
\end{remark}

\subsection{Proof of Theorem \ref{mainthm2}} In this subsection we first generalize \cite[Theorem 1.3]{filipazzi2020some} to the generalized pair case.  Then we apply Theorem \ref{mainthm1} to prove Theorem \ref{mainthm2}.

\begin{lem}\label{volume fixed}
	Let $d\in \bN$, $\epsilon,v\in \bR^{>0}$, and $I\subset \bR^{\geq0}$ be a finite set. Then there exists a finite set $J\subset \bR^{\geq0}$ depending only on $d,\epsilon,v,I$ satisfying the following. 
	
	If $(X,B+M)\in \cF_{gklt}(d,I,\leq v)$ is a generalized $\epsilon$-lc pair, then $\vol(K_X+B+M)\in J$.
\end{lem}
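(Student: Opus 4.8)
The plan is to reduce the statement to a boundedness fact and then extract finiteness of the volume from the uniform decomposition of Theorem \ref{decomposition of ample divisor}. Concretely, I would first show that the generalized $\epsilon$-lc members of $\cF_{gklt}(d,I,\leq v)$ lie in a bounded family with uniformly bounded Cartier indices, and then expand $\vol(K_X+B+M)$ as an explicit combination of bounded intersection numbers. This follows the strategy of \cite[Theorem 1.3]{filipazzi2020some}; the new point compared with that reference is the presence of the nef part $M$.

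For the boundedness step, fix such a pair $(X,B+M)$ with data $\XX\to X$ and $\MM=\sum\mu_i\MMi$. Since $I$ is finite, the nonzero coefficients of $B$ and all the $\mu_i$ are bounded below by a fixed $\delta_0\in\bR^{>0}$. I would then run the three standard moves, now in the generalized setting: first invoke effective birationality to get a fixed $N\in\bN$, depending only on $d,\epsilon,I$, with $|N(K_X+B+M)|$ birational; next use $\vol(K_X+B+M)\leq v$ to make $|K_X+(2d+1)N(K_X+B+M)|$ birational by \cite[Lemma 2.3.4]{birationalatuomorphism} and hence $(X,B+M)$ log birationally bounded by \cite[Theorem 3.1]{birationalatuomorphism}; finally apply the generalized-pair analogue of the $\epsilon$-lc boundedness criterion \cite[Theorem 1.6]{ACCLCT}, which is available because $(X,B+M)$ is generalized $\epsilon$-lc with $B\geq\delta_0$ on its support and $K_X+B+M$ ample, to conclude that $(X,B+M)$ lies in a bounded family $\cP$ depending only on $d,\epsilon,v,I$. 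As in the proof of Lemma \ref{bound Cartier index}, this produces a boundary $\Theta$ on $X$ making $(X,\Theta)$ an $\epsilon'$-lc pair in a log bounded family, so by \cite[Theorem 1.10]{HLQ23} there is a fixed $N'\in\bN$ such that the Cartier index of every $\bQ$-Cartier Weil divisor on $X$ divides $N'$.

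Granting the bounded Cartier index, the proof of Theorem \ref{decomposition of ample divisor} applies to our family essentially unchanged — the only appeal to a fixed volume there is through Lemma \ref{bound Cartier index}, which I replace by the bound $N'$. This yields a fixed finite set $J_0\subset\bR^{>0}$ and a fixed $p\in\bN$, depending only on $d,\epsilon,v,I$, together with a decomposition $K_X+B+M=\sum_{i=1}^l r_i(K_X+B_i+M_i)$ with $\sum r_i=1$, $r_i\in J_0$, $l\leq 1/\min J_0$, each $(X,B_i+M_i)$ generalized klt and $K_X+B_i+M_i$ ample, and $L_i:=p(K_X+B_i+M_i)$ very ample. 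I would then simply compute
\begin{equation*}
	\vol(K_X+B+M)=\frac{1}{p^d}\Big(\sum_{i=1}^l r_iL_i\Big)^{\!d}=\frac{1}{p^d}\sum_{|\alpha|=d}\binom{d}{\alpha}\Big(\prod_{i=1}^l r_i^{\alpha_i}\Big)\Big(\prod_{i=1}^l L_i^{\alpha_i}\Big),
\end{equation*}
the sum running over tuples $\alpha=(\alpha_1,\dots,\alpha_l)$ of nonnegative integers with $\sum_i\alpha_i=d$. Each $\prod_i L_i^{\alpha_i}$ is a nonnegative integer; moreover $\frac{p}{r_i}(K_X+B+M)-L_i=\frac{p}{r_i}\sum_{j\neq i}r_j(K_X+B_j+M_j)$ is nef, so replacing the $L_i$ one at a time by $\frac{p}{r_i}(K_X+B+M)$ in this intersection of nef divisors gives $\prod_i L_i^{\alpha_i}\leq\big(\prod_i (p/r_i)^{\alpha_i}\big)\vol(K_X+B+M)\leq p^dv/(\min J_0)^d$. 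Hence each $\prod_i L_i^{\alpha_i}$ lies in a fixed finite set of integers, and since $\binom{d}{\alpha}$, the products $\prod_i r_i^{\alpha_i}$ with $r_i\in J_0$, and the number of admissible $\alpha$ are all bounded in terms of $d$ and $I$, the displayed expression takes only finitely many values; $J$ is the resulting finite set.

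The hard part will be the boundedness step, i.e. checking that effective birationality and the $\epsilon$-lc boundedness criterion genuinely carry over to generalized $\epsilon$-lc pairs whose nef part is built from Cartier nef divisors with coefficients in the finite set $I$, so that $\cP$ and $N'$ exist. Once that is in place, the remainder is the finite computation above, and in particular $\vol(K_X+B+M)\in J\subset\bR^{\geq0}$ as required.
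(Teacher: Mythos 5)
Your closing computation is sound: granted the decomposition $K_X+B+M=\sum_{i=1}^l r_i(K_X+B_i+M_i)$ with $L_i=p(K_X+B_i+M_i)$ very ample and $r_i$ in a fixed finite set, expanding $\vol(K_X+B+M)=p^{-d}\big(\sum_i r_iL_i\big)^d$ multinomially and bounding each nonnegative integer $L_1^{\alpha_1}\cdots L_l^{\alpha_l}$ by nef replacement does force the volume into a fixed finite set, and this is a neat alternative ending. The genuine gap is exactly the step you flag as ``the hard part'': the whole argument rests on the claim that generalized $\epsilon$-lc members of $\cF_{gklt}(d,I,\leq v)$ form a bounded family with a uniform bound on Cartier indices, so that the proof of Theorem \ref{decomposition of ample divisor} can be rerun with ``$\leq v$'' in place of ``$=v$''. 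No such statement is available from the references you invoke: \cite[Theorem 1.6]{ACCLCT} is a criterion for pairs, not generalized pairs, and the only boundedness result for generalized pairs in this paper's toolkit, \cite[Theorem 1.4]{BVGP} (which is what feeds Lemma \ref{bound Cartier index} and hence Theorem \ref{decomposition of ample divisor}), requires the volume to be fixed equal to $v$ --- which is precisely the finiteness being proved, so appealing to it or to its consequences here is circular. Your sketch for filling the gap (effective birationality for $\bR$-coefficient generalized pairs, then \cite[Lemma 2.3.4, Theorem 3.1]{birationalatuomorphism}, then an unproved ``generalized-pair analogue'' of \cite[Theorem 1.6]{ACCLCT}) is a plausible program, but each step needs a real argument handling the nef part and the real coefficients; and even granted boundedness of $(X,\Supp B)$, the appeal to \cite[Theorem 1.10]{HLQ23} requires an $\epsilon'$-lc klt structure $(X,\Theta)$ in a log bounded family, which in Lemma \ref{bound Cartier index} again comes from the proof of \cite[Theorem 1.4]{BVGP} with fixed volume --- note that $K_X+B$ itself need not be $\bR$-Cartier, so such a structure is not automatic.

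The paper's proof is built precisely to avoid proving boundedness of $X$: it uses only log birational boundedness, \cite[Theorem 1.2]{BVGP}, which does apply with $\vol\leq v$, descends the nef parts to a bounded log smooth model $(\oX,\oSigma)$, modifies that model via \cite[Theorem 1.2]{FTF} so that $X\dasharrow \oX$ contracts no divisors, and then obtains finiteness of $\vol(K_{\oX}+\oB+\oM)=\vol(K_X+B+M)$ from finiteness of volumes in a bounded family (\cite[Theorem 1.12]{filipazzi2018boundedness} together with Noetherian induction), following \cite[Theorem 1.3]{filipazzi2020some}. Unless you can prove your boundedness claim independently, with only ``$\epsilon$-lc, finite coefficients, ample $K_X+B+M$, $\vol\leq v$'' as input, you should replace the first half of your argument by a reduction of this kind; the second half can then stay, but it is no longer needed once the volume has been computed on the bounded model.
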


\begin{proof}
	We follow the proof of \cite[Theorem 1.3]{filipazzi2020some}. 
	\begin{enumerate} [label=\textsl{Step} \arabic{enumi}.,wide=13pt,itemsep=13pt]
		\item Pick a generalized $\epsilon$-lc pair $(X,B+M)\in \cF_{gklt}(d,I,\leq v)$ with data $\XX\to X$ and $\MM$. Write $\MM=\sum_{i=1}^{l}\mu_i \MMi$, where $\mu_i\in I$ and $\MMi$ is Cartier nef for any $i$. By \cite[Theorem 1.2]{BVGP} and its proof, there exists a bounded set of couples $\cP$ depending only on $d,I,v$ such that there exists a log smooth couple $(\oX,\oSigma)\in \cP$ and a birational map $\pi:\oX\dasharrow X$ satisfying the following:
		\begin{itemize}
			\item $\oSigma$ contains the exceptional divisors of $\pi$ and the support of the birational transform of $B$,
			\item each $\MMi$ descends to $\oX$ as $\oM_i$, and
			\item $\oA-\sum_{i=1}^l \mu_i\oM_i$ is pseudo-effective for some very ample divisor $\oA\leq \oSigma$.
		\end{itemize}
		In particular,  $\MM$ descends to $\oX$ as $\oM=\sum_{i=1}^l \mu_i\oM_i$ and the number of $M_i$ is bounded from above.
		
		Let $\oB=(1-\epsilon)E+\tilde{B}$, where $E$ is the sum of exceptional divisors of $\pi$ and $\tilde{B}$ is the strict transform of $B$. Since $K_X+B+M$ is ample, by negativity lemma, if $D$ is a prime divisor on $X$ which is exceptional over $\oX$, then $$1\geq a(D,X,B+M)\geq a(D,\oX,\oB+\oM)\geq a(D,\oX,\oB).$$ Since $(\oX,\oB)$ is $\epsilon$-lc and belongs to a bounded family, we can extract all such $D$ and obtain a birational model $(\oX^{\prime},\oSigma^{\prime})$ which is also in a bounded family by \cite[Theorem 1.2]{FTF}, where $\oSigma^{\prime}$ is the sum of all exceptional divisors of $\oX^{\prime}\to \oX$ and the strict transform of $\oSigma$.  Replacing $(\oX,\oSigma)$ with a log bounded resolution of $(\oX^{\prime},\oSigma^{\prime})$, we can assume that $\pi^{-1}: X\dasharrow \oX$ does not contract divisors. We still denote $\oB=(1-\epsilon)E+\tilde{B}$, where $E$ is the sum of exceptional divisors of $\pi$ and $\tilde{B}$ is the strict transform of $B$.
		
		\item Applying Noetherian induction, we can assume that there is a log smooth couple $(\oV,\oGamma)$ and  a smooth projective morphism $\oV\to T$ onto a smooth variety, such that there is a closed point $t\in T$ so that we can identify $\oX$ with $\oV_t$ and $\oSigma\leq \oGamma_t$. Possibly taking a finite base change and shrinking $T$ we can assume that $(\oV, \oGamma)$ is log smooth over $T$. 
		
		Since the coefficients of $B$ are in a finite set $I$, without loss of generality, we can assume that the number of components of $B$ and their coefficients are fixed. Hence there exists a boundary $\oDelta$ on $\oV$ such that we can identify $\oDelta_t$ with $\oB$.
		
		By the argument of Step 3 and Step 4 in the proof of \cite[Theorem 1.3]{BVGP}, there exist divisors $\oN_i$ on $\oV$ such that $\oN_i|_{\oV_t}\sim_{\bQ} \oM_i$. If we write $\oN=\sum_{i=1}^l \mu_i \oN_i$, then $\oN|_{V_t}\sim_{\bR}\oM$. Since $\mu_i$'s belong to a finite set $I$, without loss of generality, we can assume that $\mu_i$'s are all fixed. Therefore, we regard $\oN$ as a fixed divisor on $\oV$.
		
		\item Applying \cite[Theorem 1.12]{filipazzi2018boundedness} to $(\oV,\oDelta),\oN$, we deduce that $\vol(K_{\oX}+\oB+\oM)$ is fixed. Since $(X,B+M)$ is generalized $\epsilon$-lc, $\oB=(1-\epsilon)E+\tilde{B}$ and $X\dasharrow \oX$ does not contract divisors, we have $$\vol(K_{\oX}+\oB+\oM)=\vol(K_X+B+M).$$ Therefore, $\vol(K_X+B+M)$ is fixed and we finish the proof.
	\end{enumerate}
\end{proof}

\begin{proof}[Proof of Theorem \ref{mainthm2}]
Let $(X,B),A\in \cS_{klt}(d,\Phi,\leq u,\leq v)$ such that $(X,B)$ is $\epsilon$-lc, and let $f: X\to Z$ be the contraction induced by the semi-ample $\bR$-divisor $K_X+B$. By Theorem \ref{finite effective adjunction}, there exist finite sets $J, \Psi\subset \bR^{\geq0}$ depending only on $d,u,\epsilon,\Phi$, such that there is an adjunction formula $$K_X+B\sim_{\bR}f^*(K_Z+B_Z+M_Z)$$ satisfying the following:
	\begin{itemize}
		\item $B_Z\in \Psi$, and
		\item $M_{\ZZ}=\sum_{i=1}^l r_iM_{i,\ZZ}$ on some high resolution $\ZZ\to Z$, where $r_i\in J$ and  $M_{i,\ZZ}$ is Cartier nef for any $i$. 
	\end{itemize}
	
	Therefore, we have $$(Z,B_Z+M_Z)\in \cF_{gklt}(d,J\cup \Psi,\leq v).$$ By Lemma \ref{singularities on base}, $(Z,B_Z+M_Z)$ is generalized $\delta$-lc for some $\delta$ depending only on $d,\epsilon$. Then Lemma \ref{volume fixed} implies that $$\Ivol(K_X+B)=\vol(K_Z+B_Z+M_Z)$$ is in a finite set depending only on $d,\delta,v,J,\Psi$, hence depending only on $d,\epsilon,u,v,\Phi$. Without loss of generality, we can assume that $\Ivol(K_X+B)=v_0$ for some fixed $v_0\in \bR^{>0}$. Therefore, $(X,B),A\in \cS_{klt}(d,\Phi,\leq u, v_0)$, and the theorem follows from Theorem \ref{mainthm1}.
\end{proof}

\hfill

\providecommand{\bysame}{\leavevmode\hbox to3em{\hrulefill}\thinspace}
\providecommand{\MR}{\relax\ifhmode\unskip\space\fi MR }
\providecommand{\MRhref}[2]{%
	\href{http://www.ams.org/mathscinet-getitem?mr=#1}{#2}
}
\providecommand{\href}[2]{#2}

\end{document}